\numberwithin{figure}{section}
\tikzset{%
	symbol/.style={%
		draw=none,
		every to/.append style={%
			edge node={node [sloped, allow upside down, auto=false]{$#1$}}}
	}
}
\def\Ee{\mathcal{E}}
\def\Alg{\mathrm{Alg}}
\def\Set{\mathrm{Set}}
\def\Poly{\mathbf{Poly}}
\def\Cat{\mathrm{Cat}}
\def\Int{\mathrm{Int}}
\def\sg{\sigma}
\def\LL{\mathbb{L}}
\def\hocolim{\mathrm{hocolim}}
\def\holim{\mathrm{holim}}
\def\LL{\mathbb{L}}
\theoremstyle{plain}
\newtheorem{theorem}[subsection]{Theorem}
\newtheorem{lemma}[subsection]{Lemma}
\newtheorem{proposition}[subsection]{Proposition}
\newtheorem{defin}[subsection]{Definition}
\newtheorem{lem}[subsection]{Lemma}
\newtheorem{corol}[subsection]{Corollary}
\theoremstyle{remark}
\newtheorem{remark}[subsection]{Remark}
\newtheorem{example}[subsection]{Example}
\newcommand{\subjclass}[2][1991]{%
	\let\@oldtitle\@title%
	\gdef\@title{\@oldtitle\footnotetext{#1 \emph{Mathematics subject classification.} #2}}%
}
\newcommand{\keywords}[1]{%
	\let\@@oldtitle\@title%
	\gdef\@title{\@@oldtitle\footnotetext{\emph{Key words and phrases.} #1.}}%
}
\newcommand{\F}{\mbox{$\mathcal F$}}
\begin{document}

\title{Polynomial monads and delooping of mapping spaces}

\date{\today}

\author{
	M. A.  Batanin\thanks{Macquarie University, North Ryde, 2109 Sydney, Australia}\\
	\texttt{michael.batanin@mq.edu.au}
	\and
	F. De Leger\thanks{Macquarie University, North Ryde, 2109 Sydney, Australia}\\
	\texttt{florian.deleger@mq.edu.au}
}

\maketitle

\renewcommand{\thefootnote}{\fnsymbol{footnote}} 
\footnotetext{\emph{2010 Mathematics Subject Classification.} 18D20 , 18D50, 55P48}     
\renewcommand{\thefootnote}{\arabic{footnote}} 

\setcounter{tocdepth}{1}

\begin{abstract} We extend some classical results -- such as Quillen's Theorem A, the Grothendieck construction, Thomason's theorem and the characterisation of homotopically cofinal functors -- from the homotopy theory of small categories to polynomial monads and their algebras. 

As an application we give a  categorical proof of the Dwyer-Hess and Turchin results 
concerning the explicit double delooping of spaces of long knots. 
\end{abstract}

{\small \tableofcontents}

\part{Polynomial monads and homotopy theory}

\maketitle

\section{Introduction}

The homotopy theory of small categories  is a product of  decades of development by many prominent mathematicians; to name a few:  Quillen, Grothendieck and Thomason \cite{Gr,Q,Th}. 
More recent significant progress is the work of  Maltsiniotis and Cisinski \cite{Malt,cis06}.  
The theory provides a vital formalism for many applications in algebraic geometry, algebraic $K$-theory and algebraic topology. 

In this paper we show that some of the fundamental constructions and results from the homotopy theory of small categories are still valid in the larger context of the category of finitary polynomial monads. 
The theory of finitary polynomial monads (equivalently known as $\Sigma$-free coloured $Set$-operads) is a multivariable extension of the theory of small categories.
Indeed, a small  category $\mathrm{C}$  with the set of objects $I$ determines an endofunctor $C:\Set^I\to \Set^I,$ where $\Set^I$ is the category of $I$-indexed collections of sets:
$$C(X)(i) = \coprod_{j}\mathrm{C}(j,i)\times X(j),$$ 
where $\mathrm{C}(j,i)$ is the set of morphisms in $\mathrm{C}$ from $j$ to $i.$ It is easy to see that the functor $C$ preserves connected limits.
The category structure  of $\mathrm{C}$  amounts then to a structure of cartesian monad on the functor $C.$ 
The last conditions mean that the unit and multiplication of this monad are cartesian natural transformations; that is, all naturality squares are pullbacks.
The category of algebras for this monad is isomorphic to the category of covariant presheaves  $\Set^{\mathrm{C}}.$

Finitary polynomial monads can be defined as cartesian monads whose underlying functor is a coproduct of sets which involve multivariable summands like $B(j_1,\ldots, j_k; i)\times X(j_1)\times\ldots\times X(j_k)$ with finite{ly} many factors. 
The category of algebras of such a monad is the category of $I$-collections equipped with the operations 
$$B(j_1,\ldots, j_k; i)\times X(j_1)\times\ldots\times X(j_k)\to X(i)$$
which satisfy appropriate  associativity and unitarity conditions. One can also give the structure of an  
algebra by specifying a family of maps  
$$b: X(j_1)\times\ldots\times X(j_k)\to X(i) \ , \          b\in B(j_1,\ldots, j_k; i).$$
An important fact is: the category of algebras of a finitary polynomial monad can be defined in any symmetric monoidal category $\Ee.$ Indeed, it suffices to replace the cartesian product of sets in the definition of algebra by the tensor product of objects in $\Ee.$  

It was observed by the first author in \cite{EHBat} that the algebras of a polynomial monad in the symmetric monoidal category $(\Cat,\times, 1)$ of small categories (called {\it categorical algebras}) play a special role. 
For such algebras the theory can be internalised; that is, one can consider a kind of algebra (called internal algebra) inside a categorical algebra of a polynomial monad. 
Formally, an internal $T$-algebra inside a categorical $T$-algebra $A$ is defined as a lax $T$-algebra map from the terminal $T$-algebra $1$ to $A.$

A good example to have in mind is the category of monoids in any monoidal category. 
We consider a monoidal category as a categorical (pseudo)algebra of a finitary polynomial monad 
(the free monoid monad) 
$M$ given by the geometric series $M(X) = \coprod_{n\ge 0} X^n.$ 
It is well known then that   
a monoid in a monoidal category $A$ is the same thing as a lax-monoidal functor from the terminal monoidal category $1$ to $A$; that is, a lax $M$-algebras map $1\to A.$ 
\begin{remark} More precisely, the categorical algebras of $M$ are strict monoidal categories. 
However, this difference between strict monoidal categories and general monoidal categories 
does not play much role in our theory due to Mac Lane's coherence theorem. 
\end{remark}
A classical observation of Lawvere is that the theory of monoids is represented by the monoidal category of finite ordinals $\Delta_{alg}$ in the sense that this monoidal category is freely generated by a monoid (the terminal ordinal) inside it. This means that a monoid in a monoidal category $A$ is the same as a strict monoidal functor from $\Delta_{alg}$ to $A.$

It was shown in \cite{EHBat} that this observation of Lawvere has a far reaching generalisation:  for any cartesian map between cartesian monads $f:S\to T$,  there exists a  categorical $T$-algebra $T^S$ with a nice universal property: it is freely generated by an internal $S$-algebra. We called this algebra {\it the classifier of internal $S$-algebras inside categorical $T$-algebras}. 

The theory of classifiers provides a link between Grothendieck's homotopy theory and polynomial monads.  
For example, if $f:S\to T$ is a functor between small categories (interpreted as a map between the corresponding cartesian monads) then $T^S$ is the {covariant $\Cat$-valued presheaf 
on $T$} which associates to an object $i\in T$ the comma (or slice) category $f/i.$ 
This slice category construction is one of the main tools of the homotopy theory of small categories.  

The theory of internal algebras classifiers for polynomial monads and its applications  was developed further  by the first author and Clemens Berger in \cite{BB}. An application of this theory to the Baez and Dolan stabilization hypothesis for higher dimensional categories was found in \cite{BatStab}. It was observed that the homotopy type of the classifier for $f:S\to T$ can tell us a lot about the homotopy behaviour of the adjoint pair of functors between simplicial algebras of $T$ and $S$ induced by $f,$ very much like in the homotopy theory of small categories where the homotopy type of slices of a functor provides important information about homotopy Kan extensions along this functor.   

In this paper we take this analogy seriously and develop a formalism extending that of the homotopy theory of small categories. The main ingredients of our new formalism are 
:
\begin{enumerate}
\item An analogue of the Grothendieck construction for a polynomial monad and interpretation of the classifier construction as its left adjoint;
\item An analogue of Quillen's Theorem A for polynomial monads;
\item The analogue of the characterisation of homotopically cofinal functors in terms of preservation of homotopy limits. 
\item A generalisation of Thomason's theorem about the homotopy colimit of  nerves of a  diagram of small categories.
\end{enumerate}

It turns out that this extended formalism provides some extra flexibility which is not achievable in the classical setting of small categories.  For example, one can add constants to the theory, which turns out to be very useful in the study of homotopy mapping spaces between simplicial algebras.  

As an illustration of the power of this formalism we give a new proof of the Dwyer-Hess-Turchin result on explicit double delooping of the space of long knots \cite{DH,T}. Our work was, in fact, inspired by the paper of Turchin \cite{T}.

The space
$\overline{Emb}(\mathbb{R}^1,\mathbb{R}^m)$ 
of {\it long knots modulo immersions} is a homotopy fiber of the map 
$$Emb(\mathbb{R}^1,\mathbb{R}^m) \to Imm(\mathbb{R}^1,\mathbb{R}^m)$$
where $Emb(-,-)$ is the space of embeddings and $Imm(-,-)$ is the space of immersions with compact support (that is, it is equal to the standard embedding outside of a compact subspace).  
	
Dwyer-Hess and independently Turchin proved the following statement: for $m>3$, there is a weak equivalence of spaces
	\begin{equation}\label{DHconjecture1}
	\overline{Emb}(\mathbb{R}^1,\mathbb{R}^m) \sim \Omega^2 Map_{\mathrm{Op}} (\mathcal{D}_1,\mathcal{D}_m)	\end{equation}	
where $\mathcal{D}_k$ is a $E_k$-operad (that is, any operad homotopy equivalent to the little $k$-disks operad), $ Map_{\mathrm{Op}}(-,-)$
is the homotopy mapping space in the category of symmetric operads and $\Omega$ is the loop space functor. 
			
		Both proofs use an earlier result of Sinha \cite{Sinha} about the weak equivalence: 
	\begin{equation}\label{fiberembeddingspace}
	\overline{Emb}(\mathbb{R}^1,\mathbb{R}^m) \sim \widetilde{Tot}(\mathcal{K})
	\end{equation}
	where $\widetilde{Tot}(\mathcal{K})$ is the cosimplicial totalization of the Kontsevich operad $\mathcal{K}.$ This construction is possible because $\mathcal{K}$ is not only an $E_m$-operad but it is also multiplicative; that is, it is equipped with a map of non-symmetric operads $\mathcal{A}ss\to d_1\mathcal{K},$ where $\mathcal{A}ss$ is the terminal non-symmetric operad and $d_1(-)$ is the functor of dessymmetrisation (that is, the functor forgetting the symmetric groups actions).

In fact, Dwyer-Hess and Turchin established  the following more general delooping result:
\begin{theorem} For any multiplicative reduced non-symmetric operad $\mathcal{O}$, there are two 
weak equivalences of mapping spaces:
$$Map_{\mathrm{Bimod}}(\mathcal{A}ss,\mathcal{O}) \sim \Omega Map_{\mathrm{NOp}} (\mathcal{A}ss,\mathcal{O})$$
and 
$$Map_{\mathrm{WBimod}}(\mathcal{A}ss,\mathcal{O}) \sim \Omega Map_{\mathrm{Bimod}} (\mathcal{A}ss,\mathcal{O}),$$	
where mapping spaces are taken in the model categories of non-symmetric operads $\mathrm{NOp}$, $\mathcal{A}ss$-bimodules $\mathrm{Bimod}$ and weak $\mathcal{A}ss$-bimodules $\mathrm{WBimod}.$  Reducedness means that $\mathcal{O}_0$  and  $\mathcal{O}_1$ are both contractible spaces. \end{theorem}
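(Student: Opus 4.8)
\emph{Setup.} The plan is to present the three model categories in the statement as categories of algebras of finitary polynomial monads, to rephrase both displayed equivalences as a single statement (applied at two consecutive levels) about the derived mapping space out of the terminal algebra, and then to deduce it from the results recalled above by an explicit homotopy analysis of the associated classifiers. Concretely, non-symmetric operads, $\mathcal{A}ss$-bimodules and weak $\mathcal{A}ss$-bimodules are the algebras of finitary polynomial monads whose operations are indexed, respectively, by planar rooted trees, by planar rooted trees with a horizontal section, and by planar rooted trees with an infinitesimal section; restricting to reduced and multiplicative objects -- fixing the contractible arity $0$ and $1$ components and adjoining a map from $\mathcal{A}ss$ -- amounts to adding two constants, and produces polynomial monads $\mathtt{T}_{\mathrm{WBimod}}\to\mathtt{T}_{\mathrm{Bimod}}\to\mathtt{T}_{\mathrm{NOp}}$ and morphisms between them inducing the forgetful functors as restriction functors. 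In each case $\mathcal{A}ss$ is the terminal algebra but not a cofibrant one, so that $Map_{\mathrm{NOp}}$, $Map_{\mathrm{Bimod}}$ and $Map_{\mathrm{WBimod}}$ in the statement are exactly the derived mapping spaces out of the terminal algebra.

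\emph{Mapping spaces as homotopy limits over classifiers.} By the polynomial-monad Grothendieck construction together with the interpretation of the classifier as its left adjoint, the derived mapping space out of the terminal $\mathtt{T}$-algebra into $\mathcal{O}$ is the homotopy limit of $\mathcal{O}$ over the nerve of the internal-$\mathtt{T}$-algebra classifier; for our three monads these classifiers are the tree categories above, and each morphism of monads induces the evident ``forget the section'' functor of classifiers. Using the polynomial-monad form of Quillen's Theorem A and the characterisation of homotopically cofinal functors, one checks that the inclusion of the full subcategory of trees with no vertices of arity $0$ or $1$ is homotopy cofinal; this is the only place where reducedness enters, the contractibility of $\mathcal{O}_0$ and $\mathcal{O}_1$ being exactly what makes the relevant comma categories contractible. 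After this reduction each of the three mapping spaces is a homotopy limit of $\mathcal{O}$ over a small, tractable category of trees.

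\emph{The delooping.} The key point is that the classifier of $\mathtt{T}_{\mathrm{NOp}}$ is, up to homotopy, a bar construction built from the classifier of $\mathtt{T}_{\mathrm{Bimod}}$ for its grafting structure: an operadic tree is a composite of bimodule trees, which exhibits the $\mathtt{T}_{\mathrm{NOp}}$-classifier as the homotopy colimit over $\Delta^{\op}$ of a simplicial diagram of classifiers built out of the $\mathtt{T}_{\mathrm{Bimod}}$-classifier. By the generalised Thomason theorem this homotopy colimit is computed on nerves; combining with the previous step identifies $Map_{\mathrm{NOp}}(\mathcal{A}ss,\mathcal{O})$ with a classifying space (bar construction) of $Map_{\mathrm{Bimod}}(\mathcal{A}ss,\mathcal{O})$ for its natural multiplication coming from operadic composition in $\mathcal{O}$. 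Reducedness forces this multiplication to be group-like, whence $Map_{\mathrm{Bimod}}(\mathcal{A}ss,\mathcal{O})\simeq\Omega\,Map_{\mathrm{NOp}}(\mathcal{A}ss,\mathcal{O})$. The second equivalence is the same argument applied one level down to $\mathtt{T}_{\mathrm{WBimod}}\to\mathtt{T}_{\mathrm{Bimod}}$, a sectioned tree being assembled out of infinitesimal ones.

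\emph{Main obstacle.} The difficulty is concentrated in the delooping step: producing the precise simplicial decomposition of the sectioned-tree categories that exhibits one classifier as the bar construction of the next, checking that the generalised Thomason theorem applies, and verifying group-likeness of the resulting multiplication under the reducedness hypothesis. The combinatorial bookkeeping of sections together with the arity $0$ and $1$ vertices, and ensuring that the operation of adding constants is compatible with the classifier and Grothendieck constructions, is the delicate technical part; once it is in place, the theorem follows formally from the results recalled above.
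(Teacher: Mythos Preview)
Your proposal identifies the right setting and several correct ingredients, but the argument has a genuine gap at exactly the point you flag as ``the delicate technical part''. The delooping step is the whole content of the theorem, and you have not carried it out: the assertion that the $NOp$-classifier is, up to homotopy, a bar construction on the $Bimod$-classifier (and likewise one level down) is not justified, nor is it clear what the precise simplicial object would be, nor why the resulting monoid structure on $Map_{\mathrm{Bimod}}(\mathcal{A}ss,\mathcal{O})$ is group-like from the hypothesis $\mathcal{O}_0\simeq\mathcal{O}_1\simeq *$. ``An operadic tree is a composite of bimodule trees'' is suggestive but does not by itself produce a functor $\Delta^{\op}\to\mathbf{Cat}$ whose homotopy colimit is the $NOp$-classifier. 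There is also a confusion in your second paragraph: homotopy cofinality of a functor between classifiers is a statement about comma categories of trees and cannot depend on $\mathcal{O}$; what you presumably mean is that for this particular diagram the homotopy limit does not change after restriction because the values of $\mathcal{O}$ at arity $0$ and $1$ vertices are contractible, which is a different (and weaker) claim.

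The paper's route is genuinely different and avoids the bar-construction combinatorics entirely. Rather than exhibiting one mapping space as the classifying space of another, it proves two formal theorems. The \emph{Formal Delooping Theorem} says: if the pushout square $T\to T_*\rightrightarrows T_{**}$ of a polynomial monad with its pointed and double-pointed versions is homotopically cofinal, then for any pointed $T$-algebra $X$ one has $\Omega\,Map_{\Alg_T}(1,X)\simeq Map_{\Alg_{T_{**}}}(1,X)$; the loop space appears because in $\Alg_{T_*}$ the terminal object is also initial, so two corners of the induced homotopy pullback are contractible. The \emph{Formal Fibration Sequence Theorem} produces, from cofinality of the square $Id\to T\to T_+$, a fibration sequence $Map_{\Alg_{T_+}}(1,X)\to Map_{\Alg_T}(1,X)\to fib(X_i)$. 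The substantive work is then purely combinatorial: proving that the maps $Bimod_+\to NOp_{**}$ and $WBimod_+\to Bimod_{**}$ are homotopically cofinal (contractibility of explicit tree classifiers, via a smooth/proper functor argument), and that the two squares above are homotopically cofinal. Reducedness enters only at the very end, to kill the bases $fib(\mathcal{O}_1)$ and $fib(\mathcal{O}_0)$ of the two fibration sequences. This packaging replaces your unproven bar-construction step by concrete contractibility statements about finite posets of decorated planar trees, each of which is checked directly.
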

This theorem is applicable to delooping the space of long knots because (as several people observed) the category $\mathrm{WBimod}$ is isomorphic to the category of cosimplicial objects and, hence, the (homotopy) cosimplicial totalization functor is the homotopy mapping space from the terminal weak bimodule.     
		
	The proofs of this theorem by Dwyer-Hess and Turchin are both based on homotopy theory {but of different flavours. Turchin uses some very explicit cofibrant resolutions for operads, bimodules and weak bimodules and then constructs all necessary higher homotopies by hand. Dwyer and Hess use model theoretical argument related to moduli spaces (in fact they prove a more general statement about relations between homotopy mapping spaces of monoids and bimodules). Unfortunately, both proofs are very technical and do not provide a clear conceptual explanation of the result. Consequently both proofs are hard to generalise to other situations where we want to study the delooping of mapping spaces; for example, for delooping the higher dimensional spaces of embeddings.

We approach this question by observing that all three categories  $\mathrm{NOp}$, $\mathrm{Bimod}$ and $\mathrm{WBimod}$	are categories of algebras of appropriate polynomial monads. The mapping spaces like $Map_{\mathrm{NOp}} (\mathcal{A}ss,\mathcal{O})$ are then the `derived versions' of the category of internal algebras. So the statement of the theorem can be conceptually understood in the setting of internal algebras. 

For example, the first delooping statement can be understood at the outset through the following `baby' case.  Suppose we are given two maps of non-symmetric operads $\mathcal{A}ss\to \mathcal{O}$ in $\Cat.$  One can then construct an $\mathcal{A}ss$-bimodule out of $\mathcal{O}$ using the first map to define a left action of $\mathcal{A}ss$  and the second map to define a right action. 
 Now, suppose that $\mathcal{O}$ is a groupoid in each degree and $\mathcal{O}_1$ is a contractible groupoid.   Then one can prove by hand that the process above has an inverse; that is, any bimodule structure on the collection of categories $\mathcal{O}$ is obtained from two maps of operads $\mathcal{A}ss\to \mathcal{O}.$  
In general, Turchin-Dwyer-Hess delooping is essentially the above statement where $\mathcal{O}$ is now an operad in $\omega$-groupoids. 
Of course, in this case the inverse functor reconstructs two operadic maps as well as an operadic structure on $\mathcal{O}$, but only up to higher homotopies. 
In our formalism this statement is equivalent to the statement that the map of polynomial monads $$Bimod_+\to NOp_{**}$$ is homotopically cofinal (Theorem \ref{bullet}).  
Here, $Bimod_+$ is the polynomial monad for $\mathcal{A}ss$-bimodules with an additional distinguished point in the degree $1$ component and $NOp_{**}$ is the polynomial monad for double multiplicative operads; that is, non-symmetric operads equipped with two maps from $\mathcal{A}ss.$ 

We believe that the possibility of using the above kind of reasoning to conceptually understand a problem, and then apply formal high level homotopy theory language to finish the proof, illustrates an important powerful feature of our approach.  
We expect the technique to be very useful in future applications. 	 
	
Here is the plan of the paper. Part 1 is devoted to the formalism of the homotopy theory of polynomial monads. We define the category of Internal algebras of polynomial monads in Section 3. We then show that there is an analogue of the Grothendieck construction for polynomial monads which produce a polynomial monad map out of a categorical algebra of a polynomial monad. 
We then interpret the category of internal algebras as a category of sections of Grothendieck constructions. In this sense the category of internal algebras can be thought as the nonabelian cohomology of the polynomial monad. The category of relative internal algebras also admits an interpretations in terms of a category of liftings. 
In Section 3 we recall the definition of internal algebra classifiers and their construction in terms of  a codescent object. 
Our new result here is that the classifier construction is the left $2$-adjoint to the Grothendieck construction. 	

In Section 5 we relate internal algebra classifiers with the homotopy theory of simplicial algebras over a polynomial monad. Here we prove Quillen's Theorem A for polynomial monads. 
We also introduce the notion of homotopically cofinal maps between polynomial monads and show that these maps can be characterised in terms of maps between mapping spaces between algebras very much as homotopically cofinal functor can be characterised as functors restriction along which preserves homotopy colimits. 

In Section 6 we develop yet another version of Grothendieck construction which we call twisted Boardman-Vogt tensor product. It is interesting that these two versions of Grothendieck constructions coincide in the $2$-category of small categories.  We prove then a generalisation to polynomial monads of Thomason's theorem about the colimit of the diagram of nerves of small categories. In Section 7 we apply this criteria to introduce a notion of homotopically cofinal square of polynomial monads.  These are exactly the commutative squares of polynomial monads which induce the homotopy pushouts of nerves of classifiers over any fixed polynomial monad.

In Section 8 we prove some useful results about formal delooping of homotopy mapping spaces between pointed algebras of polynomial monads. These results will play main role in our approach to the proof of the Dwyer-Hess-Turchin theorems in Part 2 of our paper. 
We must add that most of our results about homotopy mapping spaces can be proved for (semi) model categories of algebras of polynomial monads in a monoidal model category satisfying some very moderate assumptions. We do not do it in this paper just because it would make the proofs more technical and longer at the expense of clarity of exposition of the main new ideas. 

In Section 9 we briefly remind the reader about multiplicative operads, bimodules and weak bimodules, and show that there are polynomial monads for all these categories.     	

In Section 10 we prove a result about homotopy cofinality of a certain map between polynomial monads (Theorem \ref{bullet}). This result is a vast generalisation of the Dwyer-Hess-Turchin result where we compare mapping spaces between cospans of operads and bimodules over different operads. This is exactly this theorem which provides a conceptual explanation of the existence of Dwyer-Hess and Turchin delooping.  
An analogous result holds for mapping spaces between cospans of bimodules and two sided weak bimodules. 

In Section 11 we show how cofinality of the maps  $Bimod_+\to NOp_{**}$ and 
$WBimod_+\to Bimod_{**}$ follows from Theorem \ref{bullet}. Finally, in Section 12 we provide a proof of the Dwyer-Hess-Turchin theorem which is now a relatively simple consequence of our formal delooping theorems and the second cofinality Theorem \ref{cofinalmaps}.

{\it \underline{Latest development}.}	{A significant progress in explicit delooping of the embedding spaces  was made recently in \cite{Boavida,D,DCT}. The approach of Boavida and Weiss \cite{Boavida} is more topological, whereas our approach is categorical and combinatorial and is closer to the Dwyer-Hess  and Ducoulombier-Turchin approaches in \cite{D,DCT}. As it is noticed in \cite{Boavida} 
: `. . . the Dwyer-Hess result is a theorem about fairly general operads and as such it has a different scope and applicability from our result'. 
There are, however, very interesting connections between all these approaches and we are going to address it in a future paper.}  

\section{Polynomial monads}

\begin{defin}[\cite{KJBM,GK,BB}] A finitary polynomial $P$ is a diagram in $\Set$ of the form 
\[
			\xymatrix{
				J && E \ar[ll]_-s \ar[rr]^-p && B \ar[rr]^-t && I
			}
			\]
where $p^{-1}(b)$ is a finite set for any $b\in B.$ \end{defin} 
Each polynomial $P$ generates a functor called {\it polynomial functor} between functor categories $$\underline{P}:\Set^J \to \Set^I$$ which is defined as the composite functor
\[
			\xymatrix{
				\Set^J \ar[rr]^-{s^*}&& \Set^E  \ar[rr]^-{p_*} && \Set^B \ar[rr]^-{t_!} && \Set^I
			}
			\]
where we consider the sets $J,E,B, I$ as discrete categories and $s^*$ is the restriction functor and $p_*$ and $t_!$ are the right and left Kan extensions correspondingly.
Explicitly the functor $\underline{P}$ is given by the formula
\begin{equation}\label{PPP}  \underline{P}(X)_i = \coprod_{b\in t^{-1}(i)} \prod_{e\in p^{-1}(b)} X_{s(e)}, \end{equation}
which explains the name `polynomial': its values are sums of products of formal variables.

{\it A cartesian morphism between polynomial functors} is a natural transformation between the functors such that each naturality square is a pullback. 
One can prove that such a cartesian morphism is determined by a commutative diagram in $\Set$  
\[
			\xymatrix{
				J' \ar[dd] && E' 
				 \ar[ll]_-{s'} \ar[rr]^-{p'} \ar[dd] && B' \ar[rr]^-{t'} \ar[dd] && I' \ar[dd] \\
				\\
				J&& E \ar[ll]_-s \ar[rr]^-p && B \ar[rr]^-t && I
			}
			\]
			such that  the middle square is a pullback.
 
Composition of finitary polynomial functors is again a finitary polynomial functor. 
{Sets, finitary polynomial functors and their cartesian morphisms form a $2$-category  $\Poly$.} 
\begin{defin}A finitary polynomial monad is a monad in the $2$-category $\Poly.$ \end{defin}
\begin{remark} Finitary polynomial functors preserve filtered colimits and pullbacks. 
Polynomial monads are cartesian;  that is, their underlying functors preserve pullbacks and their units and multiplications are cartesian natural transformations. 
\end{remark}
\begin{remark} One can consider more general polynomial functors of non-finitary type. Since in this paper we shall not need these more general functors, we use the term polynomial monad for finitary polynomial monad.
\end{remark}
For a polynomial monad { $T$ }
 \begin{equation}\label{polymonad}
			\xymatrix{
				I && E \ar[ll]_-s \ar[rr]^-p && B \ar[rr]^-t && I
			}
\end{equation} 
we will call the set $I$ {\it the set of colours of $T$}, the set $B$ {\it the set of operations of $T$}, the set $E$ {\it the set of marked operations of $T$}, the map $t$ {\it the target map} and the map $s$ {\it the source map}. The map $p$ will be called {\it the middle map of $T$}. 

Explicitly, the structure of polynomial monad is given by a family of {  elements (units)} $1_i\in B$  for all $i\in I$ such that $t(1_i) = i$, $s(p^{-1}(1_i)) = \{i\},$ and a composite 
$\mu_T(b;b_1,\ldots,b_k)$ for each $b\in B,$ and each list of elements $b_1,\ldots.b_k\in B$ together with a bijection  $\eta:\{1,\ldots,k\} \to s(p^{-1}(b))$  such that $t(b_m) = \eta(m).$ 
These data should satisfy unitarity, associativity and equivariancy conditions. 
Polynomial monads and their cartesian maps form a category $\mathrm{PMon}.$

\begin{example} One can consider a small category $C$ with the set of objects $I$ and set of morphisms $B$ as a polynomial monad 
\[
			\xymatrix{
				I && B \ar[ll]_-s \ar[rr]^-{id} && B \ar[rr]^-t && I
			}
			\]
where $s$ and $t$ are the usual source and target maps. This gives us a  full embedding of categories $\mathrm{Cat}\to \mathrm{PMon}.$ This embedding has a right adjoint  which for a polynomial monad $T$ returns its submonad of unary operations.    						
\end{example}
\begin{example}[\cite{BB}] The { free} monoid monad is a polynomial monad represented by the diagram
\[
			\xymatrix{
				1 && Ltr^{*} \ar[ll]_-s \ar[rr]^-{p} && Ltr \ar[rr]^-t && 1
			}
			\] 
where $Ltr$ is the set of isomorphism classes of linear trees (or equally the set of all ordinals $\{0< 1<\ldots<n\}$), the set $Ltr^*$ is the set of linear trees with one vertex marked (equivalently the set  of all ordinals $\{0< 1<\ldots<k^*< \ldots<n\}$), the set of colours is the one object set. The middle map forgets the marking. The multiplication in the monad is generated by insertion of a linear tree to the marked vertex of another tree.

\end{example}

\begin{example} \label{example3}  Recall that a \emph{non-symmetric operad} $\mathcal{O}$ in a symmetric monoidal category $(\Ee,\otimes,e)$ is given by
		\begin{itemize}
			\item an object $\mathcal{O}_n$ in $\Ee$ for all integers $n \geq 0$
			\item a morphism $\epsilon: e \to \mathcal{O}_1$ called unit
			\item morphisms
			\[
			m: \mathcal{O}_k \otimes \mathcal{O}_{n_1} \otimes \ldots \otimes \mathcal{O}_{n_k} \to \mathcal{O}_{n_1 + \ldots + n_k}
			\]
			called multiplication
		\end{itemize}
		such that the usual associativity and unitarity conditions are satisfied.  	
		
The polynomial monad $NOp$ for non-symmetric operads was described in {\cite{BB,KJBM}}. The corresponding polynomial is :
 \[
			\xymatrix{
				\mathbb{N} && Ptr^{*} \ar[ll]_-s \ar[rr]^-{p} && Ptr \ar[rr]^-t && \mathbb{N}
			}
			\] 
Here, $Ptr, Ptr^*$ are the sets of isomorphism classes of planar trees, and planar trees with a marked vertex respectively. The middle map forgets the marked point, the source map is given by the number of incoming edges for the marked point and the target map associates to a tree its number of leaves.   			
The multiplication in this monad is generated by insertion of a tree inside a marked point. 
\end{example}

Let $\Ee$ be a cocomplete symmetric monoidal category and $T$ be a polynomial functor. One can construct a functor $\underline{T}^{\mathcal{E}}:\Ee^I \to \Ee^I$ given by a formula similar to (\ref{PPP}):
$$ \underline{T}^{\Ee}(X)_i = \coprod_{b\in t^{-1}(i)} \bigotimes_{e\in p^{-1}(b)} X_{s(e)}.$$
If $I = J$ and $T$ was given the structure of a polynomial monad then $\underline{T}^{\Ee}$ would acquire a structure of monad on $\Ee^I.$ This last category will be called the category of $I$-collections in $\Ee.$ The category of $\Set$-collections often will be called simply the category of $I$-collections and the category of $\Cat$-collection will be called the category of categorical $I$-collections. 
\begin{defin} The category of algebras of a polynomial monad $T$ 
in a cocomplete symmetric monoidal category $\Ee$ is the category of algebras of the monad $\underline{T}^{\Ee}.$
\end{defin} 

Explicitly, an $\Ee$-algebra $A$ of a polynomial monad $T$ is given by a collection $A_i\in\Ee,i\in I,$ equipped with the following structure maps:
$$m_{(b,\sigma)}: A_{s(\sigma(1))}\otimes\ldots\otimes A_{s(\sigma(k))} \to A_{t(b)}$$
for each $b\in B,$ and each bijection $\sigma:\{1,\ldots,k\} \to p^{-1}(b)$ which satisfy some appropriate associativity, unitarity and the following equivariancy condition \cite{BB}. If $\sigma':\{1,\ldots,k\} \to p^{-1}(b)$
is a bijection then the following triangle commutes:

{ \[
	\xymatrix{
		A_{s(\sigma(1))}\otimes\ldots\otimes A_{s(\sigma(k))} \ar[rr]^\tau \ar[rd]_{m_{(b,\sigma)}} && A_{s(\sigma'(1))}\otimes\ldots\otimes A_{s(\sigma'(k))} \ar[ld]^{m_{(b,\sigma')}} \\
		& A_{t(b)}
	}
\]}
where $\tau$ is the action of the permutation $(\sigma')^{-1}\circ \sigma.$

\begin{remark} The presence of the linear ordering of $p^{-1}(b)$ in the formula above is necessary to fix an order of tensor products. The equivariancy condition assures that the structure maps do not depend on the ordering \cite{BB}. This is closely related to the fact that the category of polynomial monads is equivalent to the category of $\Sigma$-free symmetric operads.
The category of algebras of a polynomial monad is isomorphic to the category of algebras of the corresponding $\Sigma$-free operad \cite{BB}.   
\end{remark}


\section{Internal algebras and  Grothendieck construction} 
 
 Algebras of a polynomial monad $T$ in the symmetric monoidal category of small categories $(\Cat,\times , 1)$ will play a special role. We will call them {\it categorical algebras} of $T.$  The category of categorical algebras of $T$ is isomorphic to the category of internal categories in the category of $T$-algebras of $T$ (in $\Set$).   
 The category of categorical $T$-algebras is naturally a $2$-category. We will use this fact but preserve the notation $\Alg_T(\Cat)$ for this $2$-category. 
 
 A terminal internal category has a unique $T$-algebra structure for any polynomial monad $T$; the latter promotes it to a terminal categorical  $T$-algebra. From now on {\it all terminal objects will be denoted $1$} hoping that this will cause no confusion.
 
 The following definitions are taken from \cite{EHBat} and \cite{BB}.
 \begin{defin}\label{intalg1}Let $A$ be a categorical $T$-algebra for a polynomial monad $T$.

An \emph{internal $T$-algebra in $A$} is a \emph{lax morphism} of categorical $T$-algebras from the \emph{terminal} categorical $T$-algebra to $A$.

Internal $T$-algebras in $A$ and $T$-natural transformations form a category $\Int_T(A)$ and this construction extends to a $2$-functor $\Int_T:\Alg_T(\Cat)\to\Cat.$\end{defin}
 
An internal $T$-algebra in a categorical $T$-algebra $A$ can be explicitly given by a collection of objects $a_i\in A_i$ together with a morphism$$\mu_{(b,\sg)}:m_{(b,\sg)}(a_{s(\sigma(1))},\ldots, a_{s(\sigma(k))})\rightarrow a_{t(b)},$$ for each operation $(b,\sigma),$ which satisfies obvious associativity, unitarity and equivariancy conditions. Here, $m_{(b,\sigma)}$ is the structure functor of $A.$

Given a cartesian map of polynomial monads $f: S \to T$ we have a restriction $2$-functor $f^*: \Alg_T(\Cat)\to \Alg_S(\Cat).$

 \begin{defin}\label{intalg2}Let $A$ be a categorical $T$-algebra for a polynomial monad $T$.

An \emph{internal $S$-algebra in $A$} is a \emph{lax morphism} of categorical $S$-algebras from the \emph{terminal} categorical $S$-algebra to $f^*(A)$.

Internal $S$-algebras in $A$ and $S$-natural transformations form a category $\Int_S(A)$ and this construction extends to a $2$-functor 
\begin{equation}\label{Int} \Int_S:\Alg_T(\Cat)\to\Cat.\end{equation} \end{defin}

 We will also need the following generalisation of the classical Grothendieck construction for categories to polynomial monads. 
 Let $T$ be a polynomial monad and let $A$ be a categorical algebra for it. 
 We construct a new polynomial monad 
$\int A$ as follows. 
The set of colours of $\int A$ is the set of pairs $(i,a)$ where $a$ is an object of the category $A_i.$  
An operation consists of:  
\begin{enumerate} 
\item An element 
 $b\in B;$ 
 \item For each element $e\in p^{-1}(b)$ an object $a_e\in A_{s(e)};$
 \item An object $y\in A_{t(b)};$
 \item A morphism $f_{(b,\sigma)}: m_{(b,\sigma)}(a_{{\sigma(1)}},\ldots,a_{\sigma(k)})\to  y$ in $A_{t(b)}$ for each $\sigma:\{1,\ldots,k\}\to p^{-1}(b),$ which satisfies the following equivariancy condition.
 If $\sigma':\{1,\ldots,k\}\to p^{-1}(b),$ and $\tau(a_{{\sigma(1)}},\ldots,a_{\sigma(k)}) = (a_{{\sigma'(1)}},\ldots,a_{\sigma'(k)})$ then 
 $$f_{(b,\sigma)} = f_{(b,\sigma')}.$$  
 \end{enumerate} 
Obviously, to specify an operation it suffices to know the morphism $f_{(b,\sigma)}.$ 
A   marked operation in $\int A$ is an  operation in which one of the elements $e\in p^{-1}(b)$ is marked. As usual, the middle map forgets about marking. 
The target map of the monad $\int A$ is the pair $(t(b),y)$ and the source map is $(s(e),a_e)$ where $e$ is the  marked element. 
 
To describe composition suppose we are given a list of  operations $ f_{(b_1,\sigma_1)},\ldots, f_{(b_k,\sigma_k)}$  with targets $(y_1,\ldots,y_k)\in A_{t(b_1)}\times\ldots\times A_{t(b_k)}$ in $\int A$ and an operation $g_{(c,\sigma)}$ with compatible sources.  Due to the equivariancy condition, we always can choose $\sigma$ in a way that these compatibility condition mean that $m_{(c.\sigma)}(y_{\sigma(1)},\ldots,y_{\sigma(k)})$ is the source of $g_{(c,\sigma)}.$ Hence, we define the composite operation in $\int A$ as the operation $h_{(d,\pi)}$ where 
 $d$ is an operation in $T$ obtained as a composite of $c$ and $b_1,\ldots,b_k,$ the underlying morphism is   the composite of two morphisms
 $$g_{(c,\sigma)}\circ m_{(c,\sigma)}( f_{(b_1,\sigma_1)},\ldots, f_{(b_k,\sigma_k)}),$$
  and 
  $$\pi = \sigma\circ (\sigma_1\times \ldots \times \sigma_k).$$
  
  The unit of the monad $\int A$ sends an operation in the identity monad $(i,a)$ to the operation $id_{(e_i,1)}$ where $e_i = \eta(i) \in B$  and $1$ is the unique function from $1\to p^{-1}(e_i).$  
  
 The polynomial monad $\int A$ comes equipped with a cartesian map of monads $\Gamma: \int A \to T.$  A section of $\Gamma$ is a map of polynomial monads $T\to \int A$ such that its composite with $\Gamma$ is the identity.  It is quite obvious that there is a bijection between sections and internal $T$-algebras in $A.$  
 
 We need to enhance this bijection to a functor. For this we first extend the category $\mathrm{PMon}$ to a $2$-category $\mathbf{PMon}.$ Let $f,f':S\to T$ be two cartesian maps between polynomial monads given by the diagram 

\[
			\xymatrix{
				J \ar@<-.5ex>[dd]_{\phi} \ar@<.5ex>[dd]^{\phi'} && D 
				 \ar[ll]_-{v} \ar[rr]^-{q} \ar@<-.5ex>[dd]_{\pi} \ar@<.5ex>[dd]^{\pi'} && C \ar[rr]^-{u} \ar@<-.5ex>[dd]_{\psi} \ar@<.5ex>[dd]^{\psi'} && J \ar@<-.5ex>[dd]_{\phi} \ar@<.5ex>[dd]^{\phi'} \\
				\\
				I&& E \ar[ll]_-s \ar[rr]^-p && B \ar[rr]^-t && I
			}
			\]
			
A natural transformation $\xi:f\to f'$ consists of map $\sigma:J\to B,$ such that for any $j\in J$ the set $p^{-1}(\sigma(j))$ has only one element,
$t(\sigma(j)) = \phi'(j),$ $s (p^{-1}(\sigma(j)))= \phi(j)$, and for any $c\in C$ there is an equality   
$$\mu_T(\sigma(u(c));\psi(c)) = \mu_T(\psi'(c);\sigma(v(c_1)),\ldots,\sigma(v(c_k))),$$ where $\{c_1,\ldots,c_k\}=q^{-1}(c).$
If $f$ and $f'$ are functors between small categories this definition amounts to the definition of a natural transformation between $f$ and $f'.$

 \begin{proposition}\label{sectionP} The category of internal $T$-algebras in $A$ is isomorphic to the category of sections of $\Gamma$ and natural transformations between them such that their composite with $\Gamma$ is the identity natural transformation of the identity map of $T.$  
 \end{proposition}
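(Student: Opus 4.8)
The plan is to make the already-observed bijection between internal $T$-algebras in $A$ and sections of $\Gamma:\int A\to T$ into an isomorphism of categories by identifying how morphisms on each side correspond. First I would recall the explicit description of an internal $T$-algebra in $A$: it is a collection of objects $a_i\in A_i$ together with morphisms $\mu_{(b,\sigma)}:m_{(b,\sigma)}(a_{s(\sigma(1))},\ldots,a_{s(\sigma(k))})\to a_{t(b)}$ in $A_{t(b)}$ satisfying associativity, unitarity and equivariancy. Given such data, the section $F:T\to\int A$ sends a colour $i$ to the pair $(i,a_i)$ and an operation $b$ to the operation of $\int A$ determined by the morphism $f_{(b,\sigma)}:=\mu_{(b,\sigma)}$ (with $a_e=a_{s(e)}$, $y=a_{t(b)}$). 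I would check that the equivariancy condition on $\mu$ is exactly the equivariancy condition $f_{(b,\sigma)}=f_{(b,\sigma')}$ required of operations in $\int A$, that the associativity/unitarity conditions on $\mu$ are precisely the conditions that $F$ respects composition and units in the monad $\int A$, and that $\Gamma F=\mathrm{id}_T$ by construction. Conversely, a section $F$ of $\Gamma$ must send $i$ to a pair $(i,a_i)$ (since $\Gamma F=\mathrm{id}$) and an operation $b$ to an operation of $\int A$ lying over $b$, which unpacks to exactly a choice of morphisms $f_{(b,\sigma)}$ as above; monad-map axioms for $F$ give back the internal-algebra axioms. This establishes the bijection on objects.

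The main new content is the identification on morphisms. I would take two internal $T$-algebras $(a_i,\mu)$ and $(a'_i,\mu')$ with corresponding sections $F,F'$, and show that a $T$-natural transformation between them in the sense of Definition \ref{intalg1} corresponds precisely to a $2$-cell $\xi:F\to F'$ in $\mathbf{PMon}$ whose whiskering with $\Gamma$ is the identity $2$-cell on $\mathrm{id}_T$. Unwinding the definition of a $2$-cell in $\mathbf{PMon}$ given just before the statement: it consists of a map $\sigma$ from the colour set $I$ of $T$ to the operation set of $\int A$ such that each $p^{-1}(\sigma(i))$ is a singleton, with target $F'(i)=(i,a'_i)$ and source $F(i)=(i,a_i)$ — i.e.\ a choice, for each $i$, of a unary operation of $\int A$ from $(i,a_i)$ to $(i,a'_i)$, which is exactly a morphism $\alpha_i:a_i\to a'_i$ in $A_i$. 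The condition that $\Gamma$-whiskering is the identity forces $\sigma(i)$ to lie over the unit $1_i\in B$, which is precisely what restricts these unary operations of $\int A$ to honest morphisms in the fibre $A_i$ rather than something involving a nontrivial operation of $T$. Finally, the compatibility equation $\mu_T(\sigma(u(c));\psi(c))=\mu_T(\psi'(c);\sigma(v(c_1)),\ldots,\sigma(v(c_k)))$, transported through the composition law of $\int A$, becomes exactly the naturality squares $\alpha_{t(b)}\circ\mu_{(b,\sigma)}=\mu'_{(b,\sigma)}\circ m_{(b,\sigma)}(\alpha_{s(\sigma(1))},\ldots,\alpha_{s(\sigma(k))})$ defining a $T$-natural transformation. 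One then checks that this assignment is functorial (respects vertical composition of $2$-cells / natural transformations and identities), which is routine once the dictionary is set up.

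I expect the main obstacle to be bookkeeping: carefully matching the combinatorial data in the definition of a $2$-cell in $\mathbf{PMon}$ (the map $\sigma$ and its source/target/compatibility conditions, stated in terms of the polynomial diagram of $\int A$) against the elementary notion of a family of fibrewise morphisms satisfying naturality, and in particular verifying that the ``$p^{-1}(\sigma(i))$ is a singleton'' and ``$\Gamma$-whiskering is trivial'' clauses together pin the $\sigma(i)$ down to be exactly morphisms of $A_i$ and nothing more. The composition-compatibility identity in $\int A$ has to be expanded using the explicit formula for $\mu_{\int A}$ given above (composite underlying morphism $g_{(c,\sigma)}\circ m_{(c,\sigma)}(f_{(b_1,\sigma_1)},\dots)$ and permutation $\pi=\sigma\circ(\sigma_1\times\cdots\times\sigma_k)$), and I would present that expansion in a single diagram in $A_{t(c)}$ to keep it readable. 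No deep idea is needed — the content is that the Grothendieck construction $\int A$ has been defined so that all of this matches on the nose — but the translation between the two bureaucracies is where care is required.
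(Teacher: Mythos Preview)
Your proposal is correct and is precisely what the paper intends: the paper's own proof reads in its entirety ``By direct calculations,'' and your outline is exactly those calculations spelled out --- the bijection on objects via the section sending $i\mapsto(i,a_i)$ and $b\mapsto\mu_{(b,\sigma)}$, and the identification of $2$-cells over $T$ with fibrewise morphisms $\alpha_i$ satisfying the naturality squares. There is no difference in approach; you have simply made explicit what the authors left to the reader.
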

 
 \begin{proof} By direct calculations. 
  \end{proof} 
  
  \begin{remark} If $T$ is a polynomial monad with an identity middle map (that is a small category) our polynomial Grothendick construction coincides with the classical Grothendieck construction of a functor $A:T\to \Cat.$ The category of internal algebras $\Int_T(A)$ is, therefore, the lax-limit of this functor and $\int A$ is its lax-colimit.
  
  \end{remark}
  
Let $\mathbf{PMon}/{R}$ be the $2$-category of polynomial monads over $R.$ The objects of this $2$-category are cartesian polynomial monad morphisms:
{$$g:T\to R,$$}
the morphisms are commutative triangles:

{
\[
	\xymatrix{
		S \ar[rr]^f \ar[rd]_h && T \ar[ld]^g \\
		&  R
	}
\]}
and $2$-morphisms are natural transformations $f\to f'$ such that the whiskering with $g$ is an identity transformation of $h.$ 
With this notation, the category of sections above is the hom-category $(\mathbf{PMon}/{{ T}})(T,\int A).$

  Now let $f:S\to T$ be a cartesian map of polynomial monads given by a commutative diagram:
\begin{equation}\label{injective}
			\xymatrix{
				J\ar[dd]_{\phi} && D 
				 \ar[ll]_-{v} \ar[rr]^-{q} \ar[dd]_{\pi} && C \ar[rr]^-{u} \ar[dd]_{\psi} && J \ar[dd]_{\phi} \\
				\\
				I&& E \ar[ll]_-s \ar[rr]^-p && B \ar[rr]^-t && I
			}
			\end{equation}
If $A$ is a categorical algebra of $T$, the algebra $f^*(A)$ has the following explicit description. The underlying collection of $f^*(A)$ is given by the collection $f^*(A)_j = A_{\phi_j}.$ The structure functor
$$m_{(c,\sigma)}: f^*(A)_{v(\sigma(1))}\times\ldots\times f^*(A)_{v(\sigma(k))} \to f^*(A)_{u(c)}$$
is given by the functor $m_{(\phi(c),\sigma')}$, where $\sigma'$ is the composite
$$\{1,\ldots,k\} \stackrel{\sigma}{\to} q^{-1}(c)\stackrel{\pi'}{\to}\pi^{-1}(\psi(c)).$$    
 In the last display $\pi'$ is the bijection induced by $\pi$ on fibers due to  the fact that the middle square is a pullback. 
  
 \begin{proposition} Let $A$ be a categorical algebra of a polynomial monad $T.$ Also let  
$f:S\to T$ be a map of polynomial monads. Then there is a cartesian map of polynomial monads $\int f: \int f^*(A) \to \int A$ making the following diagram commutative 
\begin{equation}\label{Grothendieck}
			\xymatrix{
				\int f^*(A) \ar[rr]^{} \ar[dd]_{} && \int A \ar[dd]^{\Gamma} \\
				\\
				S \ar[rr]^{f} && T
			}
			\end{equation}
Moreover, this diagram is a pullback of polynomial monads.			
 \end{proposition}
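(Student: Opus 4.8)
The plan is to construct $\int f$ explicitly on the level of polynomial data and then verify that the resulting square is a pullback in $\mathrm{PMon}$. Recall that a cartesian map of polynomial monads is determined by a commutative diagram of the shape (\ref{injective}) whose middle square is a pullback, so it suffices to produce such a diagram relating the polynomial of $\int f^*(A)$ to that of $\int A$ over the square (\ref{injective}) defining $f$. First I would write down the colours, operations and marked operations of $\int f^*(A)$ using the definition of the Grothendieck construction applied to the categorical $S$-algebra $f^*(A)$, and compare them term by term with those of $\int A$. A colour of $\int f^*(A)$ is a pair $(j,a)$ with $a\in f^*(A)_j = A_{\phi(j)}$; we send it to $(\phi(j),a)$, a colour of $\int A$. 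An operation of $\int f^*(A)$ is a morphism $f_{(c,\sigma)}\colon m_{(c,\sigma)}(a_1,\ldots,a_k)\to y$ in $f^*(A)_{u(c)} = A_{\phi(u(c))} = A_{t(\psi(c))}$; using the explicit description of $m_{(c,\sigma)}$ in $f^*(A)$ as $m_{(\psi(c),\sigma')}$ recalled just before the statement, this is literally the data of an operation of $\int A$ lying over $\psi(c)\in B$. This defines the map on operations; the map on marked operations is forced by the same recipe since marking is carried along the bijection on fibres induced by $\pi$.

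Next I would check functoriality with respect to the monad structure: the unit of $\int f^*(A)$ on $(j,a)$ is $\mathrm{id}_{(e_j,1)}$ with $e_j = \pi'(j)$, which maps to $\mathrm{id}_{(e_{\phi(j)},1)}$, the unit of $\int A$ on $(\phi(j),a)$, because $f$ preserves units; and the composition formula in the Grothendieck construction (composite underlying morphism $g\circ m(f_1,\ldots,f_k)$, composite bijection $\sigma\circ(\sigma_1\times\cdots\times\sigma_k)$) is transported by $f$ precisely because $f$ is a cartesian \emph{map of monads}, i.e.\ it commutes with multiplications. Commutativity of (\ref{Grothendieck}) is then immediate: $\Gamma\circ\int f$ sends the operation $f_{(c,\sigma)}$ to $\psi(c)$, which is exactly $f$ applied to the operation $\Gamma'$ of $\int f^*(A)$ sends it to, namely $c$.

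For the pullback assertion I would argue as follows. Limits of polynomial monads over a fixed shape are computed colour-wise, operation-wise and marked-operation-wise (the forgetful functor from $\mathrm{PMon}$ to diagrams in $\Set$ of the shape $I\leftarrow E\rightarrow B\rightarrow I$ creates the relevant limits, since a cartesian monad map is extra structure/property on such a diagram morphism and the cartesianness is a pullback condition that is stable). So it is enough to see that on colours the square
\[
\xymatrix{
\Ob\bigl(\int f^*(A)\bigr) \ar[r]\ar[d] & \Ob\bigl(\int A\bigr)\ar[d] \\
J \ar[r]^{\phi} & I
}
\]
and the analogous squares of operations and marked operations are pullbacks in $\Set$. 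On colours: a pair consisting of $j\in J$ and a colour $(i,a)$ of $\int A$ with $i=\phi(j)$ is exactly the datum of $j$ together with $a\in A_{\phi(j)} = f^*(A)_j$, i.e.\ a colour of $\int f^*(A)$. On operations: an element $c\in C$ together with an operation of $\int A$ over $\psi(c)$ amounts, via the fibre bijection $\pi^{-1}(\psi(c))\cong q^{-1}(c)$ coming from the pullback middle square of (\ref{injective}), to a choice of objects indexed by $q^{-1}(c)$, an object $y\in A_{t(\psi(c))} = A_{\phi(u(c))}$, and a morphism $m_{(\psi(c),\sigma')}(\ldots)\to y$; unwinding the definition of $m_{(c,\sigma)}$ in $f^*(A)$, this is precisely an operation of $\int f^*(A)$ over $c$. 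The marked-operation case is identical with one $e\in p^{-1}(\psi(c))$, equivalently one element of $q^{-1}(c)$, singled out.

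The main obstacle I anticipate is purely bookkeeping: keeping the two different parametrisations of operations straight — operations of $\int A$ are indexed by $b\in B$ while operations of $\int f^*(A)$ are indexed by $c\in C$, and the translation between them runs through the bijection $\pi'$ on fibres and the identification $m_{(c,\sigma)}^{f^*(A)} = m_{(\psi(c),\sigma')}^{A}$ — and making sure the equivariancy conditions (condition (4) in the definition of $\int A$) match up under this translation, which they do because $\sigma\mapsto\sigma'$ is compatible with the permutation actions. There is no genuine conceptual difficulty: once one observes that $\int(-)$ is built functorially from $f^*(-)$ and that $f^*$ of a pullback-shaped square of data is again such a square, the pullback property falls out. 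If one wishes to avoid the colour/operation-wise computation altogether, an alternative is to invoke Proposition~\ref{sectionP}: both squares represent the same universal property for sections, but I would still prefer the direct verification since it also pins down $\int f$ on the nose and shows cartesianness of the top map.
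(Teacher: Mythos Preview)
Your proposal is correct and follows essentially the same approach as the paper: explicitly describe $\int f$ on colours by $(j,a)\mapsto(\phi(j),a)$, on operations by reinterpreting a morphism $m_{(c,\sigma)}(\ldots)\to y$ in $f^*(A)_{u(c)}=A_{t(\psi(c))}$ as the corresponding operation of $\int A$ over $\psi(c)$ via $\sigma\mapsto\sigma'$, and then verify the pullback property componentwise. The paper's proof is in fact considerably terser than yours --- it leaves the pullback check as ``a simple exercise'' --- so you have filled in more than is strictly required.
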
  
 \begin{proof}   The colours of $\int f^*(A)$ are pairs $(j,a)$ where $a\in f^*(A)_j = A_{\phi(j)}.$
 The operations of $\int f^*(A)$ are morphisms
 $f_{\sigma}: m_{(c,\sigma)}(a_{{\sigma(1)}},\ldots,a_{\sigma(k)})\to  y$ in $f^*(A)_{u(c)} = A_{t(\psi(c))},$ 
 where $a_{\sigma(i)} \in f^*(A)_{v(\sigma(i))} = A_{\phi(v(\sigma(i)))} = A_{s(\psi(\sigma(i)))} = A_{s(\sigma'(i))}.$
We define  $\int \phi (j,a) = (\phi(j),a)$ on colours. We observe that an operation $f_{\sigma}$ as above can be interpreted as an operation 
$f_{\sigma'}: m_{\psi(c),\sigma'}(a_{\sigma'(1)},\ldots, a_{\sigma'(k)})\to y$
and, hence we define $\int \phi (f_{\sigma}) = f_{\sigma'}.$ The definition of $\int f$ on marked operations is obvious. 
 
 It is now a simple exercise to check that the square of polynomial monads is a pullback.

 \end{proof}
 
We obtain the following generalisation of  Proposition \ref{sectionP}:
 \begin{corol}\label{sections} 
 The category of internal algebras $\Int_S(A)$ of $S$ in $A$ is isomorphic to the category $(\mathbf{PMon}/{{ T}})(S,\int A)$ of cartesian maps between polynomial monads $S\to \int A$ over $T$ and their natural transformations such that their composite with $\Gamma$ is the identity transformation of $f.$ 
 \end{corol}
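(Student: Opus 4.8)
Plan of proof. The strategy is to combine the two preceding results. By Proposition \ref{sectionP}, internal $T$-algebras in a categorical $T$-algebra $C$ are exactly sections of $\Gamma:\int C\to T$, i.e.\ the hom-category $(\mathbf{PMon}/T)(T,\int C)$; I want the relative version for $S$. The key input is the preceding Proposition, which produces the pullback square \eqref{Grothendieck}: $\int f^*(A)\cong S\times_T \int A$, with the projection to $S$ playing the role of the Grothendieck map $\Gamma_{f^*(A)}$ and the other projection being $\int f:\int f^*(A)\to\int A$.

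First I would unwind both sides. On one side, by Definition \ref{intalg2} and Proposition \ref{sectionP} applied to the polynomial monad $S$ and the categorical $S$-algebra $f^*(A)$, the category $\Int_S(A)=\Int_S(f^*(A))$ is isomorphic to $(\mathbf{PMon}/S)(S,\int f^*(A))$, the category of sections of the projection $\int f^*(A)\to S$ together with natural transformations whose whiskering with this projection is the identity of $\mathrm{id}_S$. On the other side, $(\mathbf{PMon}/T)(S,\int A)$ is by definition the category of cartesian maps $S\to\int A$ over $T$ (i.e.\ commuting with $\Gamma$ and $f$) and natural transformations between them whose whiskering with $\Gamma$ is the identity of $f$.

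Next I would invoke the universal property of the pullback square \eqref{Grothendieck} at the level of the $2$-category $\mathbf{PMon}$. Since \eqref{Grothendieck} is a pullback, giving a cartesian monad map $S\to\int A$ over $T$ (that is, a map into $\int A$ whose composite with $\Gamma$ equals $f:S\to T$, where the source is also mapped to $S$ by $\mathrm{id}_S$) is the same as giving a cartesian monad map $S\to\int f^*(A)$ over $S$, i.e.\ a section of $\int f^*(A)\to S$. One has to check that $2$-cells transport correctly under this correspondence: a natural transformation between two such maps into $\int A$ that restricts to the identity of $f$ under $\Gamma$ corresponds, again by the pullback property (pullbacks in $\mathbf{PMon}$ are $2$-categorical), to a natural transformation between the two sections of $\int f^*(A)\to S$ that restricts to the identity of $\mathrm{id}_S$. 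Chaining these isomorphisms with Proposition \ref{sectionP} (in the form $\Int_S(f^*(A))\cong(\mathbf{PMon}/S)(S,\int f^*(A))$) gives the claimed isomorphism $\Int_S(A)\cong(\mathbf{PMon}/T)(S,\int A)$.

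The main obstacle I anticipate is purely bookkeeping: verifying that the pullback in \eqref{Grothendieck} really is a $2$-pullback (not merely a $1$-pullback) in $\mathbf{PMon}$, so that natural transformations between the relevant maps are also detected by it, and checking that the various ``whiskering is an identity'' side conditions on $2$-cells match up on the nose under the correspondence. Given the explicit description of $2$-cells in $\mathbf{PMon}$ recalled just before Proposition \ref{sectionP} and the explicit description of $\int f$ on colours, operations and marked operations given in the proof of the preceding Proposition, this is a direct but slightly tedious diagram chase; no conceptual difficulty beyond that.
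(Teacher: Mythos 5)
Your proof is correct and follows the route the paper clearly intends: it invokes Definition~\ref{intalg2} to identify $\Int_S(A)$ with $\Int_S(f^*(A))$, then applies Proposition~\ref{sectionP} to the monad $S$ and the categorical $S$-algebra $f^*(A)$, and finally uses the pullback square~(\ref{Grothendieck}) to transport from $(\mathbf{PMon}/S)(S,\int f^*(A))$ to $(\mathbf{PMon}/T)(S,\int A)$. The paper itself offers no proof beyond the phrase ``we obtain the following generalisation of Proposition~\ref{sectionP},'' so your write-up, including the observation that one must confirm the square is a strict $2$-pullback so that the $2$-cells with the appropriate whiskering conditions also correspond, supplies exactly the diagram chase the authors leave implicit.
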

 
 \section{Classifiers for maps between polynomial monads}

For any cartesian morphism of cartesian monads $f:S\to T$ one can associate a  categorical $T$-algebra $T^S$ with certain  universal property \cite{EHBat,BB,W}. Namely,
this is the object representing the $2$-functor
(\ref{Int}).  This categorical $T$-algebra  is called {\it the classifier of internal $S$-algebras inside categorical $T$-algebras} and is denoted $T^S$. 

In particular, if $f= Id$ the $T$-algebra $T^T$ is called {\it an absolute classifier of $T$}. It was proved in \cite{EHBat,BB} that an absolute classifier of $T$ can be computed as a truncated simplicial $T$-algebra:
 
\[
			\xymatrix{
				\F_T1 \ar[rr]|-{T\eta_1} && \F_T(T 1) \ar@<-1ex>[ll]_-{\mu_1} \ar@<1ex>[ll]^-{T\tau} && \F_T(T^2 1) \ar[ll]|-{\mu_{T1}} \ar@<-1ex>[ll]_-{T \mu_1} \ar@<1ex>[ll]^-{T^2 \tau}
			}
\]
\noindent where $1$ is the terminal $I$-collection, $\tau:T(1)\to 1$ is the unique map, $\F_T$ is the free $T$-algebra functor. This simplicial object satisfies Segal's condition because $T$ is a cartesian monad and, hence, represents an internal category in the category $\Alg_T(\Set).$ The last category is equivalent to $\Alg_T(\Cat)$ again due to cartesianness of $T.$ 

It is important to understand that $T^T$ is  a family of categories indexed by $i\in I.$ It has a universal internal $T$-algebra $1\to T^T$ which generates it. A component of this internal algebra $1_i\in (T^T)_i$ is a terminal object in this category.

\begin{example} If $T$ is a small category, the categorical $T$-algebra $T^T$ is the presheaf of categories on $T$ given by comma-categories $ T^T(i) = T/i.$ The universal internal algebra is given by objects $i\stackrel{id}{\to} i$ for each $i.$  

\end{example}

     \begin{example}\cite{BB} For the { free} monoid monad $Mon$ the absolute classifier $Mon^{Mon}$ is the monoidal category of all finite ordinals $\Delta_{alg}.$ 
The universal internal algebra is given by the terminal ordinal $[0].$ 
\end{example}
     
     \begin{example}\cite{BB} For the free nonsymmetric operad monad $NOp$, the absolute classifier $NOp^{NOp}$ is the non-symmetric categorical operad of planar trees. The morphisms are generated by contractions of internal edges and introducing a single vertex on an edge. The canonical internal operad is given by the sequence of corollas. 
    
     \end{example}

There are analogous formulas in the non absolute case. Namely,
 given  a cartesian map between polynomial monads  $f:S\to T$ as in (\ref{injective}), we have the following commutative square of adjunctions:  
{
	\[
		\xymatrix{
			\Alg_S \ar@<-.5ex>[rr]_-{f_!} \ar@<-.5ex>[dd]_-{\scriptstyle \mathcal{U}_S} && \Alg_T \ar@<-.5ex>[ll]_-{f^*} \ar@<-.5ex>[dd]_-{\scriptstyle \mathcal{U}_T} \\
			\\
			\Set^J \ar@<-.5ex>[rr]_-{\phi_!} \ar@<-.5ex>[uu]_-{\scriptstyle \mathcal{F}_S} && \Set^I \ar@<-.5ex>[ll]_-{\phi^*} \ar@<-.5ex>[uu]_-{\scriptstyle \mathcal{F}_T}
		}
	\]
}

Here $\phi^*$ is the  restriction functor $\Set^I\to \Set^J$ induced by  $\phi: J\to I$ and $\phi_!:\Set^J\to \Set^I$ is its left adjoint given by coproducts over fibers of $\phi.$  

The $T$-categorical algebra is given then by an internal categorical object similar to the absolute case:

\[
			\xymatrix{
				\F_T(\phi_!(1)) \ar[rr]|-{T\eta_1} && \F_T(\phi_!(S 1)) \ar@<-1ex>[ll]_-{\mu_1} \ar@<1ex>[ll]^-{T!} && \F_T(\phi_!(S^2 1)) \ar[ll]|-{\mu_{T1}} \ar@<-1ex>[ll]_-{T \mu_1} \ar@<1ex>[ll]^-{T^2 !}						}
			\]  
where $1$ is now the terminal $J$-collection.

The classifier construction provides a $2$-functor
$$T^{(-)}:\mathbf{PMon}/T \to Alg_T(\Cat).$$

\begin{proposition} Let $T$ be a polynomial monad. The classifier $2$-functor $$T^{(-)}:\mathbf{PMon}/T \to Alg_T(\Cat).$$
is the ($\Cat$-enriched) left adjoint to the Grothendieck construction $2$-functor $$\int (-): Alg_T(\Cat) \to \mathbf{PMon}/T.$$
\end{proposition}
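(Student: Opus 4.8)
The plan is to establish the adjunction by exhibiting, for every polynomial monad map $g:S\to R$ (object of $\mathbf{PMon}/T$ via a chosen $h:R\to T$... more precisely for $g:S\to T$ in $\mathbf{PMon}/T$) and every categorical $T$-algebra $A$, a natural isomorphism of hom-categories
\[
Alg_T(\Cat)(T^S, A) \;\cong\; (\mathbf{PMon}/T)(S, \textstyle\int A).
\]
The right-hand side is, by Corollary \ref{sections}, isomorphic to $\Int_S(A)$: the category of cartesian maps $S\to\int A$ over $T$ together with the appropriate $2$-cells. On the other hand, the classifier $T^S$ was \emph{defined} in Section 4 as the object representing the $2$-functor $\Int_S:\Alg_T(\Cat)\to\Cat$ of (\ref{Int}), so there is a natural isomorphism $Alg_T(\Cat)(T^S,A)\cong\Int_S(A)$ by the Yoneda-type universal property. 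Composing these two natural isomorphisms gives the required bijection on objects \emph{and} on morphisms, i.e. an isomorphism of categories natural in $A$; together with the fact that both $T^{(-)}$ and $\int(-)$ are $2$-functors (the former is asserted in Section 4, the latter follows from the construction in Section 3), this upgrades to a $\Cat$-enriched adjunction. So the real content is just to check that the representing isomorphism defining $T^S$ and the sections-description of $\Int_S$ are compatible, and that everything is $2$-natural.

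Concretely I would proceed in the following steps. First, recall the two natural isomorphisms: $(\alpha_A):\Alg_T(\Cat)(T^S,A)\xrightarrow{\ \sim\ }\Int_S(A)$ from the defining universal property of the classifier (Section 4), and $(\beta_A):\Int_S(A)\xrightarrow{\ \sim\ }(\mathbf{PMon}/T)(S,\int A)$ from Corollary \ref{sections}. Second, verify naturality of $\beta$ in $A$: given a map $g:A\to A'$ of categorical $T$-algebras, $\int(-)$ sends it to a map $\int g:\int A\to\int A'$ over $T$ (this is where one uses that $\int(-)$ is functorial — the construction of $\int A$ in Section 3 is manifestly functorial in $A$, post-composition of the structure morphisms $f_{(b,\sigma)}$ with components of $g$), and one checks that post-composition with $\int g$ corresponds under $\beta$ to the functor $\Int_S(g):\Int_S(A)\to\Int_S(A')$; this is a direct unwinding of the explicit description of an internal algebra as a section. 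Third, set $\varphi_A := \beta_A\circ\alpha_A$ and conclude it is a natural isomorphism of categories, hence gives the hom-category bijection of an adjunction $T^{(-)}\dashv\int(-)$. Fourth, check the enrichment: the naturality just established is in the ordinary category $Alg_T(\Cat)$, but since $T^{(-)}$ is a $2$-functor $\mathbf{PMon}/T\to Alg_T(\Cat)$ and $\int(-)$ is a $2$-functor the other way, and the bijection $\varphi$ is built from $2$-natural constructions (the representability of $\Int_S$ holds $2$-categorically, and $\int g$ behaves well on $2$-cells by the description of $2$-morphisms in $\mathbf{PMon}/T$ via the map $\sigma:J\to B$), the bijection $\varphi_A$ is in fact an isomorphism of categories natural in the $2$-categorical sense, which is exactly a $\Cat$-enriched adjunction.

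The step I expect to be the main obstacle is the bookkeeping of $2$-cells: one must check that $T$-natural transformations between internal $S$-algebras in $A$ correspond precisely, under $\beta$, to the $2$-morphisms in $\mathbf{PMon}/T$ between the associated sections (those natural transformations of cartesian monad maps whose whiskering with $\Gamma$ is the identity of $f$), and that this correspondence is compatible with the representability isomorphism $\alpha$ on $2$-cells, so that $\varphi$ is genuinely $\Cat$-enriched and not merely an ordinary adjunction with an isomorphism on hom-sets. Corollary \ref{sections} already packages the object-level and morphism-level correspondence, so the remaining work is to see that the $\Cat$-enrichment of the classifier $2$-functor (the action of $T^{(-)}$ on $2$-cells of $\mathbf{PMon}/T$) matches the $\Cat$-enrichment of $\int(-)$ on the other side through $\varphi$; this is a diagram-chase using the explicit formula for a $2$-morphism $\xi:f\to f'$ via $\sigma:J\to B$ and the explicit description of the structure morphisms of $\int A$. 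Everything else is formal once these compatibilities are in place, and the proof in the paper is indeed likely to be brief — it essentially amounts to ``combine the defining property of $T^S$ with Corollary \ref{sections}.''
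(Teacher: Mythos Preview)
Your proposal is correct and follows exactly the paper's approach: compose the representing isomorphism $\Alg_T(\Cat)(T^S,A)\cong\Int_S(A)$ from the definition of the classifier with the isomorphism $\Int_S(A)\cong(\mathbf{PMon}/T)(S,\int A)$ from Corollary~\ref{sections}. The paper's proof is the one-line chain of these two isomorphisms, and your additional bookkeeping about $2$-naturality and $\Cat$-enrichment is simply a careful elaboration of what the paper leaves implicit.
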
 

\begin{proof} Let $A$ be a categorical algebra of $T$ and let $f:S\to T$ be  a polynomial monad over $T.$ Then 
$$Alg_T(\Cat)(T^S, A) \cong \Int_S(A) \cong \mathbf{PMon}/T(S,\int A)$$
by Corollary \ref{sections}.

\end{proof} 
\begin{corol} The classifier functor commutes with colimits. 

In particular, given a pushout diagram of polynomial monads 
\begin{equation}\label{ABCD}
			\xymatrix{
				A\ar[rr]^{f} \ar[dd]_{g} && B\ar[dd]^{F} \\
				\\
				C \ar[rr]^{G} && D \ ,
			}
			\end{equation}

we obtain a pushout of categorical $D$-algebras :
\begin{equation}\label{pushoutdd}
			\xymatrix{
				D^A\ar[rr]^{D^f} \ar[dd]_{D^g} && D^B\ar[dd]^{D^F} \\
				\\
				D^C \ar[rr]^{D^G} && D^D
			}
\end{equation}
\end{corol}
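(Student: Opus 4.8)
The statement has two parts: (1) the classifier $2$-functor $T^{(-)} : \mathbf{PMon}/T \to \mathrm{Alg}_T(\Cat)$ commutes with colimits; and (2) the specialisation of this to a pushout square yields a pushout of categorical $D$-algebras. The first part is essentially immediate from the preceding proposition: the classifier functor was just shown to be a $\Cat$-enriched left adjoint (to the Grothendieck construction $\int(-)$), and left adjoints preserve all colimits that exist. The only minor point worth a sentence is that we want it to preserve \emph{conical} colimits in the underlying categories, which follows from the $\Cat$-enriched adjunction by passing to underlying ordinary categories; the hom-isomorphism $\mathrm{Alg}_T(\Cat)(T^S,A)\cong\mathbf{PMon}/T(S,\int A)$ natural in $S$ and $A$ restricts to ordinary-category hom-sets, so $T^{(-)}$ is an ordinary left adjoint and hence cocontinuous.

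For the second part the subtlety is the change of ambient category. A pushout diagram of polynomial monads as in \eqref{ABCD} lives in $\mathrm{PMon}$, not in any single slice $\mathbf{PMon}/T$. To apply part (1) I would first observe that $B$, $C$, $D$ all receive canonical maps to $D$ (namely $F$, $G$, and $\mathrm{id}_D$ respectively), and $A$ maps to $D$ via $Ff = Gg$; moreover these maps are compatible with $f$ and $g$, so the whole square \eqref{ABCD} is the image under the forgetful functor $\mathbf{PMon}/D \to \mathbf{PMon}$ of a square in $\mathbf{PMon}/D$. Then I would invoke the standard fact that for any object $D$ of a category with pushouts, the forgetful functor $\mathbf{PMon}/D \to \mathbf{PMon}$ creates (in particular reflects) pushouts: since \eqref{ABCD} is a pushout in $\mathbf{PMon}$ and the cocone lifts to $\mathbf{PMon}/D$, it is a pushout in $\mathbf{PMon}/D$. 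Applying the cocontinuous functor $D^{(-)} : \mathbf{PMon}/D \to \mathrm{Alg}_D(\Cat)$ from part (1) to this pushout in $\mathbf{PMon}/D$ produces the pushout square \eqref{pushoutdd} of categorical $D$-algebras, since $D^A$, $D^B$, $D^C$, $D^D$ are by definition the classifiers of the objects $(A\to D)$, $(B\to D)$, $(C\to D)$, $(D\xrightarrow{\mathrm{id}} D)$ of $\mathbf{PMon}/D$.

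One should be slightly careful about $2$-categorical versus $1$-categorical colimits: \eqref{ABCD} and \eqref{pushoutdd} are ordinary (conical, $1$-categorical) pushouts, whereas the adjunction of the previous proposition is $\Cat$-enriched, i.e. a $2$-adjunction. But a $\Cat$-enriched left adjoint is in particular an ordinary left adjoint between the underlying $1$-categories, hence preserves ordinary colimits; so there is no difficulty. I expect the only genuinely load-bearing step to be the reflection of pushouts along $\mathbf{PMon}/D \to \mathbf{PMon}$, and even that is a routine generality about slice categories; the rest is formal bookkeeping with the adjunction already established. I would therefore keep the written proof to two or three lines: cite the adjunction of the previous proposition for cocontinuity, note that a pushout of polynomial monads lies over $D$ and hence is a pushout in $\mathbf{PMon}/D$, and apply $D^{(-)}$.
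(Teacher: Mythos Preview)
Your proposal is correct and is precisely the argument the paper intends: the corollary is stated immediately after the proposition establishing that $T^{(-)}$ is left adjoint to $\int(-)$, with no further proof given, so ``left adjoints preserve colimits'' is the entire content. Your additional care about lifting the pushout to the slice $\mathbf{PMon}/D$ and about conical versus $2$-categorical colimits spells out details the paper leaves implicit, but the approach is the same.
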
 

The following functorial properties of classifiers will be very useful for us:

\begin{proposition}\label{f_!(S^S)} Let $f:S\to T$ be a map of polynomial monads. Let $f^*$ be the restriction functor on categorical algebras and let $f_!$ be its left adjoint. Then
$$f_!(S^S) \cong T^S.$$
\end{proposition}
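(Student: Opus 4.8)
The statement $f_!(S^S)\cong T^S$ should follow formally from the adjunction established in the previous proposition, together with the universal property of the absolute classifier $S^S$. The plan is to verify the isomorphism by comparing the functors $Alg_T(\Cat)(f_!(S^S),-)$ and $Alg_T(\Cat)(T^S,-)$ on an arbitrary categorical $T$-algebra $A$ and invoking Yoneda.

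First I would unwind the left-hand side: for any categorical $T$-algebra $A$,
\[
Alg_T(\Cat)(f_!(S^S),A)\cong Alg_S(\Cat)(S^S, f^*A)
\]
by the adjunction $f_!\dashv f^*$ on categorical algebras. Next, by the universal property of the absolute classifier $S^S$ (it represents the $2$-functor $\Int_S:\Alg_S(\Cat)\to\Cat$ of internal $S$-algebras, i.e.\ the case $f=\mathrm{Id}$ of the classifier construction), the right-hand side of the previous display is isomorphic to $\Int_S(f^*A)$, the category of internal $S$-algebras in the $S$-algebra $f^*A$. On the other hand, by Definition~\ref{intalg2} the category of internal $S$-algebras \emph{in the categorical $T$-algebra $A$} is \emph{defined} to be precisely $\Int_S(f^*A)$, and by the universal property of the relative classifier $T^S$ (it represents the $2$-functor (\ref{Int})) this is isomorphic to $Alg_T(\Cat)(T^S,A)$. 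Chaining these isomorphisms gives a natural isomorphism $Alg_T(\Cat)(f_!(S^S),A)\cong Alg_T(\Cat)(T^S,A)$, natural in $A$, so the enriched Yoneda lemma yields $f_!(S^S)\cong T^S$.

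I would include a brief remark that all the displayed isomorphisms are $\Cat$-natural (or $2$-natural) in $A$, so that the conclusion holds as an equivalence in the $2$-category $Alg_T(\Cat)$, not merely an isomorphism of underlying objects; this follows because each step is a known $2$-adjunction or representability statement from the preceding parts of the paper. Alternatively, one can give a more direct argument: $S^S=\int_{S}(\mathrm{Id})$ in the sense that $S^S$ is the classifier of the identity over $S$, and the adjunction of the previous proposition identifies $f_!\circ(S^{(-)})$ with $T^{(-)}\circ(f\circ-)$ as functors $\mathbf{PMon}/S\to Alg_T(\Cat)$, since $f_!$ is left adjoint to $f^*$ and the Grothendieck construction is compatible with base change along $f$ (Proposition preceding Corollary~\ref{sections}, diagram (\ref{Grothendieck}) being a pullback); evaluating at the terminal object $\mathrm{Id}:S\to S$ of $\mathbf{PMon}/S$ gives $f_!(S^S)\cong T^{S}$.

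The main obstacle I anticipate is purely bookkeeping: making sure the relative classifier $T^S$ is correctly identified with the representing object for internal $S$-algebras \emph{in $T$-algebras}, i.e.\ that the restriction $f^*$ appearing in Definition~\ref{intalg2} matches the restriction $f^*$ appearing in the adjunction $f_!\dashv f^*$ on categorical algebras — both are restriction along the same cartesian map $f$, so this is immediate, but it is the one place where one must be careful not to conflate the absolute and relative classifier constructions. Once that identification is in place, the proof is a one-line diagram chase through the representable functors.
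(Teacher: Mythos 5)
Your proof is correct and is exactly the argument the paper has in mind: the paper's own proof just says ``a simple exercise in universal properties of adjoints and classifiers,'' and your chain of isomorphisms $Alg_T(\Cat)(f_!(S^S),A)\cong Alg_S(\Cat)(S^S,f^*A)\cong\Int_S(f^*A)\cong Alg_T(\Cat)(T^S,A)$ followed by Yoneda spells that out faithfully, including the key observation that the $f^*$ in Definition~\ref{intalg2} is the same restriction functor as the one in the adjunction $f_!\dashv f^*$.
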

\begin{proof} The proof is a simple exercise in universal properties of adjoints and classifiers.
\end{proof} 
This implies another functorial property of classifiers. 
\begin{proposition}\label{square}    
Any commutative square of maps of polynomial monads 
\begin{equation}\label{abcd}
			\xymatrix{
				A\ar[rr]^{f} \ar[dd]_{g} && B\ar[dd]^{F} \\
				\\
				C \ar[rr]^{G} && D
			}
			\end{equation}
induces a map of classifiers
$$G^f:C^A \to G^*( D^B)$$
functorial with respect to horizontal pasting of squares. 
\end{proposition}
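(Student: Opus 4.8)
\textbf{Proof plan for Proposition \ref{square}.}

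The plan is to reduce the construction of the induced map $G^f \colon C^A \to G^*(D^B)$ to the adjunction between the classifier functor and the Grothendieck construction, together with the functorial property of Proposition \ref{f_!(S^S)}. First I would observe that by Proposition \ref{f_!(S^S)} applied to the map $h\colon A\to D$ (the common diagonal of the square \eqref{abcd}) we have $D^A\cong h_!(A^A)$, and similarly $D^B\cong F_!(B^B)$. The commutative square \eqref{abcd} factors $h$ as $F\circ f = G\circ g$, so $h_! \cong F_!\circ f_!$. Hence a map $D^A\to D^B$ of categorical $D$-algebras is the same, by adjunction $F_!\dashv F^*$, as a map $f_!(A^A)\to F^*(F_!(B^B))$ of categorical $B$-algebras, and the latter has a canonical candidate: the unit $B^B = f_!(f^*(\cdots))$ composed appropriately. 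More directly, I would use the universal property of $A^A$: the classifier $A^A$ carries the universal internal $A$-algebra $1\to A^A$, and to give a map $C^A\to G^*(D^B)$ of categorical $C$-algebras is, by the adjunction of the preceding proposition, to give an internal $A$-algebra in $G^*(D^B)$, i.e. an object of $\Int_A(G^*(D^B))$.

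So the cleanest route is: unwind $Alg_C(\Cat)(C^A, G^*(D^B)) \cong \Int_A(G^*(D^B))$ using the classifier/Grothendieck adjunction (here viewing $G^*(D^B)$ as a categorical algebra over $C$ via $g^* = $ restriction along $A\to C$, noting $A\to C$ and then $C^{(-)}$; one must be careful that $A$ maps to $C$ and $C^A$ is the classifier of internal $A$-algebras inside categorical $C$-algebras, and $G^*(D^B)$ is indeed a categorical $C$-algebra). Then I would exhibit a canonical element of $\Int_A(G^*(D^B))$: start from the universal internal $B$-algebra $1\to B^B$ inside categorical $B$-algebras, which corresponds under $D^{(-)}$ and Proposition \ref{f_!(S^S)} to the universal internal $B$-algebra $1\to D^B$ inside categorical $D$-algebras; restrict this internal $B$-algebra along $f\colon A\to B$ to get an internal $A$-algebra $1\to f^*(D^B)$ inside categorical $B$-algebras; and observe that $f^*(D^B)$ sits over $G^*(D^B)$ appropriately because the square commutes, so that this internal $A$-algebra descends to an internal $A$-algebra in $G^*(D^B)$. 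This element is exactly $G^f$.

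Concretely, I expect the explicit description to be short: an internal $A$-algebra in $G^*(D^B)$ is a family of objects indexed by the colours of $A$ together with structure morphisms for the operations of $A$, and all of these are obtained by transporting the universal internal $B$-algebra of $D^B$ along $f$ and then along $G$. Functoriality with respect to horizontal pasting of squares then follows because all the constructions involved — $f_!$, $F_!$, the units of the adjunctions, and restriction of internal algebras along maps of polynomial monads — are themselves functorial, and pasting of squares corresponds to composition of the maps $h=F\circ f$; one checks that the canonical internal algebra produced for a pasted square is the one obtained by composing the canonical internal algebras of the two constituent squares.

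The main obstacle will be purely bookkeeping: keeping straight which polynomial monad each classifier is "internal inside" and verifying that $G^*(D^B)$ really is a categorical $C$-algebra in a way compatible with the square, so that the adjunction $C^{(-)}\dashv \int(-)$ over $C$ applies and yields $\Int_A(G^*(D^B))$ rather than some other category of internal algebras. Once the identifications $Alg_C(\Cat)(C^A,G^*(D^B))\cong \Int_A(G^*(D^B))$ and the transport of the universal internal algebra of $D^B$ along $f$ are set up correctly, the existence of $G^f$ and its functoriality are formal. There is no serious homotopical or combinatorial content here — the content is entirely in the universal properties already established in Section 4 — so I would keep the written proof to the level of "this follows from Proposition \ref{f_!(S^S)} and the adjunction of Proposition (the previous one) by a diagram chase," perhaps spelling out the explicit internal algebra for the reader's convenience.
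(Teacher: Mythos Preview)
Your approach is correct and rests on the same universal properties the paper uses, but the paper organizes the argument slightly differently and more economically. Rather than directly exhibiting an internal $A$-algebra in $G^*(D^B)$, the paper factors the map as a composite
\[
C^A \longrightarrow G^*(D^A) \longrightarrow G^*(D^B).
\]
The first arrow is obtained purely from the adjunction $G_!\dashv G^*$: a map $C^A\to G^*(D^A)$ corresponds to a map $G_!(C^A)\to D^A$, and Proposition~\ref{f_!(S^S)} gives $G_!(C^A)\cong G_!(g_!(A^A))\cong D^A$, so one takes the identity. The second arrow is just $G^*$ applied to $D^f:D^A\to D^B$, coming from the upper triangle of the square. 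Functoriality under horizontal pasting is then immediate from the evident functoriality of each factor.

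Your route---pass to $\Int_A(G^*(D^B))$ via the representing property and transport the universal internal $B$-algebra of $D^B$ along $f$---produces exactly this same map, and indeed Proposition~\ref{Gf} (which follows) characterizes $G^f$ in precisely those terms. The only thing to tidy up in your write-up is the notation: $D^B$ is a $D$-algebra, so writing ``$f^*(D^B)$'' is not quite right; the object you restrict is $F^*(D^B)$, and the internal $A$-algebra you build is $1\to f^*F^*(D^B)=g^*G^*(D^B)$. Once that is straightened out your argument is complete, and the bookkeeping concern you flagged at the end is exactly the only thing that needs care.
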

\begin{proof} Observe that we have a natural map of classifiers $C^A\to G^* (D^A).$ Indeed, by adjunction such a map corresponds to a map $G_!(C^A) \to D^A.$ 
But $G_!(C^A) \cong G_!(g_!(A^A) )\cong D^A$ as proved in Proposition \ref{f_!(S^S)}. 

We can now take the composite
$$C^A\to G^*( D^A) \to G^* (D^B),$$
where the last map is induced by the  upper commutative triangle of the square. 

\end{proof}

More generally given a commutative square (\ref{abcd}) let $X$ and $Y$  be categorical algebras of $C$ and $D$ correspondingly and let  $\xi:X\to G^*(Y)$ be its morphism. And let $x:1\to g^*(X)$ be an internal $A$-algebra in $C$ and $y:1\to F^*(Y)$ be an internal $B$-algebra in $Y.$  
\begin{defin}\label{inmor} A morphism of internal algebras $x\to y$ is a lax transformation $\phi:x\to f^*(y)$ of the internal algebras as  displayed on the following square of categorical $A$-algebras and their morphisms:
\begin{equation}\label{a'b'c'd'}
			\xymatrix{
				1\ar[rr]^{x} \ar[dd]_{id}  \xtwocell[rrdd]{}<>{\phi} && g^*(X)\ar[dd]^{g^*(\xi)} \\
				\\
				f^*(1) \ar[rr]^{f^*(y)} && g^*(G^*(Y))= f^*F^*(Y)
			}
			\end{equation}
\end{defin}


The following Proposition establishes a universal property of the morphism $G^f:$

\begin{proposition}\label{Gf} The morphism $G^f$ from Proposition \ref{square} induces a  morphism of canonical internal algebras $a$ in $C^A$ and $b$ in $D^B$ in the sense of the Definition \ref{inmor} such that $\phi:a\to f^*(b)$ is an identity  and is determined by this property.  
\end{proposition}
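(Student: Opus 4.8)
The plan is to unwind the definitions on both sides and trace the universal properties through the constructions in Proposition \ref{square}. Recall that $G^f:C^A\to G^*(D^B)$ was built in two stages: first the canonical map $C^A\to G^*(D^A)$, which by adjunction is the map $G_!(C^A)\cong G_!g_!(A^A)\cong D^A$ obtained from Proposition \ref{f_!(S^S)}, and then the composite with $G^*(D^A)\to G^*(D^B)$ induced by the upper triangle. I would first record what the canonical internal algebras $a\in C^A$ and $b\in D^B$ are: by definition of the classifier, $a$ is the universal internal $A$-algebra in $C^A$, i.e.\ the image of the identity under $\Alg_C(\Cat)(C^A,C^A)\cong\Int_A(C^A)$, and similarly for $b$. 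The morphism $G^f$ corresponds, under the adjunction of Proposition 5.(the classifier/Grothendieck adjunction), to a section-type datum over $D$, and hence to an internal $A$-algebra in $G^*(D^B)$; this is exactly the data of a morphism of internal algebras $a\to b$ in the sense of Definition \ref{inmor} once one checks the square (\ref{a'b'c'd'}) with $\xi = G^f$.

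Next I would verify the claim that $\phi$ is the identity. The point is that at each of the two stages the relevant comparison two-cell is forced to be an identity. For the first stage, the isomorphism $G_!(C^A)\cong D^A$ of Proposition \ref{f_!(S^S)} is compatible with the canonical internal algebras: the universal internal $A$-algebra $1\to g^*(C^A)$ is sent, along $C^A\to G^*(D^A)$, to the universal internal $A$-algebra in $G^*(D^A)$ on the nose, because both are characterised by the same universal property (representability of $\Int_A(-)$ restricted along $G$), with no room for a nontrivial transformation. For the second stage, the map $G^*(D^A)\to G^*(D^B)$ is the restriction along $G$ of the $D$-algebra map $D^A\to D^B$ induced by $F$ and $f$; under this map the canonical internal $A$-algebra $a$ in $D^A$ goes to $f^*$ of the canonical internal $B$-algebra $b$ in $D^B$, again by the defining universal property of $D^B$ as the classifier of internal $B$-algebras. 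So the composite sends $a$ to $f^*(b)$ with the comparison two-cell being a composite of identities, i.e.\ $\phi=\mathrm{id}$.

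Finally I would argue that this property pins $G^f$ down uniquely. By the classifier adjunction, a map $C^A\to G^*(D^B)$ is the same thing as an internal $A$-algebra in $G^*(D^B)$, which by Definition \ref{inmor} (with $\xi$ the map in question) and Corollary \ref{sections} is the same as a morphism of internal algebras from $a$ to some internal $B$-algebra. Requiring the target to be the canonical $b$ and the comparison $\phi$ to be the identity leaves no freedom: the internal $A$-algebra in $G^*(D^B)$ is completely determined, hence so is the corresponding map of classifiers. I would phrase this as: $G^f$ is the unique map making the canonical internal algebras correspond with identity comparison two-cell, which is precisely the assertion.

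The main obstacle I anticipate is bookkeeping rather than conceptual: one must be careful that the two adjunction isomorphisms in play — the classifier/Grothendieck adjunction of Proposition 5.(the one above Corollary on pushouts) and the $f_!\dashv f^*$ adjunction on categorical algebras used in Proposition \ref{f_!(S^S)} — are compatible in the way the argument needs, i.e.\ that the unit of $f_!\dashv f^*$ evaluated at $S^S$ realises the canonical internal $S$-algebra correctly, and that restriction $G^*$ is compatible with passing to canonical internal algebras. Making the phrase ``determined by this property'' precise requires identifying exactly which hom-category the datum lives in and checking the two-cell constraint cuts it down to a point; this is where I would spend the most care, though each individual verification is a direct diagram chase using only the universal properties already established.
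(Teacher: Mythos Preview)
Your proposal is correct and follows the same approach as the paper, which merely states ``The proof is by checking universal property.'' You have simply carried out in detail what the paper leaves implicit: tracing the two-stage construction of $G^f$ through the representability isomorphism $\Alg_C(\Cat)(C^A,-)\cong\Int_A(-)$ and verifying that at each stage the canonical internal algebra is preserved on the nose, then invoking the same representability to get uniqueness.
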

\begin{proof} The proof is by checking universal property. \end{proof}

\section{Homotopy theory of algebras and classifiers}

Let $(\Ee,\otimes, e)$ be a monoidal model category and let $T$ be a polynomial monad.  The category of $I$-collections  $\Ee^I$   has a projective model structure. For a polynomial monad $T$  we can try to transfer this model structure on collections     to the category of $T$-algebras along  the forgetful functor $\mathcal{U}_T:\Alg_T(\Ee)\to \Ee^I .$ This process requires some conditions on $\Ee$ and $T$ \cite{BB}. If we  need only a semi-model model structure on $\Alg_T(\Ee)$   it suffices for $\Ee$ to be cofibrantly generated \cite{WY}. In many cases of interest (for simplicial sets, topological spaces or chain complexes in characteristic $0,$ for example)  we do have a full model structure.   

Classifiers enter the scene because of the following theorem proved in \cite{BB}[Theorem 8.2]. 

\begin{theorem}\label{hocolim}Let $\Ee$ be a monoidal model category with a ``good'' realisation functor  for simplicial objects, and let $f:S\rightarrow T$ be a cartesian monad morphism between polynomial monads. Let $X$ be an $S$-algebra in $\Ee$ whose underlying $J$-collection is pointwise cofibrant. Then the $I$-collection underlying the left derived Quillen functor $\,\LL f_{!}(X)$ can be calculated as  the homotopy colimit over $T^S$ of the functor $\tilde{X}:T^S\to\Ee$ representing the $S$-algebra $X$.\end{theorem}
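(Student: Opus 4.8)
The plan is to reduce the statement to a bar-resolution computation, exploiting the explicit simplicial model of the classifier $T^S$ recalled just above. First I would observe that the derived functor $\LL f_!(X)$ is computed by replacing $X$ with a cofibrant $S$-algebra $QX$ and applying $f_!$; the standard way to produce such a resolution is the two-sided bar construction $B(\F_S, S, X)$, whose geometric realisation is a cofibrant replacement for $X$ whenever the underlying $J$-collection of $X$ is pointwise cofibrant and $\Ee$ has a good realisation functor (this is where those hypotheses are used). Then, since $f_!$ is a left adjoint it commutes with the bar construction and with realisation, so $\LL f_!(X) \simeq |B(\F_T \phi_!, S, X)|$, where I have used the commutation of the square of adjunctions between $f_!,f^*$ and $\phi_!,\phi^*$ displayed in Section~4 to rewrite $f_!\F_S = \F_T\phi_!$.

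Next I would identify the simplicial object $B(\F_T\phi_!, S, X)$ with a homotopy colimit indexed by the classifier. The key point is that the non-absolute classifier $T^S$ is \emph{itself} given (in Section~4) by exactly the truncated simplicial $T$-algebra with $n$-simplices $\F_T(\phi_!(S^n 1))$, i.e. the bar construction $B(\F_T\phi_!, S, 1)$ on the terminal $J$-collection. The representing functor $\tilde X : T^S \to \Ee$ sends an object of $T^S$ — morally a ``formal $S$-operation with $X$-labelled inputs'' — to the corresponding tensor product of components of $X$; unwinding the definition of $\tilde X$, the simplicial bar object $B(\F_T\phi_!, S, X)$ is levelwise the coproduct over $n$-simplices of $T^S$ of the value of $\tilde X$, i.e. it is precisely the simplicial replacement / Bousfield--Kan model computing $\hocolim_{T^S}\tilde X$. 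Taking realisation on both sides then gives the underlying $I$-collection of $\LL f_!(X)$ as $\hocolim_{T^S}\tilde X$, as claimed.

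Concretely the steps are: (1) recall/justify that $|B(\F_S,S,X)| \to X$ is a cofibrant replacement of $S$-algebras under the stated hypotheses, citing \cite{BB}; (2) apply $f_!$, use the adjunction square $f_!\F_S \cong \F_T\phi_!$ and the fact that a left adjoint preserves the (reflexive) bar construction and the realisation, to get $\LL f_!(X) \simeq |B(\F_T\phi_!, S, X)|$; (3) spell out the representing functor $\tilde X$ on the classifier $T^S$ and check, level by level in the simplicial direction, that $B(\F_T\phi_!, S, X)$ coincides with the simplicial object $\mathrm{srep}_\bullet(\tilde X)$ whose realisation is by definition $\hocolim_{T^S}\tilde X$ — this is the combinatorial heart, using the explicit description of the operations of $\int A$ / of $T^S$ and of the colours as $X$-labelled trees or operations; (4) conclude by realising.

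The main obstacle I expect is step~(3): making the identification of the bar simplicial object with the Bousfield--Kan simplicial replacement of $\tilde X$ over $T^S$ genuinely precise, rather than merely plausible. This requires pinning down the representing functor $\tilde X : T^S \to \Ee$ (whose existence uses the universal property of the classifier, i.e. that $T$-algebra maps out of $T^S$ classify internal $S$-algebras), and then matching the face and degeneracy maps of $B(\F_T\phi_!, S, X)$ — which come from the monad multiplication and unit of $S$ and from the $S$-action on $X$ — with the face and degeneracy maps of $\mathrm{srep}_\bullet(\tilde X)$ — which come from composing morphisms in $T^S$ and inserting identities. Both sides are a coproduct, over chains of composable operations, of tensor products of components of $X$; the finitariness and cartesianness of the polynomial monad $T$ are what guarantee these coproducts match up and that Segal's condition holds, so the bookkeeping is the real work. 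A secondary, more technical point is ensuring the realisation/homotopy-colimit comparison is compatible with the chosen ``good'' realisation functor on $\Ee$, so that $|{-}|$ on the bar object genuinely computes the homotopy colimit and not just a strict colimit; this is handled by the hypothesis on $\Ee$ together with pointwise cofibrancy of $X$, which makes the bar object Reedy cofibrant.
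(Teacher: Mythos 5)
The paper does not prove this statement; it cites it verbatim as Theorem 8.2 of \cite{BB}, so there is no in-paper argument to compare against. Your bar-resolution sketch does, however, reproduce the argument of \cite{BB} in its essentials: the three ingredients you isolate --- $|B(\F_S,S,X)|\to X$ as a cofibrant $S$-algebra replacement (good realisation plus pointwise cofibrancy of $U_S X$ giving Reedy cofibrancy of the bar object), the interchange $f_!\F_S\cong\F_T\phi_!$ obtained by taking left adjoints in the square $U_S f^* = \phi^* U_T$, and the levelwise identification of $B(\F_T\phi_!,S,X)_n=\F_T\phi_! S^n X$ with the Bousfield--Kan simplicial replacement of $\tilde X$ over the $n$-simplices $\F_T(\phi_! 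S^n 1)$ of the simplicial model of $T^S$ --- are exactly the steps in that proof. You have also correctly located the combinatorial heart in step~(3): it is the cartesianness/finitariness of the polynomial $T$ (so that $T\phi_!(-)$ distributes over the coproducts implicit in $S^n X$ and Segal's condition holds, making the length-$n$ chains of $T^S$ match the summands of $T(\phi_! S^n 1)$) that upgrades the levelwise bijection to a simplicial isomorphism compatible with faces and degeneracies. Nothing in your sketch is wrong, though to be turned into a proof one would have to actually write down the structure maps on both sides and check they agree, as you say.
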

The Theorem \ref{hocolim} has an important corollary which allows an interpretation of the nerve of a relative classifier of a map $f:S\to T$ as the value of the left derived  functor of $f_!$ on the terminal  $S$-algebra. 
\begin{corol}[\cite{BB}]\label{Nerve}The  nerve $N(T^S)$  is a cofibrant simplicial $T$-algebra. In fact,
$$N(T^S) \cong N (f_!(T^T)) \cong f_! (N(T^T)) = \LL f_{!}(1)$$ where  $1$ is the terminal simplicial $S$-algebra.\end{corol}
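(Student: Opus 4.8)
The plan is to derive Corollary \ref{Nerve} as a direct application of Theorem \ref{hocolim} to the terminal $S$-algebra $1$, combined with the fact (Proposition \ref{f_!(S^S)}) that $f_!(S^S) \cong T^S$. First I would observe that the terminal $S$-algebra $1$, considered as a simplicial $S$-algebra, has underlying $J$-collection which is pointwise the terminal simplicial set, hence pointwise cofibrant, so the hypotheses of Theorem \ref{hocolim} are satisfied (for $\Ee = \SSet$, which indeed has a good realisation functor). The functor $\tilde X : S^S \to \SSet$ representing $1$ as an internal $S$-algebra is then the constant functor at the terminal simplicial set, since the universal internal $S$-algebra $1\to S^S$ corresponds under the representability to $1$ itself. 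Therefore the homotopy colimit over $S^S$ of this constant functor is just the nerve $N(S^S)$, and by Theorem \ref{hocolim} this computes the underlying $I$-collection of $\LL f_!(1)$; but wait, I need to be careful about the target category of the classifier.

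More precisely, I would run the argument as follows. By Theorem \ref{hocolim} applied with $X = 1$ the terminal simplicial $S$-algebra, the underlying $I$-collection of $\LL f_!(1)$ is the homotopy colimit over $T^S$ of the functor $\tilde 1 : T^S \to \SSet$ representing $1$. Since the universal internal $S$-algebra in $T^S$ corresponds to the unique map $1 \to 1$ of $S$-algebras, this representing functor is the terminal functor, and its homotopy colimit is $N(T^S)$. Hence $N(T^S) \cong \LL f_!(1)$ as $I$-collections. Next, I would upgrade this to an equivalence of simplicial $T$-algebras: by Corollary \ref{Nerve}'s preceding discussion and Theorem \ref{hocolim} (or directly from the formula), $\LL f_!$ can be computed by applying $f_!$ to a cofibrant replacement of $1$; the absolute classifier $T^T$ has nerve $N(T^T) = \LL \mathrm{id}_!(1) $ which is a cofibrant simplicial $T$-algebra resolving the terminal $T$-algebra, and since the nerve functor commutes with $f_!$ (as $f_!$ is a left adjoint preserving the relevant colimits and the nerve is a simplicially-enriched colimit construction), we get $f_!(N(T^T)) \cong N(f_!(T^T)) \cong N(T^S)$ using Proposition \ref{f_!(S^S)}. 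Finally, cofibrancy of $N(T^S)$ as a simplicial $T$-algebra follows because it is $f_!$ applied to the cofibrant simplicial $T$-algebra $N(T^T)$ (left Quillen functors preserve cofibrant objects), and $N(T^T)$ is cofibrant because it is the realisation of a Reedy-cofibrant simplicial resolution built from free $T$-algebras.

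The main obstacle I anticipate is the careful identification of the representing functor $\tilde 1 : T^S \to \SSet$ with the terminal functor, and correspondingly the identification of $\hocolim_{T^S} \tilde 1$ with $N(T^S)$ rather than with some twisted version; this requires unwinding the representability statement ``$T^S$ represents $\Int_S(-)$'' at the level of the universal internal algebra and checking that the value of $\tilde 1$ at an object of $T^S$ is exactly the corresponding hom-set (a point) of the terminal simplicial $S$-algebra. The second delicate point is commuting the nerve functor past $f_!$; this is formal once one knows $f_!$ is cocontinuous and the nerve of a simplicial object in $\Alg_T$ is computed pointwise, but it should be stated carefully. I would also remark that cofibrancy of $N(T^T)$ is where the hypothesis on $\Ee$ (good realisation functor, and enough structure on $\Alg_T$) is genuinely used, referring back to \cite{BB} for the details rather than reproving it.
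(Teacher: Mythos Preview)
Your overall strategy is right, but the second paragraph contains a genuine type error that you inherited from a typo in the stated corollary. The map $f:S\to T$ induces $f_!:\Alg_S\to\Alg_T$, so the expression $f_!(N(T^T))$ is ill-formed: $N(T^T)$ is a simplicial $T$-algebra, not a simplicial $S$-algebra, and there is nothing for $f_!$ to act on. Likewise Proposition~\ref{f_!(S^S)} says $f_!(S^S)\cong T^S$, not $f_!(T^T)\cong T^S$; your invocation of it does not match its statement. The displayed chain in the corollary should read
\[
N(T^S)\ \cong\ N(f_!(S^S))\ \cong\ f_!(N(S^S))\ \simeq\ \LL f_!(1),
\]
with $S^S$ in place of $T^T$ throughout.

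Once you make this substitution, your argument goes through cleanly and is essentially the intended one: $N(S^S)$ is a cofibrant simplicial $S$-algebra (bar/codescent resolution built from free $S$-algebras) and is contractible since each component of $S^S$ has a terminal object, so $N(S^S)$ is a cofibrant replacement of the terminal $S$-algebra; then $\LL f_!(1)\simeq f_!(N(S^S))\cong N(f_!(S^S))\cong N(T^S)$, using that $f_!$ commutes with the simplicial colimits computing the nerve, and Proposition~\ref{f_!(S^S)}. Cofibrancy of $N(T^S)$ then follows because $f_!$ is left Quillen. Your first, hocolim-based argument via Theorem~\ref{hocolim} is also correct, but as you noted it only identifies the underlying $I$-collection; the algebra-level identification and the cofibrancy really do come from the $N(S^S)$ route.
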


A ``good'' realisation functor always exists if the category $\Ee$ is a simplicial model category. 
For a simplicial model category $\mathcal{M}$, we will denote by $\underline{\mathcal{M}}(X,Y)$ the simplicial hom functor between $X$ and $Y$. 
If $\Ee$ is a simplicial category then $\Alg_T(\Ee)$ is also a simplicial category for any polynomial monad $T.$  
If the transferred model category structure on $\Alg_T(\Ee)$ exists then it is also a simplicial model structure and an adjunction between categories of algebras generated by a map of polynomial monads is also a simplicial adjunction. In particular, all this is true for simplicial algebras of polynomial monads.

Recall also, that in a simplicial model category $\mathcal M$ the mapping space $Map_{\mathcal{M}}(X, Y)$ can be computed as the simplicial hom $\underline{\mathcal{M}}(cof(X),fib(Y))$ where $cof$ and $fib$ are cofibrant and fibrant replacement respectively \cite{Hirschhorn}.

\begin{theorem}[Quillen Theorem A for Polynomial monads]\label{QTAPoly}

 For any commutative diagram of maps of polynomial monads

{
	\[
		\xymatrix{
			S \ar[rr]^f \ar[rd]_h && T \ar[ld]^g \\
			& R \ar[d]_\phi \\
			& P
		}
	\]
}if $N(R^S)\to N(R^T)$ is a weak equivalence then
$N(P^S)\to N(P^T)$ is a weak equivalence. 

\end{theorem}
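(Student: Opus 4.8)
The plan is to reduce the statement to a known homotopy-invariance property of left derived functors by expressing the nerves of classifiers as derived functors applied to terminal algebras, and then using the base-change/push-forward compatibility that such derived functors enjoy. Recall from Corollary \ref{Nerve} that for a map of polynomial monads $f\colon S\to T$ one has $N(R^S)\cong \LL f_!(1)$ computed in simplicial $R$-algebras (more precisely, one should apply Proposition \ref{f_!(S^S)} in the form $R^S\cong (R\text{-algebra push-forward of }S^S)$ and take nerves), and likewise for $g\colon S\to R$, $h$, etc. The key point is that the composite $\phi\circ g\colon R\to P$ induces a Quillen pair $(\phi_!,\phi^*)$ between simplicial algebras, and that $\LL\phi_!$ applied to the cofibrant simplicial $R$-algebra $N(R^S)$ recovers $N(P^S)$, by the functoriality of classifiers under composition (Proposition \ref{square} and the pushout/colimit preservation of the classifier functor in the Corollary following it), together with Corollary \ref{Nerve}.

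First I would set up the notation: write $r\colon S\to R$ and $t\colon T\to R$ for the structure maps ($r=h$, $t=g$ in the diagram) and $p\colon R\to P$ for $\phi$, so that the maps $S\to P$ and $T\to P$ factor through $R$. Using Corollary \ref{Nerve} I would identify $N(R^S)\simeq \LL r_!(1_S)$, $N(R^T)\simeq \LL t_!(1_T)$ as cofibrant simplicial $R$-algebras, where $1_S$, $1_T$ are the terminal simplicial algebras of $S$, $T$. Since the map $f\colon S\to T$ satisfies $t\circ f=r$ and sends $1_S\to f^*(1_T)$ (the unique such map), the induced map $N(R^S)\to N(R^T)$ is precisely $\LL r_!(1_S)\to \LL r_!(1_T)=\LL t_!\LL f_!(1_S)\to \LL t_!(1_T)$, i.e.\ $\LL t_!$ applied to the counit-type comparison $\LL f_!(1_S)\to 1_T$ at the level of $R$-algebras; this is the hypothesis that it is a weak equivalence. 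Now applying $\LL p_!$ and using $\LL p_!\circ\LL r_! = \LL(p\circ r)_!$ and $\LL p_!\circ\LL t_! = \LL(p\circ t)_!$ (composability of left derived functors of left Quillen functors, valid because all these push-forwards are left Quillen and the objects involved are cofibrant), together with Corollary \ref{Nerve} again for the maps $S\to P$ and $T\to P$, I get $N(P^S)\simeq \LL p_!(N(R^S))$ and $N(P^T)\simeq \LL p_!(N(R^T))$, and the map between them is $\LL p_!$ applied to the weak equivalence $N(R^S)\to N(R^T)$ between cofibrant objects, hence a weak equivalence by Ken Brown's lemma.

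The main obstacle I anticipate is justifying that $N(P^S)\cong \LL p_!\big(N(R^S)\big)$ as simplicial $P$-algebras — that is, that the relative classifier is itself compatible with further push-forward along $\phi$, not merely that its nerve computes a derived functor on terminal algebras. This should follow from Proposition \ref{f_!(S^S)}: $P^S\cong p_!(R^S)$ as categorical $P$-algebras (apply the proposition twice, or once with $f$ the composite $S\to P$ using $R^S\cong r_!(S^S)$ and $P^S\cong (p\circ r)_!(S^S)\cong p_!r_!(S^S)\cong p_!(R^S)$), and then one needs that taking nerves commutes with $p_!$ up to the required homotopical precision. Since $N(R^S)=N(r_!(S^S))\cong r_!(N(S^S))$ is already cofibrant as a simplicial $R$-algebra (Corollary \ref{Nerve}), the genuine left Quillen functor $p_!$ computes its own derived functor on it, so $\LL p_!(N(R^S))\cong p_!(N(R^S))\cong p_!r_!(N(S^S))\cong (p\circ r)_!(N(S^S))\cong N(P^S)$; the only care needed is that nerve commutes with the strict push-forward $p_!$, which holds because $p_!$ is computed by a reflexive coequalizer that nerve (a right adjoint on the underlying collection level, but commuting with the relevant colimits here) preserves — essentially the same argument already used to prove Corollary \ref{Nerve}. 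Once this identification is in place, the rest is the formal Ken Brown argument and poses no difficulty.
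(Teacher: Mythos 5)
Your proof is correct and takes essentially the same route as the paper's: both identify $N(P^S)\cong\phi_!(N(R^S))$ and $N(P^T)\cong\phi_!(N(R^T))$ via Proposition \ref{f_!(S^S)} and Corollary \ref{Nerve}, use that these nerves of classifiers are cofibrant, and conclude from the fact that $\phi_!$ is left Quillen. The only cosmetic difference is in the final step: you invoke Ken Brown's lemma directly, whereas the paper tests the map against an arbitrary fibrant simplicial $P$-algebra $X$ via the simplicial hom adjunction $\underline{\Alg_P}(\phi_!(-),X)\cong\underline{\Alg_R}(-,\phi^*X)$ — these are two standard formulations of the same fact that a left Quillen functor preserves weak equivalences between cofibrant objects. (One small slip in your write-up: the expression ``$\LL r_!(1_T)$'' is meaningless since $1_T$ is not an $S$-algebra; you clearly meant $\LL t_!(1_T)$, as the surrounding text shows, and this does not affect the argument.)
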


\begin{proof} We have
$$N(P^T) \cong (g\circ \phi)_!(N(T^T)) \cong \phi_!(N(g_!(T^T)) \cong  \phi_!(N(R^T)).$$ 
If $X$ is a fibrant simplicial $P$-algebra then the induced morphism of simplicial homs
$$\underline{\Alg_P}(N(P^S),X)\leftarrow \underline{\Alg_P}(N(P^T),X)$$
is isomorphic to 
$$\underline{\Alg_P}(\phi_!(N(R^S)),X)\leftarrow \underline{\Alg_P}(\phi_!(N(R^T)),X)$$
and by adjunction to
$$\underline{\Alg_R}(N(R^S)),\phi^*X)\leftarrow \underline{\Alg_R}(N(R^T),\phi^*X).$$ 
Since $\phi^*(X)$ is fibrant and $N(R^S)$ and $N(R^T)$ are cofibrant $R$-algebra, the last map is a weak equivalence.  So, $N(P^S)\to N(P^T)$ is a weak equivalence as well.
\end{proof} 

\begin{corol}[Classical Quillen Theorem A]  If in a commutative triangle of small categories

{
	\[
	\xymatrix{
		S \ar[rr]^f \ar[rd]_h && T \ar[ld]^g \\
		& R
	}
	\]
}$f$ induces a weak equivalence $N(h/r)\to N(g/r)$ for any object $r\in R$ then $N(f):N(S)\to N(T)$ is a weak equivalence.

\end{corol}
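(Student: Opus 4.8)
The plan is to deduce the classical statement from Theorem~\ref{QTAPoly} by taking the bottom polynomial monad $P$ to be the terminal category $1$. Via the full embedding $\Cat\hookrightarrow\mathrm{PMon}$ recalled in Section~2, the given commutative triangle of functors $f,g,h$ together with the unique functor $\phi\colon R\to 1$ assembles into a commutative diagram of polynomial monads of exactly the shape required by Theorem~\ref{QTAPoly}. So everything reduces to matching up the hypotheses and conclusions under the dictionary between small categories and polynomial monads.

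First I would identify the relevant classifiers. As recalled in the text, for a functor $F\colon S\to R$ of small categories the classifier $R^S$ is the covariant $\Cat$-valued presheaf on $R$ sending $r\in R$ to the comma category $F/r$; applied to $h$ and to $g$ this gives $R^S(r)=h/r$ and $R^T(r)=g/r$, and the map $R^S\to R^T$ induced by $f$ is at each $r$ the canonical functor $h/r\to g/r$. Hence $N(R^S)\to N(R^T)$ is the morphism of simplicial presheaves on $R$ which at each object $r$ is the map $N(h/r)\to N(g/r)$. In the transferred (projective) model structure on simplicial $R$-algebras — which, $R$ being an ordinary category, are just simplicial presheaves on $R$ — weak equivalences are detected objectwise; therefore the hypothesis that every $N(h/r)\to N(g/r)$ is a weak equivalence is precisely the statement that $N(R^S)\to N(R^T)$ is a weak equivalence of simplicial $R$-algebras.

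Next I would compute the classifiers over $P=1$. The comma category $(S\to 1)/\ast$, where $\ast$ is the unique object of $1$, has as objects the objects of $S$ (each with its unique map to $\ast$) and as morphisms the morphisms of $S$; thus $1^S\cong S$ and likewise $1^T\cong T$ as categorical algebras of $1$, i.e. as plain small categories, with the induced functor $1^S\to 1^T$ being $f$. Consequently $N(1^S)=N(S)$, $N(1^T)=N(T)$, and the map between them is $N(f)$. Now Theorem~\ref{QTAPoly} applied to the diagram $S\to T\to R\to 1$ says that since $N(R^S)\to N(R^T)$ is a weak equivalence, so is $N(1^S)\to N(1^T)$; unwinding the identifications above, this is exactly the assertion that $N(f)\colon N(S)\to N(T)$ is a weak equivalence.

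I do not expect a genuine obstacle here: the real content of classical Theorem~A has been absorbed into Theorem~\ref{QTAPoly} (together with Corollary~\ref{Nerve}, which underlies the computation of the nerves of classifiers via derived left Kan extensions). The only things that need to be checked are the two identifications of classifiers with comma categories and with $S,T$ themselves, both of which are direct instances of facts already stated in the excerpt; and the mild bookkeeping point that a weak equivalence of simplicial $R$-algebras is the same as an objectwise weak equivalence of simplicial sets, which holds because $R$ is an ordinary small category. The most one might want to spell out carefully is that the functoriality in the triangle is preserved by the embedding $\Cat\hookrightarrow\mathrm{PMon}$, but this is immediate since that embedding is full and faithful and sends functors to cartesian monad maps.
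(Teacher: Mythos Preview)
Your proposal is correct and follows essentially the same approach as the paper: view the triangle of small categories as a triangle of polynomial monads, identify the classifiers $R^S$ and $R^T$ with the comma-category presheaves $r\mapsto h/r$ and $r\mapsto g/r$, take $P=1$ so that $1^S\cong S$ and $1^T\cong T$, and apply Theorem~\ref{QTAPoly}. Your write-up is slightly more explicit than the paper's (in particular, you spell out that the objectwise weak equivalence hypothesis matches the projective model structure on simplicial presheaves), but the argument is the same.
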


\begin{proof} Consider this commutative triangle as commutative triangle of morphisms between polynomial monads. The maps of comma categories $h/r\to g/r$ are the components of map of classifiers $R^S\to R^T.$

 Take $P=1,$  the terminal category,  and $\phi:R\to P$ the unique functor.    Then we can apply   Theorem \ref{QTAPoly}. $N(f)$ is exactly the nerve of the map between classifiers $1^S \to 1^T.$

\end{proof}

\begin{corol}\label{QuillenA} Let $f:S\to T$ be a map of polynomial monads. The following statements are equivalent:
\begin{enumerate} 

\item {$N(T^S)$} is contractible;
\item For any commutative triangle of maps of polynomial monads

	\begin{equation}\label{triangle}
	\xymatrix{
		S \ar[rr]^f \ar[rd]_h && T \ar[ld]^g \\
		& R}
	\end{equation}

the morphism $N(R^f):N(R^S)\to N(R^T)$ is a  weak equivalence.

\end{enumerate}

\end{corol}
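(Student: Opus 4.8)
The plan is to prove the equivalence of the two statements in Corollary \ref{QuillenA} by using Theorem \ref{QTAPoly} in both directions, together with the adjunction between the classifier functor and the Grothendieck construction (and the functorial description $N(R^S) \cong h_!(N(S^S)) = h_!(N(R^R) \text{ restricted appropriately})$, more precisely $N(R^S) \cong h_!(N(S^S))$ via Proposition \ref{f_!(S^S)} and Corollary \ref{Nerve}).

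For the implication (1) $\Rightarrow$ (2), I would argue as follows. Given the commutative triangle (\ref{triangle}), apply Theorem \ref{QTAPoly} with $P = R$ and $\phi = \mathrm{id}_R$: the hypothesis of that theorem with the identity at the bottom says that if $N(R^S) \to N(R^T)$ is a weak equivalence then... but that is circular, so instead I should run the argument of Theorem \ref{QTAPoly}'s proof more carefully. The cleanest route: by Corollary \ref{Nerve}, $N(R^S) \cong \LL h_!(1)$ where $1$ is the terminal simplicial $S$-algebra, and $N(R^T) \cong \LL g_!(1)$ where $1$ is the terminal simplicial $T$-algebra. Now $\LL h_! \cong \LL g_! \circ \LL f_!$ since $h = g \circ f$. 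The terminal $S$-algebra $1$ satisfies $\LL f_!(1) = N(T^S)$ by Corollary \ref{Nerve}. So $N(R^S) \cong \LL g_!(N(T^S))$ and $N(R^T) \cong \LL g_!(N(T^T)) = \LL g_!(1)$. Since $N(T^S)$ is contractible by hypothesis (1), and the terminal $T$-algebra $1 = N(T^T)$ is also contractible (indeed $T^T$ has terminal objects $1_i$ in each component so its nerve is contractible), the map $N(T^S) \to N(T^T)$ is a weak equivalence between cofibrant simplicial $T$-algebras; applying the left Quillen functor $g_!$ preserves this weak equivalence, giving $N(R^S) \to N(R^T)$ a weak equivalence, which is $N(R^f)$.

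For the implication (2) $\Rightarrow$ (1), simply specialise the triangle (\ref{triangle}) by taking $R = T$ and $g = \mathrm{id}_T$, $h = f$. Then $R^S = T^S$ and $R^T = T^T$, so statement (2) says $N(T^S) \to N(T^T)$ is a weak equivalence. Since $N(T^T)$ is contractible (the canonical internal algebra $1 \to T^T$ has components that are terminal objects, so each $(T^T)_i$ has a terminal object and hence contractible nerve), we conclude $N(T^S)$ is contractible, which is (1).

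The main obstacle, or rather the main point requiring care, is making precise that $N(T^T)$ is contractible and that left Quillen functors preserve weak equivalences between cofibrant objects — both are standard but should be invoked explicitly. The contractibility of $N(T^T)$ follows from the remark after the absolute classifier discussion: the component $1_i \in (T^T)_i$ is a terminal object, so each category $(T^T)_i$ has a terminal object and its nerve is contractible. The preservation statement is Ken Brown's lemma applied to the left Quillen functor $g_!: \Alg_S \to \Alg_T$ (or rather $g_! \colon \Alg_T \to \Alg_R$ in the $(1)\Rightarrow(2)$ direction), restricted to cofibrant objects; the nerves $N(T^S)$ and $N(T^T)$ are cofibrant simplicial algebras by Corollary \ref{Nerve}. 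Everything else is bookkeeping with the identities $\LL h_! \cong \LL g_! \LL f_!$ and Corollaries \ref{Nerve} and \ref{QuillenA}'s ingredients.
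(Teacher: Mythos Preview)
Your proof is correct and follows essentially the same approach as the paper. For $(2)\Rightarrow(1)$ you argue identically; for $(1)\Rightarrow(2)$ the paper simply invokes Theorem~\ref{QTAPoly} (applied to the triangle $S\xrightarrow{f} T\xleftarrow{\mathrm{id}} T$ with $\phi=g:T\to R$), whereas you unpack that invocation directly via $N(R^S)\cong g_!(N(T^S))$, $N(R^T)\cong g_!(N(T^T))$, and Ken Brown's lemma --- this is exactly the content of the proof of Theorem~\ref{QTAPoly}, so the two arguments are the same in substance.
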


\begin{proof} 

\noindent (1) $\to$ (2).  

The triangle above can be rewritten as a commutative diagram

{
	\[
	\xymatrix{
		S \ar[rr]^f \ar[rd]_f && T \ar[ld]^{id} \\
		& T \ar[d]^g \\
		& R
	}
	\]
}We have $N(T^f): N(T^S)\to N(T^T)$ is a weak equivalence since $T^T$ has a terminal object. 
By Theorem \ref{QTAPoly},  $N(R^S)\to N(R^T)$ is a weak equivalence.

\noindent (2) $\to$ (1).  Take $R=T$ and $g = id:T\to T.$ Then $N(T^f): N(T^S)\to N(T^T)$ is a weak equivalence implies that $N(T^S)$ is contractible.
\end{proof}

This result justifies the following definition.

\begin{defin} A cartesian map $f: S\to T$  between polynomial monads is called homotopically cofinal if $N(T^{S})$ is contractible.

\end{defin} 

A well known classical characterisation of  cofinal functors asserts that these are exactly the functors restriction along which  preserves  limits. We are going to provide a similar characterisation of homotopy cofinal maps between polynomial monads.

\begin{theorem} 
For a commutative triangle of polynomial monads (\ref{triangle}) the following statements are equivalent:

 \begin{enumerate}
 \item The map $N(R^f):N(R^S)\to N(R^T)$ is a weak equivalence.
 \item For  any simplicial $R$-algebra $X$
the morphism $f$ induces a weak equivalence of homotopy mapping spaces
$$Map_{Alg_S}(1,h^*X) \to Map_{Alg_T}(1,g^*X).$$ 
Here $1$ means the terminal simplicial algebra.

\end{enumerate}

\end{theorem}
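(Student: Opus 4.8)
The plan is to reduce everything to the key computational fact $N(P^S)\cong \phi_!(N(R^S))$ established in the proof of Theorem~\ref{QTAPoly}, together with Corollary~\ref{Nerve}, which identifies $N(R^S)$ with $\LL h_!(1)$ and $N(R^T)$ with $\LL g_!(1)$, and the fact that nerves of classifiers are cofibrant simplicial algebras. With these identifications in hand, for a fixed simplicial $R$-algebra $X$ the mapping space $Map_{Alg_R}(N(R^S),X)$ computes $Map_{Alg_S}(1,h^*X)$ by adjunction (derived, since $N(R^S)$ is cofibrant and we may take a fibrant replacement of $X$), and similarly $Map_{Alg_R}(N(R^T),X)$ computes $Map_{Alg_T}(1,g^*X)$. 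So the statement becomes: $N(R^f)$ is a weak equivalence of cofibrant $R$-algebras if and only if it induces a weak equivalence on $Map_{Alg_R}(-,X)$ for every simplicial $R$-algebra $X$.

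First I would make the simplicial-model-category setup explicit: work in $\Ee=\SSet$, so $\Alg_R(\SSet)$ carries the transferred simplicial (semi-)model structure, and recall from the paragraph preceding the theorem that $Map_{\Alg_R}(Y,X)\cong\underline{\Alg_R}(cof(Y),fib(X))$. Since $N(R^S)$ and $N(R^T)$ are already cofibrant by Corollary~\ref{Nerve}, we may drop the cofibrant replacement and only replace $X$ by a fibrant model $\hat X$; moreover $h^*\hat X$ and $g^*\hat X$ are fibrant because restriction functors between algebra categories are right Quillen. Then the adjunction $h_!\dashv h^*$ (a simplicial adjunction) gives the isomorphism of simplicial sets
\[
\underline{\Alg_R}(N(R^S),\hat X)\;\cong\;\underline{\Alg_S}\bigl(1,h^*\hat X\bigr),
\]
using $N(R^S)\cong \LL h_!(1)=h_!(1_{\text{cof}})$ with $1$ already cofibrant, and likewise on the $T$ side; this realizes both mapping spaces in the statement as $Map_{\Alg_R}(-,X)$ applied to $N(R^S)\to N(R^T)$.

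For the direction (1)$\Rightarrow$(2): a weak equivalence between cofibrant objects is sent by any right Quillen bifunctor — here $\underline{\Alg_R}(-,\hat X)$ with $\hat X$ fibrant — to a weak equivalence of simplicial sets (Ken Brown's lemma, or the standard enriched-model-category argument), which is exactly the claimed weak equivalence of mapping spaces. For (2)$\Rightarrow$(1): one tests against a well-chosen $X$. The cleanest choice is to take $X$ a fibrant replacement of $N(R^T)$ itself (or, to be safe in the semi-model setting, to take $X$ ranging over all fibrant algebras and invoke the Yoneda-type fact that a map $u\colon A\to B$ between cofibrant-fibrant objects inducing a weak equivalence $Map(B,X)\to Map(A,X)$ for all fibrant $X$ is itself a weak equivalence — proved by taking $X=$ fibrant replacement of $B$ and of $A$ and using the two-out-of-three / homotopy-category Yoneda lemma). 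Either way, $N(R^f)$ becomes an isomorphism in the homotopy category $\mathrm{Ho}(\Alg_R)$, hence a weak equivalence.

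\emph{Main obstacle.} The delicate point is the passage between the derived and strict mapping spaces, and carrying it through cleanly in the possibly-only-semi-model-category $\Alg_R(\SSet)$: one must be sure that (i) $1$ is cofibrant as a simplicial $S$- and $T$-algebra (it is, being a point in each colour, but this uses the structure of the polynomial monad), (ii) $h^*$ preserves fibrant objects and $h_!$ preserves cofibrations (right/left Quillen), and (iii) the enriched adjunction identity $\underline{\Alg_R}(h_!Y,X)\cong\underline{\Alg_S}(Y,h^*X)$ is honest at the simplicial-set level. Granting the simplicial-model-category framework set up just before the theorem, all three are standard, so the real content is simply the combination of Corollary~\ref{Nerve} with the right-Quillen-bifunctor property of the simplicial hom; there is no genuinely new homotopical input beyond Theorem~\ref{QTAPoly} and its corollary.
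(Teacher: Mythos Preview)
Your proof is correct and follows essentially the same route as the paper: identify both mapping spaces with $\underline{\Alg_R}(N(R^S),\hat X)$ and $\underline{\Alg_R}(N(R^T),\hat X)$ via Corollary~\ref{Nerve} and the simplicial Quillen adjunction $h_!\dashv h^*$, then use that simplicial hom out of cofibrant into fibrant detects weak equivalences. The paper's own argument for the converse direction is literally ``It is obvious that we can reverse these calculations''; your Yoneda-style justification (test against a fibrant replacement of $N(R^T)$) makes this explicit. Two minor remarks: the opening reference to the formula $N(P^S)\cong\phi_!(N(R^S))$ from Theorem~\ref{QTAPoly} is unnecessary here, and your parenthetical claim that the terminal algebra $1$ is already cofibrant need not hold in general --- but you do not actually use it, since Corollary~\ref{Nerve} hands you $N(R^S)$ directly as a cofibrant model of $\LL h_!(1)$.
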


\begin{proof}

 From the beginning we can assume that $X$ is a fibrant $R$-algebra. We can compute the mapping space 
$Map_{\Alg_S}(1,h^*X)$ as the simplicial hom 
$\underline{Alg_S}(cof(1),h^*X)$ where $cof(1)$ is a cofibrant replacement for the terminal algebra $1.$ By adjunction  this space is isomorphic to $ \underline{Alg_R}(\LL h_! (1), X).$  
By Corollary \ref{Nerve}  $\LL h_! (1) \cong N(R^S).$ Then $$\underline{\Alg_R}(\LL h_! (1), X)
\sim \underline{\Alg_R}(N(R^S), X)$$ 
Similarly 
$$\underline{\Alg_R}(g_!(cof(1)), X) \sim \underline{\Alg_R}({N(R^T)},X).$$ 
Hence,  $N(R^f)$ induces a weak equivalence between these simplicial sets.

It is obvious that we can reverse these calculations and so prove that $N(R^f)$ is a weak equivalence provided it induces a weak equivalence between mapping spaces for all $R$-algebra $X.$ 
\end{proof}

\begin{corol}\label{weakequivalencemappingspaces} A cartesian map between polynomial monads $f:S\to T$ is homotopically cofinal if and only if for any simplicial $T$-algebra $X$ it induces a weak equivalence between mapping spaces:
$$Map_{\Alg_S}(1,f^*X)\to Map_{\Alg_T}(1, X).$$

\end{corol}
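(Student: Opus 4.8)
The plan is to obtain the corollary as the special case $R=T$, $g=\mathrm{id}_T$, $h=f$ of the preceding theorem. With these choices the commutative triangle (\ref{triangle}) is just $S\xrightarrow{f}T\xrightarrow{\mathrm{id}}T$, and the induced map of classifiers $N(R^f)$ becomes $N(T^f):N(T^S)\to N(T^T)$. Statement (2) of that theorem then reads: for every simplicial $T$-algebra $X$ the map $Map_{\Alg_S}(1,f^*X)\to Map_{\Alg_T}(1,X)$ is a weak equivalence, which is precisely the condition appearing in the corollary. So everything reduces to identifying statement (1), namely that $N(T^f)$ is a weak equivalence, with homotopy cofinality of $f$.

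For that I would invoke the observation made right after the simplicial formula for the absolute classifier: $T^T$ carries a universal internal $T$-algebra $1\to T^T$ whose $i$-th component $1_i$ is a terminal object of the category $(T^T)_i$. Hence each category $(T^T)_i$ has a terminal object, its nerve is contractible, and therefore the $I$-collection $N(T^T)$ is pointwise contractible. Consequently any map of simplicial $T$-algebras with target $N(T^T)$ is a (pointwise) weak equivalence exactly when its source is pointwise contractible; applying this to $N(T^f)$ shows that $N(T^f):N(T^S)\to N(T^T)$ is a weak equivalence if and only if $N(T^S)$ is contractible, i.e. if and only if $f$ is homotopically cofinal in the sense of the definition above. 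Equivalently, one can quote Corollary \ref{QuillenA} with $R=T$, $g=\mathrm{id}$ for the same conclusion.

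Combining the two observations yields the claimed equivalence. There is no genuine obstacle here: the argument is a direct specialisation of the theorem, and the only point requiring (minimal) care is to make sure the weak equivalences are read pointwise over the colour set $I$ — which is automatic, since the (semi-)model structure on $\Alg_T$ is transferred from the projective model structure on $\Ee^I$, and the definition of homotopical cofinality already refers to pointwise contractibility of $N(T^S)$.
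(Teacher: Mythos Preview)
Your proposal is correct and is exactly the intended argument: the paper gives no explicit proof for this corollary, treating it as immediate from the preceding theorem specialised to $R=T$, $g=\mathrm{id}$, together with the contractibility of $N(T^T)$ (equivalently, Corollary~\ref{QuillenA}). Your added remark about reading weak equivalences pointwise is fine and already implicit in the paper's conventions.
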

Recall that {\it homotopy left cofinal functor} is defined in \cite{Hirschhorn}[Definition 19.6.1] as a functor 
between small categories $f:S\to T$ such that  $N(f/i)$ is contractible for all objects $i\in I.$    
It coincides with our notion of homotopically cofinal map between polynomial monads when the latter is specialised to small categories. 

\begin{corol}[Theorem 19.6.13(b) \cite{Hirschhorn}] A functor $f:S\to T$ between small categories is homotopically left cofinal if and only if
for any functor $X:T\to SSet$ it induces a weak equivalence
$$\holim_S f^*X \to \holim_T X .$$ 
\end{corol}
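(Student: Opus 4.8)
The final statement to prove is the classical characterisation of homotopy left cofinal functors (Hirschhorn 19.6.13(b)): a functor $f:S\to T$ between small categories is homotopically left cofinal if and only if for every $X:T\to SSet$ the induced map $\holim_S f^*X \to \holim_T X$ is a weak equivalence.

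The plan is to deduce this directly from Corollary \ref{weakequivalencemappingspaces} by specialising the polynomial monad machinery to small categories. First I would recall that, as noted just before the statement, homotopy left cofinality of $f$ in the sense of Hirschhorn (i.e. $N(f/i)$ contractible for all $i$) coincides with homotopical cofinality of $f$ viewed as a cartesian map of polynomial monads, since the comma categories $f/i$ are precisely the components of the classifier $T^S$ (this is the Example right after Corollary \ref{sections}, together with the identification $(T^S)(i)=f/i$ from the Example in Section 4). So it suffices to translate the mapping-space statement of Corollary \ref{weakequivalencemappingspaces} into the homotopy-limit statement.

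The key translation is the identification of the homotopy mapping space from the terminal algebra with a homotopy limit. For a small category $T$, the category of $T$-algebras in $SSet$ is the functor category $SSet^T$, and the terminal algebra $1$ is the constant functor at a point. One then uses the standard fact (again from \cite{Hirschhorn}) that for any $X:T\to SSet$ the homotopy limit $\holim_T X$ is weakly equivalent to the derived simplicial hom $\underline{SSet^T}(\mathrm{cof}(1),\mathrm{fib}(X))$, i.e. to $Map_{SSet^T}(1,X)$; concretely, $\mathrm{cof}(1)$ can be taken to be the nerve-of-overcategory resolution $N(-/ T)$, which is exactly $N(T^T)$ by the Example in Section 4, matching Corollary \ref{Nerve}. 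Under this identification the restriction functor $f^*:SSet^T\to SSet^S$ corresponds on terminal algebras to the obvious thing, and the map $Map_{\Alg_S}(1,f^*X)\to Map_{\Alg_T}(1,X)$ becomes precisely the canonical comparison $\holim_S f^*X\to \holim_T X$. Applying Corollary \ref{weakequivalencemappingspaces} with this dictionary then gives the equivalence of the two conditions.

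The main obstacle is purely bookkeeping rather than conceptual: one must check that the abstract comparison map produced by Corollary \ref{weakequivalencemappingspaces} (built from the counit of the $f_!\dashv f^*$ adjunction and the cofibrant resolutions $N(T^S)\to 1$, $N(T^T)\to 1$) really is the standard homotopy-limit comparison map $\holim_S f^*X\to \holim_T X$ up to the usual zig-zag of weak equivalences, and not merely some abstractly equivalent map. This amounts to comparing the Bousfield–Kan model for $\holim$ with the derived simplicial hom out of the nerve resolution, which is well documented in \cite{Hirschhorn}; once that identification is in place the corollary is immediate. I would therefore phrase the proof as: specialise $\Ee=SSet$ and $T,S$ to small categories in Corollary \ref{weakequivalencemappingspaces}, note $\Alg_T(SSet)=SSet^T$ with terminal algebra the constant point-functor, invoke the standard identification $Map_{SSet^T}(1,X)\simeq \holim_T X$ and the fact that the classifier resolution $N(T^T)$ is Hirschhorn's standard cofibrant replacement of $1$, and conclude.
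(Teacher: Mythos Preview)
Your proposal is correct and follows essentially the same approach as the paper: the paper's proof is a single sentence stating that the homotopy limit of $X$ can be computed as the mapping space from the terminal functor to $X$, and then implicitly invokes Corollary~\ref{weakequivalencemappingspaces}. You have simply unpacked this one-line argument carefully, making explicit the identification of the classifier components with the comma categories $f/i$, the identification $Map_{SSet^T}(1,X)\simeq \holim_T X$, and the bookkeeping that the abstract comparison map agrees with the standard one---all of which the paper leaves to the reader.
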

\begin{proof} The homotopy limit of a functor  $X$ can be computed as a mapping space from terminal functor to $X.$
\end{proof}
\begin{remark} This theorem (and its dual) is proved in \cite{Hirschhorn} in a slightly more general setting. We are not going to pursue this generality in this paper but it is not hard to get Hirschhorn's Theorem from 
the corollary above and the model theoretic argument from \cite{BB}. 
\end{remark} 

\begin{remark} If $\mathcal{W}$ is a fundamental localizer of Grothendieck \cite{cis06} then all the results of this section can be localized with respect to $\mathcal{W}.$ In particular, if we take $\mathcal{W} = \mathcal{W}_0$ (so the weak equivalences between small categories become functors which induce isomorphisms on $\pi_0$) the notion of $\mathcal{W}_0$-homotopically cofinal functor coincides with the classical categorical notion of cofinal functor. 
\end{remark}
\begin{remark} In \cite{BatStab} the first author used the fact that the map of polynomial monads $Op_n \to SOp$ is a $\mathcal{W}_{n-2}$-homotopically cofinal functor to prove the Baez-Dolan stabilization hypothesis for Rezk's weak $n$-categories.  Here, $Op_n$ is the polynomial monads for $n$-operads, $SOp$ is the polynomial monad for symmetric operads and $\mathcal{W}_{n-2}$ is the fundamental localizer for $(n-2)$-truncated homotopy type.   
\end{remark}

\section{Twisted Boadrman-Vogt tensor product~  \\  and Thomason's theorem}\label{BVP}
 
 Since polynomial monads form a $2$-category we can ask about lax-colimits of a diagram in $\mathbf{PMon}.$ Such a lax-colimit can be given explicitly as another version of Grothendieck construction. 
 
 Let $A$ be a small category and $F:A\to \mathbf{PMon}$ be a strict $2$-functor. We then define a new polynomial monad $\oint F$ as follows. The set of its colours is the set of pairs $(a,i)$ where $a\in A$ and $i\in F(a)_0$ is a colour of the polynomial monad $F(a).$ An operation consists of the following data:
 \begin{enumerate}
 \item An operation $\beta\in F(b)$
 \item A family of  morphisms $\alpha_d: a_d\to b$ in $A,$ where $d$ runs over the set $p_b^{-1}(\beta)$ and where $p_b$ is the middle map of the polynomial monad $F(b)$ ; 
 \item A family of colours of  $j_d\in F(a_d), d\in p^{-1}(\beta)$ such that   
$F \alpha_d(j_d) = s_b(d), d\in  p_b^{-1}(\beta),$ where $s_b$ is the source map of the monad $F(b).$  
 \end{enumerate} 
A marked operation is the operation $\beta$ with a marked element $d\in p^{-1}(\beta)$ and the middle map forgets the marking. The source map assigns to a marked operation  the pair $(a_d,j_d),$ where $d$ is the marked element.  The target of an operation $\beta$ as above is the pair $(b,t(\beta)).$
The unit and composition operation of the polynomial monad $\oint{F}$ are obvious now.
 In order to distinguish this construction from Grothendieck construction of a categorical algrebra we will call the polynomial monad $\oint F$ {\it twisted Boardman-Vogt tensor product.}

A  relation of twisted Boardman-Vogt product with the classical  Grothendieck construction is given by the following:  
 \begin{proposition} Let $\Ee$ be a  symmetric monoidal category. The category of algebras of $\oint F$ is isomorphic to the category of sections of the Grothendieck construction of the functor
$$Alg_F(\Ee) : A^{op} \to CAT$$
which associates to an $a\in A$ the category $Alg_{F(a)}(\Ee)$ and to a morphism $f:a\to b$  the functor
$F(f)^*:Alg_{F(b)}(\Ee)\to Alg_{F(a)}(\Ee).$ 

That is an algebra $X$ of $\oint F$ consists of a family of $F(a)$-algebras $X(a), a\in A$ together with family of $F(a)$-morphisms $\phi(a):X(a)\to F(f)^* X(b)$  for each $f:a\to b$ 
which satisfies obvious functoriality conditions. 

\end{proposition}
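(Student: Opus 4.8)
The plan is to unwind both sides of the claimed isomorphism and exhibit a bijection between the data that underlie an algebra of $\oint F$ and the data of a section of the Grothendieck construction, and then check that this bijection is compatible with morphisms. First I would spell out what an algebra $X$ of the polynomial monad $\oint F$ is in the sense of the explicit description of algebras given in Section 2: it is a collection $X_{(a,i)}\in\Ee$ indexed by colours $(a,i)$, together with structure maps
$$m_{(\beta,\vec\alpha,\vec j;\sigma)}: X_{(a_{\sigma(1)},j_{\sigma(1)})}\otimes\cdots\otimes X_{(a_{\sigma(k)},j_{\sigma(k)})}\to X_{(b,t(\beta))}$$
for each operation of $\oint F$ (an operation being a triple $(\beta,\vec\alpha,\vec j)$ as in the definition) and each ordering $\sigma$ of the fibre $p_b^{-1}(\beta)$, subject to associativity, unitarity and equivariancy. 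The key observation is that such a family of operations decomposes: fixing $a\in A$ and restricting to those operations in which every $\alpha_d=\mathrm{id}_a$ recovers exactly an $F(a)$-algebra structure on the collection $X(a):=\{X_{(a,i)}\}_{i\in F(a)_0}$, while the operations involving a single non-identity arrow $f:a\to b$ (and the identity operation $\beta$ at $b$ suitably interpreted) encode precisely an $F(a)$-algebra map $\phi(f):X(a)\to F(f)^*X(b)$.

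Next I would check that the general operations of $\oint F$ are generated, under the monad composition, by these two special kinds — operations internal to a single $F(a)$ and the "transport" operations along single morphisms of $A$ — because any operation $(\beta,\vec\alpha,\vec j)$ factors as the composite of the $F(b)$-operation $\beta$ (all arrows identities at $b$) precomposed with the transport operations along the $\alpha_d$; this is immediate from the composition rule for $\oint F$, which glues an operation $\beta$ of $F(b)$ with operations pulled back along the $\alpha_d$. Consequently the associativity and equivariancy axioms for an $\oint F$-algebra translate into: (i) each $X(a)$ is an $F(a)$-algebra; (ii) each $\phi(f)$ is a morphism of $F(a)$-algebras into $F(f)^*X(b)$; and (iii) for composable $f:a\to b$, $g:b\to c$ one has $F(f)^*(\phi(g))\circ\phi(f)=\phi(gf)$ and $\phi(\mathrm{id}_a)=\mathrm{id}_{X(a)}$ — which is exactly the data of a section of the functor $\mathrm{Alg}_F(\Ee):A^{\mathrm{op}}\to \mathrm{CAT}$, i.e.\ the Grothendieck construction applied to it. Unitarity of the $\oint F$-algebra gives the normalisation $\phi(\mathrm{id})=\mathrm{id}$ and the unit axioms inside each $F(a)$.

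Finally I would verify that a morphism of $\oint F$-algebras $X\to Y$ is the same as a family of $F(a)$-algebra morphisms $X(a)\to Y(a)$ commuting with the $\phi$'s, which is precisely a morphism of sections; this is a routine diagram chase once the object-level correspondence is set up. The main obstacle I anticipate is purely bookkeeping: being careful with the orderings $\sigma$ of the fibres $p_b^{-1}(\beta)$ and the induced orderings on the pulled-back fibres, so that the equivariancy conditions on both sides match up without introducing spurious sign-like data — in other words, making precise the claim that every operation factors (uniquely, up to the equivariancy identifications) as a single "vertical" $F(b)$-operation composed with "horizontal" transports, and that this factorisation is respected by the monad multiplication. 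Once that factorisation lemma is stated cleanly, the rest of the proof is a direct translation of axioms, so I would present it as "by direct inspection of the definitions, using the factorisation of operations of $\oint F$ described above."
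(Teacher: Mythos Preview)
Your proposal is correct and is precisely what the paper's proof amounts to: the paper simply writes ``By direct inspection,'' and you have spelled out the content of that inspection --- the factorisation of a general $\oint F$-operation into a pure $F(b)$-operation followed by unit-operations decorated with arrows of $A$, and the translation of the $\oint F$-algebra axioms into the section data. Your anticipated bookkeeping with orderings $\sigma$ is indeed the only place requiring care, but the equivariancy condition handles it exactly as you suggest.
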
  

\begin{proof} By direct inspection.
\end{proof}

The following corollary justifies our terminology.

\begin{corol} Let $D$ be a polynomial monad and let $F:A\to \mathbf{PMon}$ be the constant functor $F(a) = D$ then
$$\oint F \cong D\otimes_{\scriptscriptstyle BV} A,$$
where $\otimes_{\scriptscriptstyle BV}$ is the Boardman-Vogt tensor product of symmetric operads.  
\end{corol}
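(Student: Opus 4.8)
The plan is to identify, for the constant functor $F(a) = D$, the explicit description of $\oint F$ given just above with the known presentation of the Boardman--Vogt tensor product $D \otimes_{\scriptscriptstyle BV} A$ as a polynomial monad. First I would unwind the definition of $\oint F$ in the constant case: a colour is a pair $(a,i)$ with $a \in A$ and $i$ a colour of $D$, so the colour set is $\Ob(A) \times I_D$. An operation consists of an operation $\beta \in B_D$, a family of morphisms $\alpha_d : a_d \to b$ in $A$ indexed by $d \in p_D^{-1}(\beta)$, and a family of colours $j_d$ of $D$ with the compatibility $F\alpha_d(j_d) = s_D(d)$; since $F$ is constant, $F\alpha_d$ is the identity on colours, so this compatibility forces $j_d = s_D(d)$ and the colour data is redundant. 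Thus an operation of $\oint F$ reduces to a pair $(\beta, (\alpha_d)_d)$ of an operation of $D$ together with a morphism of $A$ attached to each input edge, with target $(b, t(\beta))$ and source (at the marked $d$) equal to $(a_d, s_D(d))$. The composition and unit inherited from $\oint F$ then insert operations of $D$ at marked edges while composing the attached $A$-morphisms.

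Next I would recall (or cite, e.g.\ from \cite{BB}) the description of $D \otimes_{\scriptscriptstyle BV} A$ as a polynomial monad. The Boardman--Vogt tensor product of a symmetric operad (equivalently a polynomial monad) $D$ with a small category $A$ has colour set $\Ob(A) \times I_D$; an operation is an operation of $D$ decorated by a morphism of $A$ on each of its inputs (this is precisely the "interchange" data encoding that $A$-morphisms commute past $D$-operations), and composition is governed by composing $A$-morphisms and $D$-operations separately, which is exactly the interchange law. Matching this data term-by-term with the reduction of $\oint F$ from the previous paragraph gives a bijection on colours, on operations, and on marked operations, compatible with source, target and middle maps.

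Finally I would check that this bijection is an isomorphism of polynomial monads, i.e.\ that it respects units and the multiplication. The unit of $\oint F$ at $(a,i)$ is the unit operation $1_i^D$ of $D$ with the identity morphism $\mathrm{id}_a$ on its unique input; this corresponds under the bijection to the unit of $D \otimes_{\scriptscriptstyle BV} A$. For multiplication one verifies that $\pi = \sigma \circ (\sigma_1 \times \cdots \times \sigma_k)$ together with the composite operation of $D$ and the composites of the attached $A$-morphisms $\alpha \circ \alpha_d$ reproduce exactly the composition law of the Boardman--Vogt product; this is a routine but slightly bookkeeping-heavy diagram chase.

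The main obstacle I anticipate is not conceptual but notational: one must set up a clean matching between the "edge-decoration by $A$-morphisms" picture of $\oint F$ and whatever explicit polynomial presentation of $D \otimes_{\scriptscriptstyle BV} A$ one takes as the definition, and then confirm the composition and equivariancy conditions line up. Since the paper elsewhere (Proposition \ref{sectionP} and the surrounding material) handles such identifications "by direct inspection," I expect the honest proof to be: \emph{By direct inspection}, with the bulk of the work being the observation that the colour-compatibility data in the definition of $\oint F$ collapses when $F$ is constant, leaving precisely the Boardman--Vogt decoration data.
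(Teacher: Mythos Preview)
Your approach is correct but takes a different route from the paper. The paper's proof is a single sentence: it invokes the immediately preceding Proposition, which identifies algebras of $\oint F$ with sections of the Grothendieck construction of $a \mapsto \Alg_{F(a)}(\Ee)$; when $F$ is constant equal to $D$, such sections are precisely functors $A \to \Alg_D(\Ee)$, i.e.\ presheaves of $D$-algebras, and this is exactly the \emph{defining} property of $D\otimes_{\scriptscriptstyle BV} A$. So the paper argues at the level of categories of algebras (the universal property characterising the Boardman--Vogt tensor product), while you argue at the level of the underlying polynomial data.

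Your direct comparison also works, and has the merit of being completely explicit. The cost is that you need to have in hand an explicit polynomial presentation of $D\otimes_{\scriptscriptstyle BV} A$ to compare against, and then verify that the composition laws match; this is the bookkeeping you anticipate. Note also that the paper's Remark immediately following the Corollary flags that the BV product of two polynomial monads need not be $\Sigma$-free in general, and that the special case ``operad tensored with a category'' is what makes the statement meaningful; your approach would need to incorporate this observation when writing down the polynomial presentation of $D\otimes_{\scriptscriptstyle BV} A$, whereas the paper's argument sidesteps it by working with the defining property. Either way the result follows, but the paper's route is the quicker one here.
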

\begin{proof} Indeed, algebras of $\oint F$ in this case are just presheaves of algebras of $D,$ which is a defining property of the Boardman-Vogt tensor product.
\end{proof}
\begin{remark}
The Boardman-Vogt product was defined for symmetric operads not for polynomial monads. It can not be restricted under equivalence between $\Sigma$-free symmetric operads and polynomial monads in general because the result of this product may not be $\Sigma$-free. Famous example is, of course, the isomorphism $Ass\otimes_{\scriptscriptstyle BV} Ass \cong Com,$ which is an incarnation of the Eckmann-Hilton argument. 

But, it is not hard to check that the $BV$-product of a $\Sigma$-free operad and a category (symmetric operad with unary operations only) is again $\Sigma$-free, so our sentence makes sense.     
\end{remark}

\begin{corol}\label{intoint} Let $X$ be a categorical algebra of $\oint F.$ Then an internal $\oint F$-algebra $x$ in $X$ consists of a family of  internal $F(a)$-algebras $x(a)$ in $X(a)$ together with a family of morphisms of internal algebras (in the sense of Definition \ref{inmor}) $\phi(a):x(a)\to f^*(x(b)),$  which satisfies the usual lax-compatibility conditions.   

\end{corol}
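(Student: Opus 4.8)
The plan is to unwind Definition~\ref{intalg1} applied to the polynomial monad $\oint F$, using on the one hand the explicit formulas for the colours, operations, units and composition of $\oint F$ recorded above, and on the other hand the Proposition above describing $\oint F$-algebras as sections, which gives $X$ as a family $X(a)$ of categorical $F(a)$-algebras together with transition functors $\phi(a)\colon X(a)\to F(f)^*X(b)$, and describes the structure functors of $X$ in terms of these. By Definition~\ref{intalg1} an internal $\oint F$-algebra in $X$ is a lax morphism $1\to X$ of categorical $\oint F$-algebras, which by the explicit description of internal algebras is the same as a choice of object $x_{(a,i)}\in X(a)_i$ for every colour $(a,i)$ of $\oint F$ together with a structure morphism
$$\mu_{(\mathrm{op},\sigma)}\colon m_{(\mathrm{op},\sigma)}\bigl(x_{(a_{\sigma(1)},j_{\sigma(1)})},\dots,x_{(a_{\sigma(k)},j_{\sigma(k)})}\bigr)\longrightarrow x_{(b,t(\beta))}$$
for each operation $\mathrm{op}=(\beta,(\alpha_d)_d,(j_d)_d)$ of $\oint F$ and each ordering $\sigma$, subject to the associativity, unitarity and equivariancy conditions. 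My first step is the trivial repackaging: giving the objects $x_{(a,i)}$ is the same as giving, for each $a\in A$, a $\Cat$-collection $x(a)=(x_{(a,i)})_{i\in F(a)_0}$ of objects of $X(a)$.

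The key step is then to sort the structure morphisms $\mu_{(\mathrm{op},\sigma)}$ according to the shape of the operation $\mathrm{op}$, along the two kinds of ``generating'' operations of $\oint F$. If all of the $\alpha_d$ are the identity of $b$, then $\mathrm{op}$ is simply an operation of $F(b)$ and the functor $m_{(\mathrm{op},\sigma)}$ is the structure functor of the $F(b)$-algebra $X(b)$; thus the $\mu$'s indexed by these operations are precisely an internal $F(b)$-algebra structure on $x(b)$, so from $x$ one reads off the family of internal $F(a)$-algebras $x(a)$. If on the other hand $\beta=1_i$ is a unit of $F(b)$ (so that $p_b^{-1}(\beta)$ is a singleton $\{d\}$) and $\alpha_d=f\colon a\to b$ is an arbitrary morphism of $A$ with $F(f)(j_d)=i$, then $m_{(\mathrm{op},\sigma)}$ is the corresponding component of the transition functor $\phi(f)\colon X(a)\to F(f)^*X(b)$, and the $\mu$'s indexed by these operations, together with their coherence against the first kind, organise into a lax transformation which I will again call $\phi(f)$ from the internal $F(a)$-algebra $x(a)$ to $F(f)^*x(b)$. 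Since, by the composition law of $\oint F$, every operation of $\oint F$ factors as a composite of operations of these two kinds, the whole family $(\mu_{(\mathrm{op},\sigma)})$ is determined by the $x(a)$ and the $\phi(f)$, and conversely any such data assembles into a unique family $(\mu_{(\mathrm{op},\sigma)})$.

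It then remains to translate the associativity, unitarity and equivariancy axioms for $x$ into conditions on this data. Restricted to composites of operations of the first kind they yield the internal $F(a)$-algebra axioms for each $x(a)$; the equivariancy of $\oint F$-operations adds nothing beyond equivariancy of the $x(a)$, since the operations of the second kind are unary. Applied to mixed composites of a second-kind operation $f\colon a\to b$ with first-kind operations, they say exactly that $\phi(f)$ is a morphism of internal algebras in the sense of Definition~\ref{inmor}, where $\xi$ is taken to be the transition functor $\phi(f)$ of the categorical $\oint F$-algebra $X$ and the ambient square of polynomial monads is the one determined by $F(f)\colon F(a)\to F(b)$. Finally, applied to composites of two second-kind operations and to units, they give the functoriality relations $\phi(g\circ f)=F(f)^*\phi(g)\circ\phi(f)$ and $\phi(\mathrm{id}_a)=\mathrm{id}$, i.e.\ the usual lax-compatibility conditions; and the same analysis one dimension up identifies $\oint F$-natural transformations of internal algebras with morphisms of such families. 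I expect the one genuinely fiddly point, and the reason this is done ``by direct inspection'', to be precisely this last dictionary between coherence cells: keeping track of which instances of the associativity and unit axioms of $\oint F$ produce which of the listed conditions. (Alternatively, one expects $\int X$ itself to be a twisted Boardman--Vogt product, $\int X\cong\oint\,(a\mapsto\int X(a))$ as polynomial monads over $\oint F$, and could then deduce the corollary by applying Corollary~\ref{sections} fibrewise over $A$.)
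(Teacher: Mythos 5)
The paper states this corollary without an explicit proof (it sits under the preceding Proposition whose proof is ``by direct inspection''), and your proof is the natural direct-inspection argument: you correctly identify the two generating shapes of $\oint F$-operations (those with all $\alpha_d=\mathrm{id}_b$, giving the $F(b)$-structure, and the unary ones with $\beta$ a unit and $\alpha_d$ arbitrary, giving the transition cells), verify that every operation factors through these, and match the resulting structure morphisms and axioms with the data in the statement. This is correct and, as far as one can tell, is the argument the authors intend; your closing parenthetical about $\int X\cong\oint(a\mapsto\int X(a))$ combined with Corollary~\ref{sections} is also a valid and arguably cleaner alternative route.
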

  
It is obvious that in the case of presheaves in $\mathbf{Cat}$  twisted $BV$-product and  Grothendieck constructions are equivalent.  Indeed, given $F:A\to Cat$ one can form $\oint F.$ But we also can consider $F$ as a categorical  $A$-algebra. Then $\int F$ is isomorphic to $\oint F.$    
 But in general, there can not be even a cartesian polynomial monad map $\oint F \to A$ because if such a map exists the pullback of the middle maps would force $\oint F$ to be a category.
Instead of this we have an obvious functoriality of the twisted $BV$-product
 in the sense that every natural transformation $\psi:F\to G$ of presheaves of polynomial monads over $A$ induces a map of polynomial monads $$\oint \psi:\oint F \to \oint G.$$  If $F$ is a presheaf  of small categories and $1$ is the constant terminal presheaf of small categories then the unique morphism $e:F\to 1$ induces $\oint e:\oint F \cong \int F \to \oint 1 = A,$ which is exactly the projection from classical Grothendieck construction. 
 
Given $\psi: F\to G$ we can now construct the classifier $(\oint G)^{\oint F}.$ 
 \begin{proposition}\label{GFclassifier}
 The classifier $(\oint G)^{\oint F}$ is the categorical $\oint G$-algebra freely generated by the internal $F(a)$ algebras $x(a)\in G(a)^{F(a)} , a \in A$ together with morphisms
 $\phi(a): x(a)\to f^*(x(b))$ for each $f:a\to b$ in $A$ which satisfy obvious lax-functorial property.    
 \end{proposition}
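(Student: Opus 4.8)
The plan is to prove this by combining the universal property of the classifier functor established in this section with Corollary~\ref{intoint} describing internal algebras inside an algebra of $\oint F$. Recall that $(\oint G)^{\oint F}$ is characterized by the natural isomorphism
\[
\Alg_{\oint G}(\Cat)\bigl((\oint G)^{\oint F}, X\bigr) \cong \Int_{\oint F}(X)
\]
for every categorical $\oint G$-algebra $X$, where the internal $\oint F$-algebra lives inside the restriction $(\oint\psi)^*(X)$. So it suffices to identify the right-hand side with maps out of the claimed algebra.

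First I would unwind $\Int_{\oint F}(X)$ using Corollary~\ref{intoint}: an internal $\oint F$-algebra in $X$ is precisely a family of internal $F(a)$-algebras $x(a)$ inside the $F(a)$-algebras $X(a)$, together with morphisms of internal algebras $\phi(a):x(a)\to f^*(x(b))$ for each $f:a\to b$ in $A$, satisfying the lax-functoriality conditions. Next, for each fixed $a$, the universal property of the (relative) classifier $G(a)^{F(a)}$ — applied to the map $\psi_a: F(a)\to G(a)$ — says that giving an internal $F(a)$-algebra $x(a)$ inside the $G(a)$-algebra $X(a)$ is the same as giving a map of categorical $G(a)$-algebras $G(a)^{F(a)}\to X(a)$, under which the canonical internal algebra of $G(a)^{F(a)}$ is sent to $x(a)$. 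Similarly, by Proposition~\ref{Gf} (the functoriality of $G^f$ for commutative squares of polynomial monads), a morphism of internal algebras $\phi(a):x(a)\to f^*(x(b))$ corresponds exactly to the data making the square of categorical $F(a)$-algebras commute up to the prescribed lax cell; and the canonical internal algebras $x(a)\in G(a)^{F(a)}$ already come equipped with canonical comparison morphisms $\phi(a)$ as in the statement. Assembling these component-wise identifications, a map $(\oint G)^{\oint F}\to X$ is the same as the data of the proposition together with compatibility — which is exactly an internal $\oint F$-algebra in $X$. Hence the claimed algebra represents $\Int_{\oint F}(-)$, so by uniqueness of representing objects it is $(\oint G)^{\oint F}$.

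The step I expect to be the main obstacle is the bookkeeping of the \emph{lax-compatibility conditions}: matching the cocycle-type coherence data among the $\phi(a)$'s that arises from Corollary~\ref{intoint} with the coherence built into iterated applications of Proposition~\ref{Gf} along composable morphisms in $A$. In particular one must check that horizontal pasting of the squares from Proposition~\ref{square} corresponds on the level of internal algebras to composition of the morphisms $\phi$, and that the identity squares give the identity $\phi$; this is where the functoriality clause ``functorial with respect to horizontal pasting of squares'' in Proposition~\ref{square} does the real work. Once that compatibility is verified, the rest is a formal consequence of the adjunction between the classifier $2$-functor and the Grothendieck construction together with the explicit description of $\Alg_{\oint F}$ as sections. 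I would therefore organize the write-up as: (i) state the representing property to be checked; (ii) invoke Corollary~\ref{intoint} and the relative-classifier universal property component-wise; (iii) reconcile the $A$-indexed coherence using Proposition~\ref{Gf} and the naturality in Proposition~\ref{square}; (iv) conclude by Yoneda.
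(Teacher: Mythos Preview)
Your proposal is correct and follows essentially the same approach as the paper: the paper's own proof is a single sentence invoking ``the general description of universal properties of the classifiers'' together with Corollary~\ref{intoint}, and your argument is precisely a careful unpacking of that sentence. Your concern about the lax-compatibility bookkeeping is legitimate but is exactly what the paper sweeps under the phrase ``obvious lax-functorial property''; you are simply being more explicit than the authors.
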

 \begin{proof} This follows from general description of universal properties of the classifiers and  Lemma \ref{intoint}. 
 \end{proof}
 
Given $\psi:F\to G$ we also can construct yet another canonical categorical  $\oint G$-algebra as follows. For each $a\in A$ we take the classifier $G(a)^{F(a)}$ of $\psi(a):F(a)\to G(a).$  By Proposition \ref{square} these objects form an algebra in $Cat$ of $\oint G.$ By slightly abusing notations we will call  it $G^F$ and call it {\it local classifier algebra} of $\psi.$  

\begin{theorem}\label{cofibrantreplacement} There is a canonical morphism 
$$\Psi: (\oint G)^{\oint F} \to G^F$$ of categorical $\oint G$-algebras which is a  weak equivalence of simplicial $\oint G$-algebras after application of nerve functor. 

\end{theorem}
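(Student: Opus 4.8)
The plan is to exhibit the morphism $\Psi$ componentwise and then prove the weak equivalence by reducing to the known homotopy-colimit description of classifiers (Theorem \ref{hocolim} and Corollary \ref{Nerve}) together with a Thomason-type/Bousfield-Kan argument for homotopy colimits over $A$. First I would construct $\Psi$. By Proposition \ref{GFclassifier} the classifier $(\oint G)^{\oint F}$ is freely generated by a compatible family of internal $F(a)$-algebras $x(a) \in G(a)^{F(a)}$ together with morphisms $\phi(a): x(a) \to f^*(x(b))$ for each $f: a \to b$ in $A$. On the other side, the local classifier algebra $G^F$ is precisely the categorical $\oint G$-algebra whose $a$-component is $G(a)^{F(a)}$, assembled via Proposition \ref{square}; by Proposition \ref{Gf} each $G(a)^{F(a)}$ carries its own canonical internal $F(a)$-algebra, call it $x_0(a)$, and the transition maps $G(f): G(a)^{F(a)} \to F(f)^* G(b)^{F(b)}$ induce morphisms of internal algebras $x_0(a) \to f^*(x_0(b))$ whose lax transformation component is the identity. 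Hence the family $(x_0(a), \mathrm{id})$ is exactly the kind of data that generates $(\oint G)^{\oint F}$, so by its universal property there is a unique map $\Psi: (\oint G)^{\oint F} \to G^F$ of categorical $\oint G$-algebras sending the generating family to $(x_0(a), \mathrm{id})$.

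Next I would identify both sides, after applying the nerve, as homotopy colimits over $A$. For the source: by Corollary \ref{Nerve}, $N((\oint G)^{\oint F}) \cong \LL(\oint\psi)_!(1)$, the left derived functor of $(\oint\psi)_!$ applied to the terminal simplicial $\oint F$-algebra. The proposition describing algebras of $\oint F$ as sections of the Grothendieck construction of $Alg_F(\Ee): A^{\op} \to CAT$ lets me write the terminal $\oint F$-algebra as the compatible family of terminal $F(a)$-algebras, and then $\LL(\oint\psi)_!$ should decompose, colour by colour, as a homotopy colimit over $A$ of the $I$-collections $N(G(a)^{F(a)})$ glued along the transition functors $G(f)^*$. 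For the target: $G^F$ is by construction already assembled out of the components $G(a)^{F(a)}$ over $A$, but as a strict object of sections rather than a homotopy colimit; its nerve computes the strict colimit, which for a Reedy/projectively cofibrant enough diagram agrees with the homotopy colimit. So the core of the proof is that $\Psi$ realises, after nerve, the canonical comparison map from a homotopy colimit to the strict colimit (equivalently, a cofibrant replacement in the projective model structure on $\oint G$-algebras), and this is a weak equivalence because the diagram $a \mapsto N(G(a)^{F(a)})$ is pointwise cofibrant — indeed each $N(G(a)^{F(a)})$ is a cofibrant simplicial $F(a)$-algebra by Corollary \ref{Nerve}.

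Concretely I would proceed in the following steps: (i) verify that $\Psi$ is well defined via the universal property, using Proposition \ref{GFclassifier} and Proposition \ref{Gf}; (ii) apply $N$ and use Corollary \ref{Nerve} and Proposition \ref{f_!(S^S)} to rewrite $N((\oint G)^{\oint F})$ as $\LL(\oint\psi)_!$ of the terminal algebra; (iii) unwind the twisted Boardman-Vogt construction so that $(\oint\psi)_!$ factors as $\colim_A$ composed with the fibrewise left Kan extensions $G(f)_!$, whence $\LL(\oint\psi)_!(1) \simeq \hocolim_A\, a \mapsto \LL\psi(a)_!(1) \simeq \hocolim_A\, a\mapsto N(G(a)^{F(a)})$; (iv) identify $N(G^F)$ with the strict colimit $\colim_A\, a\mapsto N(G(a)^{F(a)})$ and recognise $N(\Psi)$ as the natural comparison map; (v) conclude the comparison map is a weak equivalence because the diagram is objectwise cofibrant. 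The main obstacle I anticipate is step (iii): carefully matching the combinatorial definition of $\oint F$ (operations given by an operation $\beta\in F(b)$ together with morphisms $\alpha_d$ in $A$ and compatible colours) with the iterated left-Kan-extension formula for $\LL(\oint\psi)_!$, i.e. proving that the polynomial-monad left adjoint genuinely disassembles as "homotopy colimit over $A$ of the fibrewise derived left adjoints." This is essentially a polynomial-monad enhancement of Thomason's theorem, and making the homotopy-coherence bookkeeping precise — rather than the formal model-categorical conclusion — is where the real work lies.
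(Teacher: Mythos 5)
Your step (i) — constructing $\Psi$ via the universal property of classifiers using Proposition \ref{GFclassifier} and Proposition \ref{Gf} with $\phi(a)=\mathrm{id}$ — is correct and agrees with the paper. The problem is with steps (ii)--(v), which are both structurally confused and circular.

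First, the structural confusion. You want to identify $N(G^F)$ with the strict colimit $\colim_A N(G(a)^{F(a)})$ and $N((\oint G)^{\oint F})$ with the corresponding homotopy colimit over $A$, and then claim $N(\Psi)$ is the colimit-to-hocolim comparison. But $G^F$ is a categorical $\oint G$-algebra — a \emph{family} of $G(a)$-algebras indexed by $a\in A$ together with transition morphisms, not a colimit over $A$ — and the theorem asserts a weak equivalence of simplicial $\oint G$-algebras, which in the transferred (semi-)model structure is detected colourwise on underlying collections. There is no colimit over $A$ to compute; the assertion to prove is that for each colour $(a,i)$ the functor $\Psi(a)\colon (\oint G)^{\oint F}(a)\to G(a)^{F(a)}$ has a weakly contractible nerve-map. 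Treating $N(G^F)$ as a "strict colimit" collapses exactly the $\oint G$-algebra structure that the theorem is about, and without that structure the statement would not be the one needed downstream.

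Second, circularity. The "polynomial-monad enhancement of Thomason's theorem" you flag as the heart of step (iii) is precisely Theorem \ref{thomason} of the paper, and in the paper's development Theorem \ref{thomason} is \emph{deduced from} the present result (via the corollary that $N((\oint G)^{\oint F})$ is a cofibrant replacement of $N(G^F)$ and the proposition $\mathbb{L}d_!(N(G^F))\cong N(D^{\oint F})$). Using it here would make the argument circular unless you reprove it from scratch, which you have not done. The paper's actual argument is considerably more elementary and avoids all of this: after constructing $\Psi$, one uses Proposition \ref{GFclassifier} to produce, for each $a$, a functor $p(a)\colon G^F(a)\to (\oint G)^{\oint F}(a)$ (sending the canonical internal $F(a)$-algebra of $G(a)^{F(a)}$ to the generating one of $(\oint G)^{\oint F}(a)$), and then checks that $p(a)$ is a left adjoint to $\Psi(a)$, with the counit built from the structural morphisms $\phi(a)\colon x(a)\to f^*(x(b))$. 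An adjunction induces a homotopy equivalence of nerves, so $N(\Psi(a))$ is a weak equivalence for every $a$, hence $N(\Psi)$ is a weak equivalence of simplicial $\oint G$-algebras. You should replace your steps (ii)--(v) with this componentwise adjunction argument; no derived left Kan extensions or comparison of colimits are needed.
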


\begin{proof} We can get the map $\Psi$ from universal property of classifiers as follows. It will be enough to observe that $G^F$ contains a canonical internal $\oint F$-algebra. But this follows from Proposition \ref{Gf} and Corollary \ref{intoint} where all morphisms $\phi(a)$ are identities. 

Also observe that according to Proposition \ref{GFclassifier} each $G(a)$-algebra $(\oint G)^{\oint F}(a)$ contains an internal $F(a)$-algebra which immediately provides us with a $G(a)$-algebra morphism $p(a):G^F(a)\to(\oint G)^{\oint F}(a).$  By universal property the functor $p(a)$ is a section of $\Psi(a)$ but not necessary a $(\oint G)^{\oint F}$-algebras morphism. 
But we claim $p(a)$ is the left adjoint  to $\Psi(a).$ Indeed the counit $p(a)(\psi(a)(f^*(b))\to f^*(x(b))$ on the generating internal $F(b)$-algebra $x(b)$  in in $G(b)$ is given by canonical morphism       
  $\phi(a): x(a)\to f^*(x(b))$ since $p(a)(\psi(a)(f^*(b)) = x(a)$ for any $f:a\to b$ (since the morphism $\Psi$  maps each $\phi(a)$ to the identity).     
\end{proof}

\begin{remark} This theorem allows to see  more relations between twisted $BV$-product  and Grothendieck construction.
There is a canonical factorization of the map of polynomial monads $\oint \psi : \oint F\to \oint G:$
{
	\[
	\xymatrix{
		\oint F \ar[rr]^{} \ar[rd]_{\oint \psi} && \int G^F \ar[ld]^{} \\
		& \oint G
	}
	\]
}  
This is just the mate under the adjuction between Grothendieck construction and classifier functor of the canonical map $\Psi$ from the classifier $(\oint G)^{\oint F}$ to the local classifier algebra $G^F.$ 

 In the case of a map of presheaves in $Cat$ the canonical morphism $\Psi$ can be also understood as the counit of the adjunction between Grothendieck construction and classifier functor. 
\end{remark}

 \begin{corol} The simplicial $\oint G$-algebra $N((\oint G)^{\oint F})$ is a cofibrant replacement of the simplicial $\oint G$-algebra $N(G^F).$
 \end{corol}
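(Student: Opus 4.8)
The plan is to combine Theorem \ref{cofibrantreplacement} with Corollary \ref{Nerve}. First I would invoke Theorem \ref{cofibrantreplacement}, which produces a canonical morphism $\Psi: (\oint G)^{\oint F} \to G^F$ of categorical $\oint G$-algebras whose nerve $N(\Psi): N((\oint G)^{\oint F}) \to N(G^F)$ is a weak equivalence of simplicial $\oint G$-algebras. Thus it only remains to show that the source $N((\oint G)^{\oint F})$ is a cofibrant simplicial $\oint G$-algebra; once this is established, $N((\oint G)^{\oint F}) \to N(G^F)$ is by definition a cofibrant replacement.

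For the cofibrancy claim I would apply Corollary \ref{Nerve} directly to the map of polynomial monads $\oint \psi : \oint F \to \oint G$. That corollary states that for any cartesian map $f : S \to T$ of polynomial monads, the nerve $N(T^S)$ of the relative classifier is a cofibrant simplicial $T$-algebra (indeed $N(T^S) \cong \LL f_!(1)$, where $1$ is the terminal simplicial $S$-algebra). Taking $S = \oint F$, $T = \oint G$ and $f = \oint\psi$ gives exactly that $N((\oint G)^{\oint F})$ is a cofibrant simplicial $\oint G$-algebra, as $\oint\psi$ is a cartesian map of polynomial monads (this cartesianness is the one small point to record — it follows from $\psi$ being a natural transformation of presheaves of polynomial monads, so that the middle-square pullback condition holds levelwise). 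Putting the two facts together: $N((\oint G)^{\oint F})$ is cofibrant, $N(G^F)$ is the target, and $N(\Psi)$ is a weak equivalence between them, so $N((\oint G)^{\oint F})$ is a cofibrant replacement of $N(G^F)$, which is the assertion.

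The only real obstacle is the verification that $\oint\psi$ is cartesian so that Corollary \ref{Nerve} is applicable; everything else is a direct citation of the two preceding results. In practice this is routine: a natural transformation $\psi : F \to G$ of strict $2$-functors $A \to \mathbf{PMon}$ is by definition a family of cartesian maps $\psi(a) : F(a) \to G(a)$ compatible with the structure maps, and an inspection of the explicit description of the operations of $\oint F$ and $\oint G$ in Section \ref{BVP} shows that the induced square of polynomial functors has its middle square a pullback. Hence the corollary follows immediately, and I would present it as a one-line consequence rather than belabour the point.
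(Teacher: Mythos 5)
Your argument is correct and is essentially the deduction the paper leaves implicit (no proof is given for this corollary). You correctly assemble the two ingredients: Corollary \ref{Nerve} gives cofibrancy of $N((\oint G)^{\oint F})$, and Theorem \ref{cofibrantreplacement} gives the weak equivalence $N(\Psi)$. The only overcautious part is your verification that $\oint\psi$ is cartesian: in this paper's framework all morphisms in $\mathbf{PMon}$ are cartesian by definition, and the passage from $\psi$ to $\oint\psi$ is asserted in Section \ref{BVP} to produce a map of polynomial monads, so Corollary \ref{Nerve} applies without further checking.
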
    

Now let $\psi:F\to G$ be a morphism of presheaves of polynomial monads over  a polynomial monad $D.$  By universal property of lax-colimit we then have   a  commutative  triangle of maps of polynomial monads:
{
	\[
	\xymatrix{
		\oint F \ar[rr]^{t} \ar[rd]_{e} && \oint G \ar[ld]^{d} \\
		& D
	}
	\]
}  
where $t = \oint \psi.$ The map $d$ induces a left Quillen functor $d_!: Alg_{\oint G}(SSet)\to Alg_D(SSet).$

\begin{proposition} 
There exists an isomorphism in the homotopy category of  simplicial $D$-algebras:
$$\mathbb{L}d_!(N(G^F)) \cong N(D^{\oint F}).$$ 

\end{proposition}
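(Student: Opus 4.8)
The plan is to combine Theorem \ref{cofibrantreplacement} with the functorial properties of classifiers established in Section 4, in particular Proposition \ref{f_!(S^S)} and Corollary \ref{Nerve}. First I would recall that by the corollary following Theorem \ref{cofibrantreplacement}, the simplicial $\oint G$-algebra $N((\oint G)^{\oint F})$ is a cofibrant replacement of $N(G^F)$; hence in the homotopy category we have $N(G^F) \cong N((\oint G)^{\oint F})$ as $\oint G$-algebras, and applying $d_!$ we get $\mathbb{L}d_!(N(G^F)) \cong d_!(N((\oint G)^{\oint F}))$, using that $N((\oint G)^{\oint F})$ is cofibrant so that $d_!$ computes the left derived functor on it.

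Next I would identify $d_!(N((\oint G)^{\oint F}))$ with $N(D^{\oint F})$. The key point is that $N((\oint G)^{\oint F}) = N((\oint G)^{\oint F})$ is the nerve of the classifier of the composite $\oint F \stackrel{t}{\to} \oint G$, so by Corollary \ref{Nerve} applied to $t$ it equals $\mathbb{L}t_!(1) = t_!(N((\oint F)^{\oint F}))$. Then $d_! t_! = (d\circ t)_! = e_!$ since $d\circ t = e$ by the commutative triangle over $D$. Therefore $d_!(N((\oint G)^{\oint F})) \cong e_!(N((\oint F)^{\oint F})) = \mathbb{L}e_!(1) \cong N(D^{\oint F})$, again by Corollary \ref{Nerve}, now applied to the map $e:\oint F\to D$. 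Alternatively, and perhaps more cleanly, one can invoke Proposition \ref{f_!(S^S)}: $d_!((\oint G)^{\oint F}) \cong d_!\big((\oint G)_!((\oint F)^{\oint F})\big) \cong D^{\oint F}$ at the level of categorical $D$-algebras, since $d_! \circ (\oint G)_! \cong D_!$ as $(\oint G)_!$ followed by $d_!$ is $(d\circ t)_! \circ$ nothing — more precisely $(\oint G)^{\oint F} = (\oint G)_!((\oint F)^{\oint F})$ by Proposition \ref{f_!(S^S)} applied to $t$, and then $d_!$ of that is $D^{\oint F}$ by Proposition \ref{f_!(S^S)} applied to $e$, because $d_!(\oint G)_! = (dt)_! = e_!$ and $e_!((\oint F)^{\oint F}) = D^{\oint F}$. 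Taking nerves (which commutes with $d_!$ since nerve commutes with left adjoints between algebra categories, as used in the proof of Theorem \ref{QTAPoly}) gives $d_!(N((\oint G)^{\oint F})) \cong N(D^{\oint F})$.

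I expect the main obstacle to be bookkeeping the homotopy-theoretic subtleties rather than the algebra: one must be careful that $d_!$ applied to the cofibrant object $N((\oint G)^{\oint F})$ genuinely computes $\mathbb{L}d_!$, which requires knowing $N((\oint G)^{\oint F})$ is cofibrant as a $\oint G$-algebra — this is exactly Corollary \ref{Nerve} (the nerve of a relative classifier is a cofibrant simplicial algebra). Combined with Theorem \ref{cofibrantreplacement} giving the weak equivalence $N((\oint G)^{\oint F}) \to N(G^F)$, we conclude $\mathbb{L}d_!(N(G^F)) \cong \mathbb{L}d_!(N((\oint G)^{\oint F})) \cong d_!(N((\oint G)^{\oint F})) \cong N(D^{\oint F})$, where the middle isomorphism uses cofibrancy and the last uses the functoriality $d_! \circ (\oint G)_! \cong e_!$ together with Proposition \ref{f_!(S^S)}. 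So the proof is essentially a chain of identifications, and the only thing to verify with care is that the $\oint G$-algebra structures on $N(G^F)$ and $N((\oint G)^{\oint F})$ match the ones for which $d_!$ is left Quillen, which is automatic since both are built functorially from the map $t:\oint F\to \oint G$.
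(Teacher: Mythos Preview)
Your proposal is correct and follows essentially the same approach as the paper: both arguments identify $N(D^{\oint F})$ with $d_!(N((\oint G)^{\oint F}))$ via the factorisation $e = d\circ t$ together with Proposition \ref{f_!(S^S)} (and the commutation of nerve with $f_!$), and then invoke the corollary to Theorem \ref{cofibrantreplacement} that $N((\oint G)^{\oint F})$ is a cofibrant replacement of $N(G^F)$ to pass to $\mathbb{L}d_!$. The only cosmetic issue is your notation ``$(\oint G)_!$'', which should be $t_!$ throughout; otherwise your chain of identifications is exactly the paper's, just traversed in the opposite direction.
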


\begin{proof} Compute
$$e_!(N((\oint F)^{\oint F}) = N(e_!((\oint F)^{\oint F}) = N(D^{\oint F}).$$
On the other hand this object is equal to
$$d_!t_!(N((\oint F)^{\oint F}) =  d_!(N t_!((\oint F)^{\oint F})) = d_!(N((\oint G)^{\oint F}).$$ 
Since $N((\oint G)^{\oint F}$ is a cofibrant replacement for the nerve of $G^F$ we have
that the last object is isomorphic in $Ho(Alg_D(SSet)$ to the result of application of the derived functor of $d_!$ to $N(G^F).$
   
\end{proof}
This Proposition allows us to prove the following generalisation  of a classical Thomason's theorem  about homotopy colimits of small categories  \cite{Thlax}. 
\begin{theorem}\label{thomason} Let $F$ be a presheaf of polynomial monads over a polynomial monad $D.$ Then  $N(D^{\oint F})$ is weakly equivalent to $ \hocolim_A N(D^{F(a)}).$ 
\end{theorem}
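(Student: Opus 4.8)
The plan is to reduce Theorem~\ref{thomason} to the previous Proposition together with Theorem~\ref{cofibrantreplacement} by recognising the local classifier algebra $G^F$ with $G = 1$ the constant terminal presheaf. First I would take $\psi : F \to 1$, the unique map to the constant terminal presheaf of polynomial monads over $D$; then $G^F$ becomes the categorical $\oint 1 = D\otimes_{\scriptscriptstyle BV} A$-algebra whose value at $a$ is the absolute classifier $F(a)^{F(a)} = $ (classifier of the identity of $F(a)$)? No --- more precisely, with $G(a) = 1$ we get $G(a)^{F(a)} = 1^{F(a)}$, whose nerve is $N(1^{F(a)})$. By the Corollary to Quillen's Theorem~A (the classical statement applied to polynomial monads), $N(1^{F(a)})$ computes $\mathbb{L}e_{a,!}(1)$ for $e_a : F(a) \to 1$; but here $1$ is not $D$. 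So instead I should apply the preceding Proposition directly with this choice of $\psi$: it yields $\mathbb{L}d_!(N(1^F)) \cong N(D^{\oint F})$, where $1^F$ is the local classifier algebra of $F \to 1$ over $D$, i.e. the categorical $\oint 1$-algebra with components $N(1^{F(a)})$.

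The second step is to identify $\hocolim_A N(D^{F(a)})$ with $\mathbb{L}d_!(N(1^F))$. Here I would use the composite $F(a) \to 1 \to D$ and Corollary~\ref{Nerve}: $N(D^{F(a)}) \cong \mathbb{L}(g_a)_!(1)$ where $g_a : F(a) \to D$. On the other hand, the functor $d : \oint 1 = D\otimes_{\scriptscriptstyle BV} A \to D$ has the property that its left Quillen functor $d_!$ on algebras, when precomposed with the inclusion of the $F(a)$-algebras into $\oint 1 = A$-shaped diagrams (note $\oint 1 = A$ when the presheaf is constant terminal, but here $G = 1$ is the terminal \emph{polynomial monad}, giving $\oint 1 \cong D \otimes_{\scriptscriptstyle BV} A$ as the Corollary states --- I must be careful which "$1$" is meant). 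The cleanest route: the algebras of $\oint 1 = D\otimes_{\scriptscriptstyle BV} A$ are $A$-shaped diagrams of $D$-algebras, so $d_!$ is the honest colimit over $A$ of $D$-algebras, hence $\mathbb{L}d_!$ is the homotopy colimit over $A$. Applying $\mathbb{L}d_!$ to the diagram $a \mapsto N(1^{F(a)})$ --- which, pushed into $D$-algebras via the structure maps, is exactly $a \mapsto N(D^{F(a)})$ up to the canonical weak equivalence $N((\oint 1)^{\oint F}) \to 1^F$ from Theorem~\ref{cofibrantreplacement} ensuring objectwise cofibrancy --- gives $\hocolim_A N(D^{F(a)})$.

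Assembling: $N(D^{\oint F}) \cong \mathbb{L}d_!(N(1^F)) \cong \mathbb{L}d_!\bigl(a\mapsto N(1^{F(a)})\bigr) \cong \hocolim_A\bigl(a \mapsto \mathbb{L}(g_a)_!(1)\bigr) \cong \hocolim_A N(D^{F(a)})$, where the middle isomorphisms use that $\mathbb{L}d_!$ commutes with the homotopy colimit presentation of diagram algebras and that $N((\oint 1)^{\oint F})$ is a cofibrant replacement of $N(1^F)$ whose components feed the homotopy colimit. I expect the main obstacle to be bookkeeping the two competing roles of ``$1$'' (terminal presheaf of polynomial monads, i.e. the constant presheaf at the terminal \emph{small category} $\mathrm{vs.}$ at the terminal \emph{polynomial monad}) and verifying that $d_!$ applied to an objectwise-cofibrant $A$-diagram of $D$-algebras genuinely computes the homotopy colimit --- this needs the identification of $\oint 1$-algebras with $A$-diagrams of $D$-algebras and a Reedy/projective cofibrancy check, which is where Theorem~\ref{cofibrantreplacement} and the simplicial model structure on $\Alg_D(SSet)$ do the real work.
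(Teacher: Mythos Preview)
Your overall strategy is right --- apply the preceding Proposition with a constant presheaf $G$ and identify $\mathbb{L}d_!$ with a homotopy colimit over $A$ --- but you have chosen the wrong constant value for $G$, and the resulting confusion never gets resolved.

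You take $G$ to be the constant presheaf at the terminal category $1$. With that choice the local classifier has components $1^{F(a)}$, not $D^{F(a)}$, and $\oint G \cong 1\otimes_{\scriptscriptstyle BV} A \cong A$, not $D\otimes_{\scriptscriptstyle BV} A$. (You invoke the Corollary to get $\oint 1 \cong D\otimes_{\scriptscriptstyle BV} A$, but that Corollary is about the constant presheaf at $D$, not at $1$.) Moreover, for the Proposition to apply you need $G$ to be a presheaf \emph{over} $D$, i.e.\ a map $1\to D$ of polynomial monads, and there is no canonical one; so the map $d:\oint G\to D$ you need is not even defined. Your chain of isomorphisms then tries to pass from $N(1^{F(a)})$ to $N(D^{F(a)})$ by sliding a $\mathbb{L}(g_a)_!$ through a homotopy colimit, but that step has no justification and is really trying to repair the wrong choice of $G$ after the fact.

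The paper's proof takes $G$ to be the constant presheaf at $D$ itself, using the given cocone $F(a)\to D$ as the map $\psi$. Then $G^F$ has components $D^{F(a)}$ on the nose, $\oint G \cong D\otimes_{\scriptscriptstyle BV} A$, its algebras are exactly $A$-diagrams of $D$-algebras, and $d_!$ is the ordinary colimit over $A$ in $\Alg_D$. The Proposition then reads $N(D^{\oint F}) \cong \mathbb{L}d_!(N(G^F)) \simeq \hocolim_A N(D^{F(a)})$ with no further bookkeeping. So the one genuine fix you need is: replace your ``constant terminal presheaf $1$'' by the constant presheaf at $D$. Once you do that, the entire second half of your argument (the push-forward gymnastics and the worry about two meanings of ``$1$'') disappears.
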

\begin{proof} We take $G$ to be the constant functor $G(a) = D.$  Then we have a natural transformation of presheaves $\psi:F\to G$ whose components are derived from the components of cocone of $F$ over $D.$ 
The left adjoint $d_!$ is then the colimit over $A$ in the category of $D$-algebras.

\end{proof}

\begin{corol}[Thomason] For a presheaf $F$ of small categories over $A$ the nerve of its Grothendieck construction is homotopy equivalent to the homotopy colimit of the  presheaf of nerves. 

\end{corol}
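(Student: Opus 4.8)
The plan is to obtain this corollary from Theorem \ref{thomason} by taking $D$ to be the terminal category $1$. Let $F : A \to \Cat$ be a presheaf of small categories. Using the full embedding $\Cat \hookrightarrow \PMon$ I regard each $F(a)$ as a polynomial monad (with identity middle map); since for small categories cartesian maps of polynomial monads are exactly functors, the unique maps $F(a) \to 1$ are cartesian and constitute a natural transformation from $F$ to the constant presheaf at $1$, so $F$ is a presheaf of polynomial monads over $D = 1$ (nothing has to be checked, $1$ being terminal in $\Cat$). Because $F$ is $\Cat$-valued, the twisted Boardman--Vogt product $\oint F$ agrees with the ordinary Grothendieck construction $\int F$ (as noted after Corollary \ref{intoint}), and is again a small category. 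Theorem \ref{thomason} then produces a weak equivalence
\[
N\big(1^{\oint F}\big) \;\simeq\; \hocolim_A N\big(1^{F(a)}\big)
\]
of simplicial $1$-algebras, i.e.\ of simplicial sets, the homotopy colimit being formed in $\Alg_1(\SSet) = \SSet$.

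What remains is to identify, for a small category $C$ with structure map $u : C \to 1$, the classifier $1^C$ with $C$ up to nerve: $N(1^C) \simeq N(C)$. By Proposition \ref{f_!(S^S)} we have $1^C \cong u_!(C^C)$, and since the (good) realisation functor commutes with the left adjoints $u_!$ this gives $N(1^C) \cong u_!\big(N(C^C)\big)$. By Corollary \ref{Nerve} the simplicial $C$-algebra $N(C^C)$ is cofibrant, and it is weakly equivalent to the terminal $C$-algebra because its value at $i$ is $N(C/i)$, which is contractible as $C/i$ has a terminal object. Hence $N(1^C) \simeq \LL u_!(1)$, where $1$ denotes the terminal simplicial $C$-algebra, i.e.\ the constant diagram on the point in $\SSet^C$. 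Finally $\LL u_!$ along $u : C \to 1$ is precisely the homotopy colimit functor $\SSet^C \to \SSet$, so $N(1^C) \simeq \hocolim_C(\ast) = BC \simeq N(C)$.

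It then remains to substitute. Taking $C = F(a)$ gives $N(1^{F(a)}) \simeq N(F(a))$, and taking $C = \oint F \cong \int F$ gives $N(1^{\oint F}) \simeq N(\int F)$; inserting both into the displayed equivalence above, and using that $\hocolim_A$ preserves objectwise weak equivalences, we obtain
\[
N\!\left(\textstyle\int F\right) \;\simeq\; \hocolim_A N\big(F(a)\big),
\]
which is the assertion. The main obstacle --- indeed the only non-formal step --- is the identification $N(1^C) \simeq N(C)$: this is where one unwinds the classifier/nerve machinery and recognises the derived left Kan extension of the terminal algebra along $C \to 1$ as the classifying space of $C$. Everything else is bookkeeping, together with the minor observation that Theorem \ref{thomason} is legitimately applicable to a presheaf whose values all happen to be ordinary categories.
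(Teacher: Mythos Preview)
Your proof is correct, but you have taken a considerably longer route than the paper. The paper's proof is one line: take $D=1$, and then observe that for any small category $C$ the classifier $1^{C}$ is \emph{isomorphic} to $C$ (not merely weakly equivalent after taking nerves). Applying this to $C=\oint F\cong\int F$ and to each $C=F(a)$ immediately identifies both sides of the displayed equivalence from Theorem~\ref{thomason}.

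The isomorphism $1^{C}\cong C$ follows directly from the codescent description of classifiers: with $T=1$ the free $1$-algebra functor is the identity and the truncated simplicial object reduces to the usual objects/morphisms/composable-pairs diagram of $C$; alternatively, it is forced by the universal property $\Cat(1^{C},X)\cong\Int_C(X)\cong\Cat(C,X)$. The paper already uses this identification implicitly in the proof of the classical Quillen Theorem~A (Corollary~5.4), where $N(f)$ is identified with the nerve of $1^{S}\to 1^{T}$. Your argument, by contrast, only recovers $N(1^{C})\simeq N(C)$ as a weak equivalence via Proposition~\ref{f_!(S^S)}, Corollary~\ref{Nerve}, and the computation $\LL u_!(1)\simeq\hocolim_C(\ast)\simeq N(C)$. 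This is valid, and it has the minor virtue of showing how the statement sits inside the general derived-functor picture, but it obscures the fact that nothing homotopical is happening at this step: the classifier over $1$ is already the category you started with.
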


\begin{proof} It is sufficient to take $D = 1.$  The classifier $1^{\oint F}$ is just the Grothendieck construction $\int F$ and the local classifier algebra $1^F$ is just $F$ itself.
\end{proof}

\begin{theorem}\label{recognition}  Let $F$ be a presheaf of polynomial monads over a polynomial monad $D.$ 
The following conditions are equivalent \begin{enumerate}
\item $\oint F \to  D$ is a homotopically cofinal functor;
\item $N(D^{\oint F})$ is contractible;
\item For any map of polynomial monads $D\to R$ the natural map $$\hocolim_A N(R^{F(a)}) \to N(R^D)$$
is a weak equivalence.
\end{enumerate}
\end{theorem}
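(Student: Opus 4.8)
The plan is to deduce Theorem \ref{recognition} by combining the Thomason-type identity of Theorem \ref{thomason} with the cofinality recognition machinery of Section 5 (Corollary \ref{QuillenA} and Theorem \ref{QTAPoly}). First I would establish the chain of equivalences by the cycle $(1)\Rightarrow(2)\Rightarrow(3)\Rightarrow(1)$, or more economically by playing off the already-proved equivalence between contractibility of $N(D^{\oint F})$ and cofinality of $\oint F\to D$, which is precisely the definition together with Corollary \ref{QuillenA}. Indeed, applying Corollary \ref{QuillenA} to the commutative triangle built from $t=\oint\psi:\oint F\to\oint G=D\otimes_{BV}\cdot$ — rather, to the triangle $\oint F\to D\stackrel{=}{\to}D$ viewed inside $\mathbf{PMon}/D$ — identifies $(1)$ with the statement that $N(D^f):N(D^{\oint F})\to N(D^D)$ is a weak equivalence; since $N(D^D)$ is contractible (it has a terminal internal algebra, so $D^D(i)=D/\!\!/i$-type categories each with terminal object $1_i$), this is exactly $(2)$.

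For $(2)\Leftrightarrow(3)$ I would use Theorem \ref{thomason} applied not to $D$ but to an arbitrary target. Given a map of polynomial monads $D\to R$, the presheaf $F$ over $D$ becomes, by composition, a presheaf over $R$; Theorem \ref{thomason} then yields a weak equivalence $N(R^{\oint F})\simeq\hocolim_A N(R^{F(a)})$. Meanwhile the natural map $\hocolim_A N(R^{F(a)})\to N(R^D)$ is, under this equivalence, identified with $N(R^f):N(R^{\oint F})\to N(R^D)$ induced by $f:\oint F\to D$ over $R$ (this identification is the content of the cocone structure and Proposition \ref{square}). So $(3)$ says precisely that $N(R^f)$ is a weak equivalence for every $D\to R$, which by Corollary \ref{QuillenA} (the equivalence of its conditions (1) and (2)) is equivalent to $N(D^{\oint F})$ being contractible, i.e.\ to $(2)$. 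The equivalence $(1)\Leftrightarrow(2)$ is just the definition of homotopically cofinal map together with the remark that a cartesian polynomial monad map $\oint F\to D$ need not exist — here it does, since $F$ is a presheaf over $D$ and the cocone provides the map $e:\oint F\to D$ — so condition (1) is literally `$f:\oint F\to D$ is homotopically cofinal', which by definition means $N(D^{\oint F})$ is contractible.

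The main obstacle I anticipate is the naturality bookkeeping in $(2)\Leftrightarrow(3)$: one must check that the weak equivalence produced by Theorem \ref{thomason} is compatible with the comparison maps to $N(R^D)$, i.e.\ that the square
\[
\xymatrix{
\hocolim_A N(R^{F(a)}) \ar[rr] \ar[dd]_{\sim} && N(R^D) \ar@{=}[dd] \\
\\
N(R^{\oint F}) \ar[rr]^{N(R^f)} && N(R^D)
}
\]
commutes up to homotopy. This follows by tracing through the proof of Theorem \ref{thomason}: there one takes $G$ the constant presheaf at $R$ (after pushing $F$ forward along $D\to R$), so that $d_!=\colim_A$ over $R$-algebras, and the cocone maps $F(a)\to R$ are exactly the ones inducing $\hocolim_A N(R^{F(a)})\to N(R^R)\to\ldots$; the factorization $\oint F\to\int G^F\to\oint G$ from the Remark after Corollary \ref{intoint}, combined with $\mathbb{L}d_!(N(G^F))\cong N(D^{\oint F})$ in the appropriate Proposition, pins down the comparison on the nose in $Ho(\Alg_R(SSet))$. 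Once this compatibility is in hand, all three equivalences are formal consequences of Corollary \ref{QuillenA} and the contractibility of absolute classifiers, and the proof is short.
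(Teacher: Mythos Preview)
Your proposal is correct and follows essentially the same route as the paper: the equivalence $(1)\Leftrightarrow(2)$ is the definition of homotopically cofinal, and $(2)\Leftrightarrow(3)$ is obtained by combining Theorem~\ref{thomason} (applied over an arbitrary $R$ via the composite $F(a)\to D\to R$) with Corollary~\ref{QuillenA}. The paper's proof is a two-line invocation of exactly these ingredients; your extra discussion of the naturality square is a reasonable expansion of what the paper leaves implicit, but the argument is the same.
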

\begin{proof} The equivalence of (1) and (2) is the definition of homotopically cofinal functor.
Now (2) is equivalent to (3) by Theorem \ref{thomason} and Corollary \ref{QuillenA}.  
\end{proof}

\section{Homotopy pushouts of classifiers}

For our application purpose  we will be mostly interested  in homotopy pushouts of classifiers. In other words we are going to consider presheaves of polynomial monads over the category $\Lambda$   with three objects and two nontrivial arrows:
$$2\longleftarrow 0 \longrightarrow 1.$$
Main question for us is then given a commutative square of polynomial monads (\ref{abcd}) over a polynomial monad $R$ when the square 
\begin{equation}\label{hpushouted}
			\xymatrix{
				N(R^A)\ar[rr]^{N(R^f)} \ar[dd]_{N(R^g)} && N(R^B)\ar[dd]^{N(R^F)} \\
				\\
				N(R^C) \ar[rr]^{N(R^G)} && N(R^D)
			}
\end{equation}
is a homotopy pushout square in the category of $R$-algebras?

\begin{example} 
In the square above let $F:\Lambda \to  \mathbf{PMon}$ is such that 
$F(0)=A,F(1) = B,F(2)=C$ are monoids and $D$ be the pushout of this diagram.  Let $R=1.$ Then
the map  $\hocolim_{\Lambda} N(R^{F(a)}) \to N(R^D)$ is the map from homotopy pushout of classifying spaces of this diagram of monoids to the classifying space of the pushout. This situation was considered by Fiedorowicz in \cite[Theorem 4.1]{F}\begin{footnote}{First author is grateful to Andrey Lazarev for pointing out to this Fiedorowicz's theorem.}\end{footnote}. It was proved that a sufficient condition for this map to be a weak equivalence is that $\mathbb{Z}[B]$ and $\mathbb{Z}[C]$ are flat $\mathbb{Z}[A]$-modules, where $\mathbb{Z}[M]$ is the monoid ring of a monoid $M.$ 
\end{example}

An example away from categories is one of the main topics in \cite{BB}:

\begin{example}
	Let $T$ be a polynomial monad with the set of colors $C$ and let $F(1) = T$ and $F(0) = F(2) = I$, where $I$ is the identity polynomial monad with the same set of colours $C$. The colimit of such diagram is $T = D$ again. The monad $\oint F$ in this case is the monad $T^{f,g}$ from \cite{BB}. Let $R = T$ then the colimit over the classifier $T^{T^{f,g}}$ `computes' the pushouts of algebras of $T$ along a free map of $T$-algebras. The homotopy type of $N(T^{T^{f,g}})$ can be  very nontrival.
\end{example}

\begin{defin} We will call a commutative square of polynomial monads (\ref{abcd}) homotopically cofinal
 if the equivalent conditions of Theorem \ref{recognition} are satisfied.  
\end{defin}
Thus for a homotopically cofinal square any square like (\ref{hpushouted}) is a homotopy pushout of $R$-algebras.

\begin{proposition} \label{dddd} \begin{enumerate}

\item For any polynomial monad $D$ the constant square 
\begin{equation*}\label{DDDD}
			\xymatrix{
				D\ar[r]^{id} \ar[d]_{id} & D\ar[d]^{id} \\
				D \ar[r]^{id} & D 
			}
			\end{equation*}
is homotopically cofinal.

\item If in a commutative diagram of polynomial monads
{
\begin{equation*}\label{DDDDDD}
			\xymatrix{
				A\ar[r]^{} \ar[d]_{} &C \ar[d]_{} \ar[r]^{}& E\ar[d]^{} 
				\\
				B \ar[r]^{} &D \ar[r]^{}  & F 
			}
			\end{equation*}}
the left square is homotopically cofinal, then the outer square is homotopically cofinal if and only if the right square is homotopically cofinal.

\item In a commutative cube of polynomial monads
 \[
\xymatrix{
A' \ar[dd] \ar[rr] \ar[rd] && B' \ar[dd] |!{[dl];[dr]}\hole \ar[rd] \\
& A \ar[dd] \ar[rr] && B \ar[dd] \\
C' \ar[rr] |!{[ru];[rd]}\hole \ar[rd] && D' \ar[rd] \\
& C \ar[rr] && D \ 
} 
\] 
\noindent let the back square be homotopically cofinal and let $A' \to A$, $B' \to B$ and $C' \to C$ homotopically cofinal morphisms. Then the front square is a homotopically cofinal square if and only if $D' \to D$ is homotopy cofinal.

\end{enumerate}
\end{proposition}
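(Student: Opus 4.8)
The plan is to reduce everything to the numerical characterisation of homotopically cofinal squares given by Theorem \ref{recognition}(2), namely that a square (\ref{abcd}) over $R$ is homotopically cofinal if and only if $N(R^{\oint F})$ is contractible for the associated presheaf $F:\Lambda\to\mathbf{PMon}$, or equivalently, by Theorem \ref{thomason}, the natural map $\hocolim_\Lambda N(R^{F(a)})\to N(R^D)$ is a weak equivalence for every $R$ under the square. For part (1), the presheaf $F$ is constant at $D$, so $\oint F \cong D\otimes_{\scriptscriptstyle BV}\Lambda$ and $D=\operatorname{colim}_\Lambda F$, hence $N(D^{\oint F})$ computes $\hocolim_\Lambda N(D^D)$; since $N(D^D)$ is contractible (it has a terminal object in each colour by the discussion after the absolute classifier construction) and $\Lambda$ has contractible nerve (it has a terminal object, namely $0$), the homotopy colimit is contractible. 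Alternatively one observes directly that the identity square's homotopy pushout comparison map $N(R^D)\cup_{N(R^D)}N(R^D)\to N(R^D)$ is the fold map, which is always a weak equivalence.

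For part (2), I would phrase both the left and outer squares as homotopically cofinal squares \emph{over} $F$ (the bottom-right monad), noting that a cofinal square over $R$ stays cofinal over any $R'$ receiving a map from $R$, since the condition in Theorem \ref{recognition}(3) is stated for all $R\to R'$. Then for an arbitrary map $F\to R$, the outer square's comparison map for $N(R^{(-)})$ factors as a composite of the left square's comparison map and the right square's comparison map, assembled via the two-out-of-three property for the homotopy pushouts: more precisely, pasting of homotopy pushout squares says that if the left of two horizontally adjacent squares is a homotopy pushout, then the outer rectangle is a homotopy pushout iff the right square is. Applying $N(R^{(-)})$ to the diagram and using that the left square gives a homotopy pushout (by cofinality and Theorem \ref{recognition}), the claim follows from the standard pasting lemma for homotopy pushouts in the model category $\Alg_R(SSet)$. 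The only subtlety is making sure "homotopically cofinal square over $F$" is the right reformulation, but this is immediate from the definitions since the conditions quantify over all maps out of the bottom-right monad.

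For part (3), the strategy is again to apply $N(R^{(-)})$ for an arbitrary map $D\to R$ from the bottom-right-front monad and analyse the resulting cube of simplicial $R$-algebras. The hypotheses give: the back square becomes a homotopy pushout (cofinality of the back square, Theorem \ref{recognition}), and the three maps $N(R^{A'})\to N(R^A)$, $N(R^{B'})\to N(R^B)$, $N(R^{C'})\to N(R^C)$ are weak equivalences (by Corollary \ref{QuillenA}, since $A'\to A$ etc.\ are homotopically cofinal, the induced maps of classifier nerves are weak equivalences after further application of $N(R^{(-)})$ — precisely the implication (1)$\Rightarrow$(2) of Corollary \ref{QuillenA} with the relevant triangles). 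The conclusion is then the standard "cube lemma": in a commutative cube where the back face is a homotopy pushout and three of the four connecting edges (from back to front) are weak equivalences, the front face is a homotopy pushout if and only if the fourth connecting edge $N(R^{D'})\to N(R^D)$ is a weak equivalence. Running this for all $R\to R$ receiving the front square and again invoking Theorem \ref{recognition} and Corollary \ref{QuillenA} to translate back into the language of homotopically cofinal squares and maps gives the result.

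The main obstacle I expect is bookkeeping around the quantifiers: "homotopically cofinal square" is defined with respect to a fixed ambient monad $R$, yet the maps being composed or stacked live over different monads, so one must carefully check that the cofinality hypotheses transfer to all the auxiliary monads $R$ appearing when one applies $N(R^{(-)})$. Once that is set up cleanly, parts (2) and (3) are just the pasting lemma and the cube lemma for homotopy pushouts in a (semi-)model category, which are standard, and part (1) is essentially trivial. A secondary technical point is ensuring that the comparison maps $\hocolim_A N(R^{F(a)})\to N(R^D)$ from Theorem \ref{thomason} are natural enough in the presheaf $F$ and in $R$ to make the pasting diagrams commute up to coherent homotopy; this follows from the naturality built into the proof of Theorem \ref{thomason} via the adjunction between the Grothendieck construction and the classifier functor, but it should be remarked on explicitly.
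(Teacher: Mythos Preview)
Your proposal is correct; parts (1) and (2) match the paper's argument almost verbatim (the paper simply says that for any $D\to R$ the constant square of nerves of classifiers is a homotopy pushout, and for (2) invokes the pasting lemma for homotopy pushouts after applying $N(R^{(-)})$). One small slip: in $\Lambda = (2\leftarrow 0\to 1)$ the object $0$ is \emph{initial}, not terminal, though of course the nerve is still contractible.

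For part (3) you take a genuinely different route from the paper. You unpack everything via condition (3) of Theorem~\ref{recognition}: apply $N(R^{(-)})$ to the whole cube for arbitrary $D\to R$, then invoke the standard cube lemma for homotopy pushouts in $\Alg_R(SSet)$. The paper instead stays at the level of polynomial monads and the $\oint$-construction: it forms the square
\[
\xymatrix{
\oint F' \ar[r]\ar[d] & \oint F \ar[d]\\
D' \ar[r] & D
}
\]
with $F,F'$ the two $\Lambda$-presheaves, shows that $\oint F'\to\oint F$ is homotopically cofinal by computing the classifier of this map via Theorem~\ref{cofibrantreplacement} (its local classifier $F^{F'}$ has components $F(i)^{F'(i)}$, each contractible by the hypothesis on the three edges), and then runs a two-out-of-three argument among the maps $\oint F'\to D'\to D$ and $\oint F'\to\oint F\to D$. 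Your approach is more elementary in that it avoids the local-classifier machinery of Theorem~\ref{cofibrantreplacement} altogether and reduces to a standard model-categorical lemma; the paper's approach has the virtue of exercising the $\oint$-formalism and showing that the statement is really a two-out-of-three property for homotopically cofinal \emph{maps} of polynomial monads, without ever descending to a particular $\Alg_R(SSet)$. Your worry about the quantifiers is well-placed but, as you note, harmless: all the relevant squares and maps are over $D$ by composition, so Corollary~\ref{QuillenA} and Theorem~\ref{recognition} apply uniformly.
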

\begin{proof} 
For the first statement we observe similarly that  for any $D\to R$ we get a constant square of nerves of classifiers which is a homotopy pushout in the category of simplicial $R$-algebras.

To prove the second statement it is enough to consider an arbitrary map of polynomial monads $F\to R$. Then we obtain a commutative diagram of classifiers over $R$. From standard properties of homotopy pushouts we get that after application of nerve the outer square is a homotopy pushout of simplicial $R$-algebras if and only if the right square is a homotopy pushout. So Theorem \ref{recognition} implies the result.

Finally, for the third statement let $F:\Lambda \to  \mathbf{PMon}$ be the functor which constitutes the left corner of the front square. Let $F':\Lambda \to  \mathbf{PMon}$ be the corresponding functor for the back square. Then we have a commutative square of polynomial monads maps:
\begin{equation*}\label{ooDD}
			\xymatrix{
				\oint F'\ar[r]^{} \ar[d]_{} & \oint F\ar[d]^{} \\
				D' \ar[r]^{} & D 
			}
			\end{equation*}

The nerve of the classifier of the map $\oint F' \to \oint F$ is the cofibrant replacement of the nerve of the local classifier $F^{F'}$ which is contractible because $F(i)\to F'(i)$ is homotopically cofinal for each $i = 0,1,2.$  So, by Corollary \ref{QuillenA} the map $$N(D^{\oint F'})\to N(D^{\oint F})$$ is a weak equivalence. On the other hand, since the back square is homotopically cofinal  $\oint F'\to D'$ is a homotopically  cofinal map by definition. Therefore, the map $$N(D^{\oint F'})\to N(D^{D'})$$ is also a weak equivalence. The composite of two homotopically cofinal maps is homotopically cofinal map again and so if one of the two maps $D' \to D$ or $\oint F \to D$ is homotopically cofinal,  $\oint F' \to D$ is homotopically cofinal and the nerve of its classifier is contractible which implies that the other map must also be homotopically cofinal.  

\end{proof}

\begin{remark}\label{desc}  It is instructive to give a description of the classifier $D^{\oint F}, $ for $F:\Lambda \to Poly$ as in the square (\ref{abcd}).  
According to general theory of classifiers from \cite[Section 6.3]{BB} an object of this categorical $D$-algebra are given by 
specifying of \begin{enumerate} 
\item an operation  $\beta$ of the polynomial monad $D;$ 
 \item a labelling  of the sources of $\beta$ by numbers $0,1,2.$  
 \end{enumerate}
Morphisms are generated by morphisms in $D^A,$ when we multiply operations in $D$ with $0$-labelled sources and, similarly, by morphisms in $D^B$ and $D^C$ when we multiply operations with $1$ and $2$ labelled sources respectively. We also have two other type of generators: one can replace a source labelled by $0$ by a source labelled by $1$ or $2.$ There are relations on this type of morphisms which comes from the requirement that  $f$ and $g$ in (\ref{abcd}) are polynomial monad maps.  
\end{remark}

\section{Mapping spaces between pointed algebras of polynomial monads}  

Let $T$ be a polynomial monad and let $T_*$  be the monad whose category of algebras is the category of pointed $T$-algebras; that is, the comma-category
$1/\Alg_T.$ 
There is a map of monads $$u:T\to T_*$$ such that the restriction functor $u^*:\Alg_{T_*}\to\Alg_{T}$
`forgets the point'. Analogously, let $T_{**}$ be the category of double pointed algebras; that is, the category $1\coprod 1/ \Alg_T.$  We have a pushout of monads:
\begin{equation}\label{double}
			\xymatrix{
				T\ar[rr]^{u} \ar[dd]_{u} && T_*\ar[dd]^{} \\
				\\
				T_* \ar[rr]^{} && T_{**}
			}
\end{equation}
We can consider this pushout as a pushout of monads over $T_*$ because the identity $id:T_*\to T_*$ induces a map of monads $$U:T_{**}\to T_*.$$ such that the restriction functor $\Alg_{T_*}\to\Alg_{T_{**}}$ `doubles the point'.

\begin{theorem}[Formal delooping Theorem]\label{pointedalgebras}  Assume $T_*$ is a polynomial monad and the square (\ref{double}) is homotopically cofinal. Then, for any pointed simplicial $T$-algebra $X$, there is a weak equivalence of simplicial sets:
$$\Omega Map_{\Alg_T}(1,u^*X) \sim Map_{\Alg_{T_{**}}}(1,U^*X)$$
where $\Omega  Map_{\Alg_T}(1,u^*X)$ is the loop space with the base point given by the point $1\to X$ in the $T$-algebra $X.$
\end{theorem}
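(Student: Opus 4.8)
The plan is to deduce the delooping equivalence from the homotopy-cofinality of the square (\ref{double}) together with the mapping-space characterisation of homotopically cofinal squares, applied to a suitably chosen target algebra. The first step is to fix a pointed fibrant simplicial $T$-algebra $X$, which by adjunction corresponds to a $T_*$-algebra that we still call $X$, and to unpack what the homotopy-pushout condition gives us. Since the square (\ref{double}) is homotopically cofinal (a square over $T_*$), Theorem \ref{recognition} applied with the base polynomial monad $T_*$ and the map $T_*\to T_*$ being the identity tells us that for any $T_*$-algebra the diagram of nerves of classifiers
\[
\xymatrix{
N(T_*^{T})\ar[rr] \ar[dd] && N(T_*^{T_*})\ar[dd] \\
\\
N(T_*^{T_*}) \ar[rr] && N(T_*^{T_{**}})
}
\]
is a homotopy pushout of simplicial $T_*$-algebras. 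Now $T_*^{T_*}$ has a terminal object, hence $N(T_*^{T_*})$ is contractible, so this homotopy pushout exhibits $N(T_*^{T_{**}})$ as the (homotopy) suspension $\Sigma N(T_*^{T})$ of $N(T_*^{T})$ in the pointed category of simplicial $T_*$-algebras (pointed by the canonical internal-algebra component, i.e.\ the image of the unit).

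The second step is to translate these classifier statements back into mapping spaces using Corollary \ref{Nerve} and the simplicial-model-category computation of mapping spaces. For $X$ fibrant we have $Map_{\Alg_T}(1,u^*X)\cong \underline{\Alg_{T_*}}(\mathbb{L}u_!(1),X)\sim \underline{\Alg_{T_*}}(N(T_*^{T}),X)$, and similarly $Map_{\Alg_{T_{**}}}(1,U^*X)\sim \underline{\Alg_{T_*}}(N(T_*^{T_{**}}),X)$. Here I use that $u:T\to T_*$ and $U:T_{**}\to T_*$ are maps over $T_*$, that $N(T_*^{T})$ and $N(T_*^{T_{**}})$ are cofibrant simplicial $T_*$-algebras (Corollary \ref{Nerve}), and that $X$ is fibrant. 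Applying $\underline{\Alg_{T_*}}(-,X)$ to the homotopy pushout above and using that this contravariant simplicial hom sends homotopy pushouts to homotopy pullbacks of simplicial sets, we get a homotopy pullback square whose terminal corner is $\underline{\Alg_{T_*}}(N(T_*^{T_{**}}),X)\sim Map_{\Alg_{T_{**}}}(1,U^*X)$, two of whose other corners are $\underline{\Alg_{T_*}}(N(T_*^{T_*}),X)$ — contractible, since $N(T_*^{T_*})$ is contractible and $X$ is fibrant — and whose initial corner is $\underline{\Alg_{T_*}}(N(T_*^{T}),X)\sim Map_{\Alg_T}(1,u^*X)$. A homotopy pullback with two contractible legs over a common corner identifies the far corner with the loop space of that initial corner based at the distinguished point; so $Map_{\Alg_{T_{**}}}(1,U^*X)\sim \Omega Map_{\Alg_T}(1,u^*X)$, with the base point being exactly the point $1\to X$ as claimed.

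The main obstacle is keeping careful track of the base points and the pointedness of everything: the square (\ref{double}) is a pushout of monads under $T$, and to get the loop space (rather than a based path space that is merely a homotopy fibre over an arbitrary point) one must check that the two copies of $N(T_*^{T_*})$ really are contractible \emph{through the distinguished basepoint} and that the two maps $N(T_*^{T})\to N(T_*^{T_*})$ induced by $u$ agree up to the relevant homotopy, so that the homotopy pullback is genuinely the based loop space on $Map_{\Alg_T}(1,u^*X)$ at the point determined by $X$ being pointed. A secondary technical point is that the transfer of the model (or semi-model) structure to $\Alg_{T_*}$ and the fact that $\underline{\Alg_{T_*}}(-,-)$ is a Quillen bifunctor taking homotopy pushouts in the cofibrant variable to homotopy pullbacks need the simplicial-model-category hypotheses recalled just before the theorem; these are mild but must be invoked. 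Once these are in place the argument is entirely formal, reduced to Theorem \ref{recognition}, Corollary \ref{Nerve}, and the standard behaviour of mapping spaces with respect to homotopy pushouts.
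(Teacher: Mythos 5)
Your argument follows exactly the paper's strategy (homotopy pushout of classifiers over $T_*$, dualize by $\underline{\Alg_{T_*}}(-,\mathrm{fib}(X))$ to a homotopy pullback of simplicial sets, identify the four corners via $\LL(-)_!(1)\cong N(T_*^{(-)})$ and adjunction, then conclude), and the translation of the corners into the mapping spaces $Map_{\Alg_T}(1,u^*X)$, $Map_{\Alg_{T_*}}(1,X)$, $Map_{\Alg_{T_{**}}}(1,U^*X)$ is correct. However, there is a genuine gap at the step you flagged as the crux: you claim that $\underline{\Alg_{T_*}}(N(T_*^{T_*}),X)$ is contractible \emph{because} $N(T_*^{T_*})$ is contractible and $X$ is fibrant. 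That is not a valid inference — a simplicial hom from a cofibrant, contractible (i.e., weakly terminal) object to a fibrant object need not be contractible (compare $\underline{\SSet}(\Delta^0,Y)=Y$). The correct reason, which is the single conceptual ingredient your write-up is missing, is that $\Alg_{T_*}\cong 1/\Alg_T$ is a \emph{pointed} category, so the terminal $T_*$-algebra $1$ is simultaneously initial; hence $N(T_*^{T_*})\simeq\mathrm{cof}(1)$ is a cofibrant replacement of the zero object and $\underline{\Alg_{T_*}}(N(T_*^{T_*}),\mathrm{fib}(X))\sim Map_{\Alg_{T_*}}(1,X)$ is a point. Your paragraph on basepoint bookkeeping gestures in the right direction (contractibility ``through the basepoint''), but it never names this fact, and without it the identification of the homotopy pullback as $\Omega Map_{\Alg_T}(1,u^*X)$ does not go through. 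Once this observation is inserted, the rest of your proof is correct and coincides with the paper's.
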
   
\begin{proof} By assumption the square (\ref{double}) satisfies the conditions of Theorem \ref{recognition}. Therefore we have a homotopy pushout of nerves of classifiers over $T_*.$ For a fibrant replacement $fib(X)$ of a $T_*$-algebra $X$, we then have a homotopy pullback 
\[
		\xymatrix{
			\underline{\Alg_{T_*}}(N(T_*^{{T_{**}}}),{fib(X)})  \ar[rr] \ar[dd] && \underline{\Alg_{T_*}}(N(T_*^{{T_*}}),{fib(X)}) \ar[dd] \\
			\\
			\underline{\Alg_{T_*}}(N(T_*^{{T_*}}),{fib(X)}) \ar[rr]&& \underline{\Alg_{T_*}}(N(T_*^{{T}}),{fib(X)})
		}
\]
 By adjunctions $$\underline{\Alg_{T_*}}(N(T_*^{{T_{**}}}),{fib(X)}) \sim Map_{\Alg_{T_{**}}}(1,U^*X),$$ 
 $$\underline{\Alg_{T_*}}(N(T_*^{{T}}),{fib(X)}) \sim  Map_{\Alg_T}(1,u^*X)  $$ and 
 $$\underline{\Alg_{T_*}}(N(T_*^{{T_*}}),{fib(X)}) \sim  Map_{\Alg_{T_*}}(1,X) . $$
  But, in the category of pointed $T$-algebras, the terminal algebra is also the initial object so the space $Map_{\Alg_{T_*}}(1,X) $ is contractible. This completes the proof. 
\end{proof}

\begin{remark} The condition that the monad of pointed algebras is polynomial is not trivial. This is true for any tame monad in the sense of Batanin and Berger \cite{BB}. Yet, for example, it does not hold for the monad for symmetric operads.
\end{remark} 

Let $T$ be a polynomial monad with set of colours $I$ and let $i\in I.$ Let $Id$ be the identity polynomial monad
on $\Set$, and let $Id_I$ be the identity polynomial monad on $\Set^I.$ 
There is a cartesian map of polynomial monads
$$i: Id\to Id_I$$
which sends the unique colour to the element $i$ and the unique operation to the  identity on $i.$ 
(Both monads are small categories, the map $i$ corresponds to the functor from the terminal category which picks up the object $i.$)

We also have a one coloured polynomial monad $Id_+$  which `adds a point' to each set $X.$ 
Explicitly,  $Id_+$ is given by the following polynomial  
\[\label{I+}
			\xymatrix{
				1 && 1 \ar[ll]_-s \ar[rr]^-p &&2 \ar[rr]^-t && 1
			}
\]
where $2 $ is the set with two elements  $\{0,1\}$ and $p$ sends $1$ to $0.$ 
The algebras of $Id_+$ are pointed sets. Let now $T_+$ be the pushout of polynomial monads
\begin{equation} \label{ieta}
			\xymatrix{
				Id\ar[rr]^{i} \ar[dd]_{\epsilon} &&Id_I \ar[rr]^{\eta} && T\ar[dd]^{} \\
				\\
				Id_+ \ar[rrrr]^{} &&  && T_{+} \ .
			}
		\end{equation}

The algebras of $T_+$ are, therefore, the  algebras of $T$ equipped with a marked point in degree $i\in I.$ We called them {\it $i$-pointed $T$-algebras.}

Now let $$f:T\to S$$ be a map of polynomial monads. 
Suppose that the composite $i\circ \eta\circ f: Id\to S$ can be factorised through the unit of $Id_+.$ 
We then have a map of polynomial monads $$F:T_+\to S.$$ 
This factorisation condition says that the $\phi(i)$-th component $X_{\phi(i)}$ of any 
$S$-algebra $X$ has a `marked' point $1\to X_i$ and the restriction functor $f^*$ preserves this canonical point. 
\begin{theorem}[Formal Fibration sequence Theorem]\label{+point} If the square (\ref{ieta}) is homotopically cofinal then
for any simplicial $S$-algebra $X$, there is a fibration sequence
$$Map_{\Alg_{T_+}}(1, F^*X)\to Map_{\Alg_T}(1, f^*X) \to fib(X_i),$$
where  $fib(X_i)$ is a fibrant replacement for the simplicial set $X_i.
$\end{theorem}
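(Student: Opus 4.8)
The plan is to imitate the proof of the Formal Delooping Theorem \ref{pointedalgebras}, but now working over the base polynomial monad $T$ rather than over $T_*$. First I would observe that the square (\ref{ieta}) is a pushout of polynomial monads over $T$: the identity $id:T\to T$ together with the maps $\eta:Id_I\to T$ and the canonical map $T_+\to T$ (forgetting the marked point, induced by the map $Id_+\to Id$ collapsing the extra point) exhibit (\ref{ieta}) as a pushout in $\mathbf{PMon}/T$. By hypothesis this square is homotopically cofinal, so by Theorem \ref{recognition} applying the classifier $2$-functor $T^{(-)}$ and then the nerve produces, for any map $T\to R$, a homotopy pushout of simplicial $R$-algebras. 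I would take $R = S$ via $f:T\to S$, yielding a homotopy pushout square of nerves of classifiers over $S$:
\[
\xymatrix{
N(S^{Id}) \ar[rr] \ar[dd] && N(S^{Id_I}) \ar[dd] \\
\\
N(S^{Id_+}) \ar[rr] && N(S^{T_+})
}
\]
where I have used $S^{Id_I\to T\to S}=S^{T}$ along the bottom-right composite and similarly collapsed the composites along the top and left; more precisely, the relevant square has corners $N(S^{Id})$, $N(S^{Id_I})$ (which, as a small category $Id_I$ mapping to $S$, computes the $i$-indexed piece, i.e. $X_i$ after mapping in), $N(S^{T_+})$ and $N(S^{T})$.

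The second step is to map this homotopy pushout into a fibrant replacement $fib(X)$ of $S$ and use that $\underline{\Alg_S}(-,fib(X))$ sends homotopy pushouts to homotopy pullbacks. By the adjunctions between classifiers and the Grothendieck construction (Corollary \ref{Nerve} and the argument in Theorem \ref{QTAPoly}), together with the factorisation hypothesis giving the map $F:T_+\to S$, I would identify the four corners:
\[
\underline{\Alg_S}(N(S^{T_+}),fib(X)) \sim Map_{\Alg_{T_+}}(1,F^*X), \qquad
\underline{\Alg_S}(N(S^{T}),fib(X)) \sim Map_{\Alg_{T}}(1,f^*X),
\]
\[
\underline{\Alg_S}(N(S^{Id_I}),fib(X)) \sim fib(X_i), \qquad
\underline{\Alg_S}(N(S^{Id_+}),fib(X)) \sim \text{(a contractible or point-reflecting space)}.
\]
The key identification is $\underline{\Alg_S}(N(S^{Id_I}),fib(X)) \sim fib(X_i)$: since $Id_I$ is the identity monad on $\Set^I$ and the map $Id_I\to S$ picks out the colour $i$, an internal $Id_I$-algebra in $S$ that maps appropriately is nothing but a point of $X_i$, so mapping the corresponding classifier into $fib(X)$ computes (a fibrant replacement of) $X_i$. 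Likewise, because $Id_+$ has pointed sets as algebras and $i\circ\eta\circ f$ factors through the unit of $Id_+$, the bottom-left corner $\underline{\Alg_S}(N(S^{Id_+}),fib(X))$ becomes contractible — the marked point provides a canonical, essentially unique, lift — exactly as $Map_{\Alg_{T_*}}(1,X)$ was contractible in Theorem \ref{pointedalgebras}. A homotopy pullback square with one corner contractible is precisely a fibration sequence, giving
\[
Map_{\Alg_{T_+}}(1,F^*X)\to Map_{\Alg_T}(1,f^*X)\to fib(X_i),
\]
as claimed.

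The main obstacle I anticipate is bookkeeping the various classifiers correctly: one must check that applying $T^{(-)}$ to the pushout square (\ref{ieta}) over $T$ and then pushing along $f:T\to S$ (using that classifiers commute with colimits, the Corollary after Proposition on adjointness, together with Proposition \ref{f_!(S^S)}) really produces the square with corners $N(S^{Id})$, $N(S^{Id_I})$, $N(S^{Id_+})$, $N(S^{T_+})$ and not some variant, and that the identification of $\underline{\Alg_S}(N(S^{Id_+}),fib(X))$ as contractible genuinely uses the factorisation hypothesis rather than being automatic. A secondary technical point is that all of this is happening at the level of simplicial $S$-algebras, so one should either assume a full transferred model structure (true for simplicial sets) or invoke the semi-model structure of \cite{WY}, and one must ensure the nerves of the classifiers appearing are cofibrant (Corollary \ref{Nerve}) so that the simplicial hom spaces compute the correct homotopy mapping spaces. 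Once these identifications are in place, the deduction of the fibration sequence from the homotopy pullback is formal.
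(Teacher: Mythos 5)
Your overall strategy matches the paper's: apply Theorems \ref{recognition} and \ref{thomason} to the homotopically cofinal square (\ref{ieta}), obtain a homotopy pushout of nerves of classifiers over $S$, map this into a fibrant replacement $fib(X)$ to turn it into a homotopy pullback of simplicial mapping spaces, and read off the fibration sequence once the four corners are identified. The execution, however, contains a concrete error in the identification of the base corner, together with some inconsistent bookkeeping.

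The four corners of the homotopy pushout are $N(S^{Id})$, $N(S^T)$, $N(S^{Id_+})$ and $N(S^{T_+})$; the monad $Id_I$ is \emph{not} a corner of (\ref{ieta}) but merely an intermediate stage of the composite $Id \to Id_I \to T$ along the top. Your key claim
\[
\underline{\Alg_S}\bigl(N(S^{Id_I}),fib(X)\bigr) \sim fib(X_i)
\]
is therefore wrong on two counts. First, it is $N(S^{Id})$, not $N(S^{Id_I})$, that yields $fib(X_i)$: by adjunction
\[
\underline{\Alg_S}\bigl(N(S^{Id}),fib(X)\bigr) \cong \underline{\Alg_{Id}}\bigl(N(Id^{Id}), i^*\eta^*f^*(fib(X))\bigr) \cong fib(X_i),
\]
since $N(Id^{Id})$ is a point. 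Second, your heuristic is reversed: the map $\eta:Id_I\to T$ is the unit of $T$ and does not single out the colour $i$; it is the map $i:Id\to Id_I$ that picks out colour $i$. Mapping out of $N(S^{Id_I})$ would give the product $\prod_{j\in I}fib(X_j)$, not $fib(X_i)$, and the resulting square would not encode the claimed fiber sequence. Your corner lists are also internally inconsistent (at one point you list $N(S^{Id})$, $N(S^{Id_I})$, $N(S^{T_+})$, $N(S^T)$ as the corners, omitting $N(S^{Id_+})$, and elsewhere you use five distinct classifiers for four corners). Your argument that $\underline{\Alg_S}\bigl(N(S^{Id_+}),fib(X)\bigr)$ is contractible is essentially correct — $1$ is a zero object in pointed sets — though the paper is more precise: it computes $Id_+^{Id_+}$ as the walking arrow (pointed interval), so this corner is the path space over $fib(X_i)$ and the bottom arrow of the homotopy pullback is literally the path fibration. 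Once you replace $Id_I$ by $Id$ in the key identification and match the corners correctly, your argument reproduces the paper's proof.
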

 \begin{proof} 
Let $fib(X)$ be the fibrant replacement of the $S$-algebra $X.$ 
Observe then, that the $i$-th component $fib(X)_i$ is also a fibrant replacement $fib(X_i)$  for the simplicial set $X_i.$
 We have the following homotopy pushout of nerves of classifiers
 \[
			\xymatrix{
				N(S^{Id})\ar[rr]^{} \ar[dd]_{} && N(S^T)\ar[dd]^{} \\
				\\
				N(S^{Id_+}) \ar[rr]^{} && N(S^{T_+}) \ .
			}
			\]
We then have a homotopy pullback of simplicial sets    
\begin{equation}\label{homotopypb}
		\xymatrix{
			\underline{\Alg_S}(N(S^{{T_+}}),fib{X})  \ar[rr] \ar[dd] && \underline{\Alg_S}(N(S^{{T}}),fib{X}) \ar[dd] \\
			\\
			\underline{\Alg_S}(N(S^{{Id_+}}),fib{X}) \ar[rr]&& \underline{\Alg_S}(N(S^{{Id}}),fib{X}) \ .
		}
		\end{equation} 
  By adjunction { we have a simplicial set isomorphism}
   { $$\underline{\Alg_S}(N(S^{{Id}}),fib{X}) \cong \underline{\Alg_{Id}}(N(Id^{Id}), i^* \eta^* f^*(fib{X})) \ .$$}
   
   The space 
   { $i^* \eta^* f^*(fib{X})$} is just the simplicial set $fib(X_i)$ and we have  
  $$\underline{\Alg_{Id}}(N(Id^{Id}), 
  { i^* \eta^* f^*(fib{X})})\cong Map_{\Alg_{Id}}(1, fib(X_i)) \sim fib(X_i). $$
  Analogously:
  $$\underline{\Alg_S}(N(S^{{Id_+}}),fib{X}) \cong   \underline{\Alg_{Id_+}}(N(Id_+^{Id_+}), \alpha^*(fib{X}))$$ 
  where $\alpha:Id_+\to S.$ 
  It can be seen by a direct calculation or by the universal property that the classifier $Id_+^{Id_+}$  is just the pointed category with two objects $0$ (a point) and $1$, {and} one nontrivial arrow $0\to 1.$ The nerve of this category is a pointed simplicial interval. Hence,
 $ \underline{\Alg_{Id_+}}(N(Id_+^{Id_+}), \alpha^*(fib{X}))$ is the path space over $fib(X_i).$

These calculations show that the bottom arrow in the homotopy pullback (\ref{homotopypb}) is the classical path fibration over $fib(X_i).$ 
The rest of the proof follows from the usual adjunction argument.  \end{proof}

 \begin{remark} A special case of this  Theorem  is when $F$ is an identity map. Then, for any $i$-pointed $T$-algebra, we have a fibration sequence
 $$Map_{\Alg_{T_+}}(1, X)\to Map_{\Alg_T}(1, X) \to fib(X_i),$$ 
 where we skip notation for the forgetful functor from $i$-pointed $T$-algebras to $T$-algebras. 
 This is a conceptual explanation of Theorem \ref{+point}.
  \end{remark}
  

 \part{Applications}
 
\section{Multiplicative operads, bimodules and weak bimodules}

The non-symmetric operad which is equal to the unit $e$ in each degree is called {\it associativity operad} and is denoted by $Ass.$ 
If $\Ee$ is a cartesian category then $Ass = 1$ is the terminal object in the category of non-symmetric operads $\mathrm{NOp}.$  
\begin{remark}
		In the literature, $Ass$ is often used to denote the symmetrised version of our associative operad.
	\end{remark}
\begin{defin}
		A \emph{multiplicative non-symmetric operad} is a non-symmetric operad $\mathcal{O}$ together with an operadic morphism
		\[
		Ass \to \mathcal{O} \ .
		\]
	\end{defin}
The category of multiplicative non-symmetric operads  $  { \mathrm{NOp}_*}$ is the category $Ass/\mathrm{NOp} $. 	
We have a forgetful functor $$u^*:  { \mathrm{NOp}_*}\to \mathrm{NOp} \ .$$	

\begin{defin}
			Let $\mathcal{A}$ and $\mathcal{C}$ be two non-symmetric operads. An $\mathcal{A}\text{-}\mathcal{C}$-\emph{bimodule}  in a symmetric monoidal category $\Ee$ is given by
			\begin{itemize}
				\item an object $\mathcal{B}_n$ for all $n \geq 0$
				\item morphisms
				\[
				\mathcal{A}_k \otimes \mathcal{B}_{n_1} \otimes \ldots \otimes \mathcal{B}_{n_k} \to \mathcal{B}_{n_1 + \ldots + n_k}
				\]
				called left actions
				\item morphisms
				\[
				\mathcal{B}_k \otimes \mathcal{C}_{n_1} \otimes \ldots \otimes \mathcal{C}_{n_k} \to \mathcal{B}_{n_1 + \ldots + n_k}
				\]
				called right actions
			\end{itemize}
			satisfying the obvious analogue of the axioms for non-symmetric operad and a compatibility condition between left and right actions.
		\end{defin}
	\begin{remark}\label{0composition}  Notice that in the definition of left action one can take $k=0$ and so an empty product of $\mathcal{B}$s; that is, the tensor unit $e\in \Ee $. 
	On the right hand side we will have then an empty sum of natural numbers that is $0.$ 
	So we have a map $\mathcal{A}_0\to \mathcal{B}_0$ as one of the structure operations 
	for the left $\mathcal{A}$-module. 
	\end{remark}
	
\begin{remark}\label{assbimod} It is not hard to see that the left  $Ass$-module on a family $\mathcal{B}_n\in \Ee$ is given by an associative pairing:
$$\mathcal{B}_p\times \mathcal{B}_q\to \mathcal{B}_{p+q}$$ 
which is unital with respect to the unit $Ass_0\to \mathcal{B}_0.$ 

The right $Ass$-module structure is the same as a structure of a functor  $\mathcal{B}_n = \mathcal{B}([n])$ on the subcategory  $\Delta_{surj}\subset \Delta $ of  order-preserving surjections. For the $Ass\text{-}Ass$-bimodule these structures are required to be compatible in the obvious sense. 

\end{remark}

Recall that non-symmetric operads can be   defined in terms of $\circ_i$-operations \cite[Definition 11]{Markl}. The $\circ_i$-operations are obtained as the composites :
		\begin{equation}\label{circ}
			\mathcal{O}_k \otimes \mathcal{O}_n = \mathcal{O}_k \otimes e \otimes \ldots \otimes \mathcal{O}_n \otimes \ldots \otimes e \xrightarrow{1 \otimes \epsilon \otimes \ldots \otimes 1 \otimes \ldots \otimes \epsilon} \mathcal{O}_k \otimes \mathcal{O}_1 \otimes \ldots \otimes \mathcal{O}_n \otimes \ldots \otimes \mathcal{O}_1 \xrightarrow{m} \mathcal{O}_{k+n-1}
		\end{equation}

	\begin{defin}
		Let $\mathcal{A}$ and $\mathcal{C}$ be two non-symmetric operads.  A \emph{weak $\mathcal{A}-\mathcal{C}$-bimodule} $\mathcal{W}$  in a symmetric monoidal category $\Ee$ is given by
		\begin{itemize}
			\item an object $\mathcal{W}_n$ in $\Ee$ for all $n \geq 0$
			\item for $i = 1,\ldots,k$, morphisms
			\[
			\circ_i: \mathcal{A}_k \otimes \mathcal{W}_n \to \mathcal{W}_{k+n-1}
			\]
			called left action
			\item for $i = 1,\ldots,k$, morphisms
			\[
			\bullet_i: \mathcal{W}_k \otimes \mathcal{C}_n \to \mathcal{W}_{k+n-1}
			\]
			called right action
		\end{itemize}
		satisfying the analogue of the axioms for non-symmetric operads in terms of $\circ_i$-operations and a compatibility condition again.
	\end{defin}

\begin{remark}\label{weakbimod}It is easy to prove that a weak $Ass\text{-}Ass$-bimodule is the same as a functor $\Delta \to \Ee$; that is, a cosimplicial object in $\Ee.$ 
\end{remark}	
	
We use  the notations $\mathrm{Bimod}$ and $\mathrm{Wbimod}$ for the categories of $Ass\text{-}Ass$-bimodules and weak $Ass\text{-}Ass$-bimodules respectively.

There is a  forgetful functor $$v^*:{\mathrm{NOp}_*} \to \mathrm{Bimod}.$$ 
The category of pointed bimodules $\mathrm{Bimod}_*$ is the category $v^*Ass/\mathrm{Bimod}$. 	We have two forgetful functors $${b}^*: \mathrm{Bimod}_*\to \mathrm{Bimod}$$ and $${w}^*:\mathrm{Bimod}_* \to \mathrm{Wbimod}.$$

\begin{proposition} \begin{enumerate} \item There are polynomial monads $NOp,Bimod$ and $WBimod$ whose categories of algebras are isomorphic to the categories
$\mathrm  NOp,Bimod$ and $\mathrm WBimod$ respectively. 
Moreover, the functors ${{ v}}^*$ and ${ w}^*$ are isomorphic to the restriction functors along the maps of polynomial monads:
$${{ v} }:Bimod\to NOp_*.$$
and 
$${{w}}:WBimod \to Bimod_*.$$ 

\item The monads $NOp$ and $Bimod$ satisfy the conditions of Theorem \ref{pointedalgebras}.  
The functors $u^*$ and ${ b}^*$ are induced by the maps of polynomial monads
$$u: NOp \to NOp_*$$ and $${ b}:Bimod\to Bimod_* \ .$$ 
 
  \end{enumerate} 
  
\end{proposition}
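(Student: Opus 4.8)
The plan is to treat each claim in turn, relying on the general machinery of polynomial monads developed in Part 1. For part (1), the key point is that $\mathrm{NOp}$, $\mathrm{Bimod}$ and $\mathrm{WBimod}$ are all categories of algebras with finitary operations governed by combinatorial data (planar trees, pairs of trees with a left/right partition, cosimplicial-type data), so one writes down the corresponding polynomials explicitly. For $NOp$ the polynomial is the one already recalled in Example \ref{example3}. For $Bimod$ one takes the set of operations to be planar trees with a two-level structure — an ``upper'' planar tree encoding the left $Ass$-action grafted onto ``lower'' corollas encoding the right $Ass$-action — with colours $\mathbb{N}$ (arities); concretely one can read this off from Remark \ref{assbimod}, where a left $Ass$-module is an associative pairing $\mathcal{B}_p\times\mathcal{B}_q\to\mathcal{B}_{p+q}$ and a right $Ass$-module is a $\Delta_{surj}$-functor. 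For $WBimod$ one uses the $\circ_i/\bullet_i$ presentation and Remark \ref{weakbimod}: a weak $Ass$-$Ass$-bimodule is exactly a cosimplicial object, so $WBimod$ is the polynomial monad whose algebras are functors $\Delta\to\Ee$, i.e.\ the monad built from the polynomial encoding the generators-and-relations presentation of $\Delta$. In each case one checks finitary polynomiality (fibres of the middle map are finite) and cartesianness of unit and multiplication, which is routine because all the operations are ``tree-like'' and $\Sigma$-free. The maps ${v}:Bimod\to NOp_*$ and ${w}:WBimod\to Bimod_*$ are then exhibited as cartesian morphisms of polynomials by sending a bimodule-operation to the multiplicative-operad-operation obtained by ``collapsing'' the left/right distinction onto a single operad composition together with the canonical map $Ass\to\mathcal{O}$ (and similarly one level up); the commutative diagram of sets witnessing cartesianness has its middle square a pullback essentially by construction. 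That $v^*$ and $w^*$ agree with the forgetful functors is then a matching-up of the explicit description of restriction along a cartesian map (given in the excerpt before Proposition on $\int f$) with the forgetful functors of Section 9.

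For part (2), ``the conditions of Theorem \ref{pointedalgebras}'' are: (a) the monad of pointed algebras is polynomial, and (b) the pushout square (\ref{double}) is homotopically cofinal. For (a): $NOp_*$ has as algebras the multiplicative non-symmetric operads, i.e.\ $Ass/\mathrm{NOp}$; since $Ass$ is the terminal non-symmetric operad, ``pointing'' means adjoining a chosen map $Ass\to\mathcal{O}$, and I would check directly that this is again governed by a finitary polynomial — the operations now being planar trees decorated by the data of the map from $Ass$ — or, more slickly, invoke the Batanin--Berger notion of a \emph{tame} polynomial monad (cited in the Remark after Theorem \ref{pointedalgebras}) and verify that $NOp$ and $Bimod$ are tame, which guarantees that the monads of pointed algebras are polynomial. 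I would spell out the tameness condition for these two monads; this amounts to a finiteness/connectedness statement about the relevant categories of ``operations with a distinguished unary-type input'', which holds here because planar trees and two-level planar trees have finitely many ways of inserting a nullary/unary decoration.

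The real content, and the step I expect to be the main obstacle, is (b): showing that the square (\ref{double}),
\[
\xymatrix{
T\ar[rr]^{u} \ar[dd]_{u} && T_*\ar[dd] \\
\\
T_* \ar[rr] && T_{**}
}
\]
is homotopically cofinal for $T=NOp$ and $T=Bimod$, i.e.\ satisfies the equivalent conditions of Theorem \ref{recognition}. By that theorem it suffices to show $N((T_*)^{\oint F})$ is contractible, where $F:\Lambda\to\mathbf{PMon}$ is the span $T_*\leftarrow T\rightarrow T_*$; equivalently, by the description in Remark \ref{desc}, the nerve of the classifier whose objects are operations of $T_*$ with sources labelled $0,1,2$ — with label $0$ meaning ``plain'', labels $1,2$ meaning ``carries one of the two adjoined points'' — is contractible. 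I would prove contractibility by exhibiting an explicit deformation retraction onto the terminal object: since $T_*$-algebras have a distinguished point and in the pointed category the terminal algebra is also initial, every operation can be connected, through the generating morphisms of Remark \ref{desc} (multiplying in operations with uniformly labelled sources, and replacing a $0$-labelled source by a $1$- or $2$-labelled one via the chosen point), to the unit operation; one organizes these moves into a natural transformation from the identity functor on the classifier to the constant functor at the unit, giving a contracting homotopy after taking nerves. The delicate part is checking that the relations in Remark \ref{desc} (forced by $u$ being a monad map) do not obstruct this homotopy — i.e.\ that the contraction is well-defined and natural — and here I would use that the relevant classifiers $NOp^{NOp}$ and $Bimod^{Bimod}$ already have terminal objects (so their nerves are contractible) and that ``adding points'' only attaches cones, so contractibility is preserved. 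In fact this $T=NOp$, $T=Bimod$ cofinality is a special case of the general cofinality result Theorem \ref{bullet} announced in the introduction (the case comparing bimodules-with-a-point against doubly-multiplicative operads, and one level up), so ultimately I would deduce (b) from Theorem \ref{bullet} rather than redo the homotopy by hand; the self-contained argument above is the fallback. Finally, that $u^*$ and $b^*$ are induced by $u:NOp\to NOp_*$ and $b:Bimod\to Bimod_*$ is immediate from the constructions in part (1) and the definition of the pointed monads.
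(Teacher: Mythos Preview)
Your proposal conflates two distinct cofinality statements and misreads what the paper actually proves in this proposition.

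First, on part (1): the paper's proof is purely descriptive. It writes down the polynomials for $Bimod$ and $WBimod$ explicitly following Turchin, using planar trees with black and white vertices subject to specific combinatorial restrictions (e.g.\ for $Bimod$: no edge connects two black vertices, and every root-to-leaf path meets exactly one white vertex). Your ``two-level structure'' and ``cosimplicial'' sketches gesture at the right phenomena but are not the description the paper gives; in particular your $WBimod$ description via ``the monad whose algebras are functors $\Delta\to\Ee$'' would make $WBimod$ a category rather than a genuine polynomial monad with non-unary operations, and that is not how the paper presents it.

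Second, and more importantly, on part (2): the paper's proof of \emph{this} proposition does not verify the cofinality of the square (\ref{double}) at all. It only establishes that $NOp_*$ and $Bimod_*$ are polynomial (by describing them) and that the maps $u,b$ exist. The cofinality of the pushout squares
\[
\xymatrix{NOp \ar[r] \ar[d] & NOp_* \ar[d] \\ NOp_* \ar[r] & NOp_{**}}
\qquad\text{and}\qquad
\xymatrix{Id \ar[r] \ar[d] & Bimod \ar[d] \\ Id_+ \ar[r] & Bimod_+}
\]
is deferred to Section 12, where it is proved by exhibiting explicit chains of adjunctions between the relevant classifier categories and contractible subcategories (e.g.\ passing through $alt(NOp^{\oint NOp})$). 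Your claim that this cofinality ``is a special case of Theorem \ref{bullet}'' is wrong: Theorem \ref{bullet} asserts that the \emph{single map} $Bimod_{\bullet+\bullet}\to NOp_{\bullet\to\bullet\leftarrow\bullet}$ is homotopically cofinal, which is a statement about a different diagram of monads (triples of operads and bimodules over varying operads, not the double-pointing square for a fixed $T$). Theorem \ref{bullet} feeds into Theorem \ref{cofinalmaps} (cofinality of $Bimod_+\to NOp_{**}$), but that too is a map, not a square, and neither result implies the cofinality of (\ref{double}). Your fallback ``contracting homotopy to the unit'' sketch is too vague to succeed as stated; the actual argument requires the specific adjunction chain of Section 12.
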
 
\begin{proof} For the description of the monad $NOp$ see Example \ref{example3}. 

The monads $Bimod$ and $WBimod$ indeed have been described by Turchin in \cite{T} without explicitly saying it. For this reason, and because in Section 10 we will describe a closely related construction, we give only a brief description now. For $Bimod$, the corresponding polynomial is given by 
   \[
			\xymatrix{
				\mathbb{N} && Btr^{*} \ar[ll]_-s \ar[rr]^-{p} && Btr \ar[rr]^-t && \mathbb{N} \ .
			}
			\] 			
Here $Btr$ is the set of isomorphism classes of certain planar trees with black and white vertices (called {\it beads} in \cite{T}[p.14]). The restrictions on the class of trees are \cite{T}: 	
\begin{enumerate} \item An edge cannot connect two black vertices; \item If a tree has a white vertex then the path between any leaf or stump (valency one vertex) and the root should pass through one and only one white vertex;
 \item There is exactly one tree without white vertices. This special tree has one black vertex and no leaves (this tree is here to accommodate a special operation described in the Remark \ref{0composition}.  
\end{enumerate} 
As Turchin notices, the {second} condition means that one can draw any such tree T in a way that all white vertices lie on the same horizontal line.	
\begin{remark} We use a more general type of planar tree here in comparison with \cite{T}[Part 1] because we allow vertices of valencies $1$ and $2.$ This kind of tree is considered in Part 2 of \cite{T}. 
\end{remark}

The rest of the data for the polynomial monad $Bimod$ is very similar to $NOp$. 
The set $Btr^*$ is the set $Btr$ with one of the white vertices marked and the middle map, source and target having the same description as for $NOp.$ 
The multiplication is induced by insertion of a tree to a marked vertex and contraction of all edges connecting black vertices.  

The monad $WBimod$ is given by the polynomial
\[
			\xymatrix{
				\mathbb{N} && WBtr^{*} \ar[ll]_-s \ar[rr]^-{p} && WBtr \ar[rr]^-t && \mathbb{N}
			}
			\] 
where the class of planar trees $WBtr$ consists of planar trees with black and exactly one white vertex (see \cite{T}[p.11]) and the rest of the structure is analogous to the previous.

 The polynomial monad $NOp_*$ has been described many times in the literature in terms of a $\Sigma$-free $\mathbb{N}$-colored operad of planar trees with white and black vertices (see \cite{BataninBergerLattice} for a description in terms of trees and lattice paths of complexity $2$ as well as for earlier references).   
The monad $Bimod_*$ can be given a similar description using the corresponding class of trees. We skip its description here because we will describe a very similar monad in Section \ref{cofinalresults}. 

{ The maps of polynomial monads ${{ v} }$, ${w}$, $u$ and ${ b} $ are easy to guess from the restrictions functors ${ v}^*$, ${w}^*$, $u^*$ and ${b}^*$.}

\end{proof}

\section{First cofinality theorem}\label{cofinalresults}

Many constructions of this and subsequent sections are instances of twisted Boardman-Vogt tensor product from Section \ref{BVP}. We use some visible special notations for corresponding polynomial monads for a convenience of the reader.

Consider the following polynomial monads: \begin{enumerate}
\item The monad $NOp_{\bullet \to \bullet \leftarrow \bullet}$ whose algebras are cospans
$$\mathcal{A}\to \mathcal{C} \leftarrow \mathcal{B}$$
of non-symmetric operads.
\item The polynomial monad $Bimod_{\bullet +\bullet}$ whose algebras are triples $(\mathcal{A},X, \mathcal{B})$ where $\mathcal{A},\mathcal{B}$ are non-symmetric operads and $X$ is a $1$-pointed $\mathcal{A}-\mathcal{B}$ bimodule.  
\end{enumerate}
 There is map of polynomial monads 
 \begin{equation}\label{map1} f: Bimod_{\bullet + \bullet} \to NOp_{\bullet\to \bullet \leftarrow \bullet}\end{equation}
  such that the restriction functor along it forgets operadic structure on $\mathcal{C}$ and remembers only $\mathcal{A}\text{-}\mathcal{B}$-bimodule structures induced by two operadic maps as well as the base point in $\mathcal{C}$ given by the unit of the operad $\mathcal{C}.$  



Analogously, let $Bimod_{\bullet \to \bullet \leftarrow \bullet}$ be the polynomial monad whose algebras are cospans
		$$\mathcal{A} \to \mathcal{C} \leftarrow \mathcal{B}$$
		of $Ass-Ass$-bimodules.
	
	Also let $Wbimod_{\bullet+\bullet}$ be the polynomial monad whose algebras are triples $(\mathcal{A},X,\mathcal{B})$ where $\mathcal{A},\mathcal{B}$ are $Ass\text{-}Ass$-bimodules and $X$ is a 
		$0$-pointed $\mathcal{A}\text{-}\mathcal{B}$-bimodule. This last object is given by a 
		family of sets ${X}_n, n\ge 0$ together with 
		\begin{enumerate}
			\item a point $\star \in X_0$
			\item left $\mathcal{A}$-action
			\[
			\mathcal{A}_{p} \times  X_{q}  \to X_{p+q}
			\]
			\item right $\mathcal{B}$-action
			\[
		X_{p} \times \mathcal{B}_{q}  \to X_{p+q}
			\]			
			\item { extension of $X_n = X([n])$ to a functor} on the subcategory  $\Delta_{surj} $ of $\Delta$ with order-preserving surjections as morphisms.
		\end{enumerate}
%
%
Both actions are required to be associative and unital with respect to the pairing defined in Remark \ref{assbimod} as well as compatible with each other in the usual  sense. They  also have to be natural with respect to the morphisms in $\Delta_{surj}.$

\begin{remark}
If in the definition of algebra of $Wbimod_{\bullet+\bullet}$ the bimodules  $\mathcal{A}=\mathcal{B}= {\color{black} 1}$, {\color{black} are both the terminal $Ass\text{-}Ass$-bimodule} then $X$ is a $0$-pointed weak $Ass\text{-}Ass$-bimodule.
	\end{remark}

There is a map of polynomial monads 
	\begin{equation}\label{map1} g: Wbimod_{\bullet+\bullet} \to Bimod_{\bullet \to \bullet \leftarrow \bullet}\end{equation}
	such that the restriction functor along it forgets the bimodule structure on $\mathcal{C}.$

\begin{theorem}\label{bullet} The maps 
 $$f: Bimod_{\bullet + \bullet} \to NOp_{\bullet\to \bullet \leftarrow \bullet}$$
 and 
 $$f: Wbimod_{\bullet+\bullet} \to Bimod_{\bullet \to \bullet \leftarrow \bullet}$$ 
are homotopically cofinal.
\end{theorem}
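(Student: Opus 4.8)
The plan is to reduce both statements to the contractibility criterion of Corollary~\ref{QuillenA}: it suffices to show that the nerves $N(NOp_{\bullet\to\bullet\leftarrow\bullet}^{\,Bimod_{\bullet+\bullet}})$ and $N(Bimod_{\bullet\to\bullet\leftarrow\bullet}^{\,Wbimod_{\bullet+\bullet}})$ are contractible. Both source and target monads here are instances of the twisted Boardman--Vogt construction $\oint F$ over the category $\Lambda=(2\leftarrow 0\rightarrow 1)$: the target cospan monads are $\oint$ of the diagram with $NOp$ (resp.\ $Bimod$) in the middle and in the outer legs, while the ``$+\bullet$'' monads are $\oint$ of the diagram obtained by replacing the middle operad (resp.\ bimodule) monad by the corresponding pointed-bimodule (resp.\ pointed-weak-bimodule) monad. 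So the map $f$ (resp.\ $g$) is $\oint\psi$ for a pointwise map of $\Lambda$-diagrams $\psi$ whose outer-leg components are identities and whose middle component is the map $Bimod_+\to NOp_{**}$ (resp.\ $WBimod_+\to Bimod_{**}$) of Section~11. By Theorem~\ref{cofibrantreplacement} and the Proposition preceding Theorem~\ref{thomason}, the nerve of the classifier of $\oint\psi$ over any receiving monad is computed by the local classifier algebra $G^F$, which is assembled componentwise; concretely, the classifier $NOp_{\bullet\to\bullet\leftarrow\bullet}^{\,Bimod_{\bullet+\bullet}}$ is $D^{\oint F}$ for $D=NOp_{\bullet\to\bullet\leftarrow\bullet}$, and by the description in Remark~\ref{desc} its objects are operations of $D$ with sources labelled by $\{0,1,2\}$, with the local pieces over each label governed by $NOp_{**}^{\,Bimod_+}$ in label~$0$ and by trivial classifiers (identity maps) in labels $1,2$.

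The heart of the argument is therefore the single contractibility statement: $N(NOp_{**}^{\,Bimod_+})$ is contractible, equivalently the map $Bimod_+\to NOp_{**}$ of polynomial monads is homotopically cofinal, and likewise $N(Bimod_{**}^{\,WBimod_+})$ is contractible. For this I would give an explicit combinatorial analysis of the classifier category. Using the tree description of $NOp_{**}$ (planar trees with black and two colours of white vertices — a ``complexity~2'' lattice-path description in the sense of \cite{BataninBergerLattice}) and of $Bimod_+$ (beads-style trees of Turchin with an extra marked point in degree~1), the classifier $NOp_{**}^{\,Bimod_+}$ has as objects certain decorated planar trees, and as morphisms the generators of Remark~\ref{desc}: contraction of internal edges, insertion of a vertex on an edge, and the relabelling moves exchanging a $Bimod$-structure for one of the two operadic structures. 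The strategy is to build an explicit deformation retraction of each component category onto a terminal object. I would do this by a two-step contraction: first contract in the ``operad'' directions — every decorated tree admits a canonical zig-zag (contract all edges joining black vertices, then collapse the two operadic layers) to a tree in normal form; second, use the distinguished point in degree~1 of $Bimod_+$ (the extra bead) to absorb the remaining white/bimodule vertices, exhibiting a natural transformation from the identity functor to the constant functor at the corolla. This is exactly the ``baby case'' reasoning sketched in the introduction — when $\mathcal{O}$ is a groupoid concentrated in contractible degree~1 the two operad maps and the bimodule structure determine one another — promoted to the level of classifiers.

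The main obstacle I expect is the bookkeeping in this combinatorial contraction: one must check that the candidate natural transformation is actually natural with respect to all three types of generating morphisms and all the relations among them (in particular the relations forced by $f,g$ being monad maps, cf.\ Remark~\ref{desc}), and that the zig-zags assemble coherently across the whole $\Lambda$-indexed assembly rather than merely componentwise. A cleaner alternative, which I would pursue in parallel, is to avoid exhibiting a retraction by hand and instead apply Proposition~\ref{dddd}: write the square defining $Bimod_{\bullet+\bullet}\to NOp_{\bullet\to\bullet\leftarrow\bullet}$ as a composite/pasting of simpler homotopically cofinal squares — one handling the passage from a single operad to a single pointed bimodule (the classical knot-delooping input), and the outer-leg squares being constant hence cofinal by part~(1) of Proposition~\ref{dddd} — so that parts~(2) and~(3) of that Proposition reduce the whole claim to the one genuinely nontrivial cofinality fact, whose proof is the explicit contraction above. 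Either way, once $N(NOp_{**}^{\,Bimod_+})$ and $N(Bimod_{**}^{\,WBimod_+})$ are shown contractible, Corollary~\ref{QuillenA} finishes the proof of Theorem~\ref{bullet}.
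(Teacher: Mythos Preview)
Your reduction strategy does not work as stated. You claim that $Bimod_{\bullet+\bullet}$ and $NOp_{\bullet\to\bullet\leftarrow\bullet}$ are both of the form $\oint$ over $\Lambda$, with the map $f$ equal to $\oint\psi$ for a diagram map $\psi$ whose outer legs are identities on $NOp$ and whose middle component is $Bimod_+\to NOp_{**}$. But to form such a $\Lambda$-diagram $F$ with $F(0)=Bimod_+$ and $F(1)=F(2)=NOp$ you would need polynomial monad maps between $NOp$ and $Bimod_+$, and there are none: an $Ass$-bimodule does not forget to an operad, nor conversely. More conceptually, the operations of $Bimod_{\bullet+\bullet}$ are genuinely three-sorted --- a single bimodule operation consumes elements of $\mathcal A$, of $X$, and of $\mathcal B$ simultaneously --- whereas in any $\oint F$ each operation lives in a single $F(b)$ and the other sorts enter only through transition maps. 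So the ``local classifier'' reduction via Theorem~\ref{cofibrantreplacement} is not available here, and with it your plan to pull the whole theorem back to contractibility of $NOp_{**}^{\,Bimod_+}$ collapses. (Notice also that in the paper's architecture the implication runs the other way: Theorem~\ref{cofinalmaps} about $Bimod_+\to NOp_{**}$ is deduced \emph{from} Theorem~\ref{bullet}, via the diagram~(\ref{reduction}), not the reverse.)

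Your fallback --- an explicit two-step deformation retraction of the classifier onto a corolla --- is not what the paper does either, and it would be hard to make work: the classifier $NOp_{\bullet\to\bullet\leftarrow\bullet}^{\,Bimod_{\bullet+\bullet}}$ has no terminal object to retract onto (there are two competing ``replacement'' morphisms, cf.\ Remark~\ref{replacement}), and there is no global endofunctor realising your contraction. The paper instead builds a comparison functor $F$ from the classifier to $u^*(NOp^{NOp})$ that simply forgets all vertex colours, and proves this $F$ is a weak equivalence by showing it is \emph{smooth} in Cisinski's sense with contractible fibres. The substance of the argument is a combinatorial characterisation of which coloured trees can be contracted to a given corolla (Lemma~\ref{characterisation}), followed by an inductive decomposition of each fibre as a union $\mathbb C^1\cup\mathbb C_2$ of two contractible subcategories overlapping in a single object. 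This smooth-functor technique, rather than a retraction or a $\oint$-localisation, is the missing idea.
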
 


\begin{proof}

We have to prove that the classifier
 $NOp_{\bullet\to \bullet \leftarrow \bullet}^{Bimod_{\bullet+\bullet}}$
 is contractible.
 
 First of all we need explicit descriptions of the polynomial monads $NOp_{\bullet\to \bullet \leftarrow \bullet}$ and $Bimod_{\bullet+\bullet}.$ 
 
 The monad 
  $NOp_{\bullet\to \bullet \leftarrow \bullet}$ is given by the polynomial
\begin{equation}\label{polyforspan}
			\xymatrix{
				\mathbb{N}\coprod \mathbb{N}\coprod\mathbb{N}  && Ptr_{1\mathrm{O2}}^{*} \ar[ll]_-s \ar[rr]^-{p} && Ptr_{1\mathrm{O}2} \ar[rr]^-t && \mathbb{N}\coprod \mathbb{N}\coprod\mathbb{N}
			}
			\end{equation}
 Here, $Ptr_{1\mathrm{O}2}$ is the set of isomorphism classes of planar trees whose vertices can have three colours: white or black of two types $1$ or $2.$  We also associate one of these colours to each tree (called its target colour). The only condition is that this colour is white type if the tree contains only white vertices or any mixed type vertices. If all vertices of the tree are black of the same type, the target colour can be of the same type as the vertices colour or white. 
 So, for such a tree, we necessary have two copies in the set of operations: one with the corresponding black colour as target and one with the white colour.  
 Also each copy of $\mathbb{N}$ has its own colour (again, white or black, $1$ or $2.$)  
  \begin{figure}[H]
		\caption{Typical tree from $Ptr_{1\mathrm{O}2}$}
		\[
			\begin{tikzpicture}[scale = .7]
				\draw (0,-1) -- (0,0);
				
				\draw (0,0) -- (-2,1);
				\draw (0,0) -- (2,1);
				
				\draw (-2,1) -- (-3.5,2);
				\draw (-2,1) -- (-2,2);
				\draw (-2,1) -- (-.5,2);
				
				\draw (2,1) -- (1,2);
				\draw (2,1) -- (3,2);
				
				\draw (-3.5,2) -- (-4.5,3);
				\draw (-3.5,2) -- (-3.5,3);
				\draw (-3.5,2) -- (-2.5,3);
				
				\draw (-.5,2) -- (-2,3);
				\draw (-.5,2) -- (-1,3);
				\draw (-.5,2) -- (0,3);
				\draw (-.5,2) -- (1,3);
				
				\draw (3,2) -- (2.3,3);
				\draw (3,2) -- (3.7,3);
				
				\draw[fill] (0,0) circle (3pt) node[right]{$2$};
				\draw[fill] (-2,1) circle (3pt) node[right]{$2$};
				\draw[fill=white] (2,1) circle (3pt);
				\draw[fill=white] (-3.5,2) circle (3pt);
				\draw[fill] (-.5,2) circle (3pt) node[right]{$1$};
				\draw[fill] (3,2) circle (3pt) node[right]{$1$};
			\end{tikzpicture}
		\]
	\end{figure}
As usual $Ptr^*_{1\mathrm{O}2}$ is the set $Ptr_{1\mathrm{O}2}$ with one vertex marked.  The source  map produces a natural number of the corresponding colour (white or black $1$ or $2$) depending on what kind of vertex is marked. The target map returns the number of leaves as before,  which is placed to a copy of $\mathbb{N}$  of target  colour of the tree.

The polynomial of the monad
  $Bimod_{\bullet+\bullet}$ is 
   \[
			\xymatrix{
				\mathbb{N}\coprod \mathbb{N}\coprod\mathbb{N}  && Ptr_{1\mathrm{B}2}^{*} \ar[ll]_-s \ar[rr]^-{p} && Ptr_{1\mathrm{B}2} \ar[rr]^-t && \mathbb{N}\coprod \mathbb{N}\coprod\mathbb{N}
			}
			\] 
   The set $Ptr_{1\mathrm{B}2}$ is a subset of $Ptr_{1\mathrm{O}2}$ whose elements subject to the following restrictions : 	
\begin{enumerate}
\item If a tree has all vertices black they must have the same type and the target type of such a tree is also the same;
 \item If there is a white vertex in a tree, the path between any leaf or stump and the root should pass through one and only one white vertex (second of Turchin's restrictions), the target type of the tree must be white in this case; 
\item Black vertices along a path like the one above have type $1$ before the path meets a white vertex and type $2$ after the white vertex;
\item There are exactly three copies of the tree without vertices (a free living edge) each of them having   $1$ as target but of different types: black $1,$  black $2$ or white.  
\end{enumerate} 

{ This last requirement corresponds to the constants in the theory. Each free living edge represents a nullary operation. Operations with black target  
represent units of each operad, whereas  the free livng edge with white target represents the base point in
the $1$-pointed bimodule.} 

\begin{figure}[H]
		\caption{Typical tree from $Ptr_{1\mathrm{B}2}$}
		\[
			\begin{tikzpicture}[scale = .7]
				\draw (0,-1) -- (0,0);
				
				\draw (0,0) -- (-2,1);
				\draw (0,0) -- (2,1);
				
				\draw (-2,1) -- (-3.5,2);
				\draw (-2,1) -- (-2,2);
				\draw (-2,1) -- (-.5,2);
				
				\draw (2,1) -- (1,2);
				\draw (2,1) -- (3,2);
				
				\draw (-3.5,2) -- (-4.5,3);
				\draw (-3.5,2) -- (-3.5,3);
				\draw (-3.5,2) -- (-2.5,3);
				
				\draw (-.5,2) -- (-2,3);
				\draw (-.5,2) -- (-1,3);
				\draw (-.5,2) -- (0,3);
				\draw (-.5,2) -- (1,3);
				
				\draw (3,2) -- (2.3,3);
				\draw (3,2) -- (3.7,3);
				
				\draw[fill=white] (-2,2) circle (3pt);
								\draw[fill] (0,0) circle (3pt) node[right]{$2$};
				\draw[fill] (-2,1) circle (3pt) node[right]{$2$};
				\draw[fill=white] (2,1) circle (3pt);
				\draw[fill=white] (-3.5,2) circle (3pt);
				\draw[fill=white] (-.5,2) circle (3pt) node[right]{$ $};
				\draw[fill] (3,2) circle (3pt) node[right]{$1$};
			\end{tikzpicture}
		\]
	\end{figure}

The map $f: Bimod_{\bullet + \bullet} \to NOp_{\bullet\to \bullet \leftarrow \bullet}$ is now obvious. It is the identity on colours and is the natural inclusion on other sets.

We are now ready to describe the classifier $NOp_{\bullet\to \bullet \leftarrow \bullet}^{Bimod_{\bullet+\bullet}}.$ 

By definition this is a family of categories
indexed by natural numbers of three types. If we consider restriction to any of the black colours $1$ and $2$ the corresponding classifier is isomorphic to the absolute classifier for non-symmetric operads, therefore the category $NOp_{\bullet\to \bullet \leftarrow \bullet}^{Bimod_{\bullet+\bullet}}(n_b),$ for any $n_b$ a `black' natural number, is contractible.    
Let us concentrate on the categories $NOp_{\bullet\to \bullet \leftarrow \bullet}^{Bimod_{\bullet+\bullet}}(n_w)$ indexed by white natural numbers. 
So, we fixed one of them. By general machinery of Batanin and Berger \cite{BB}:    
\begin{proposition}\label{genrel}
\begin{enumerate}
	\item the objects of this category are trees from $P_{1\mathrm{O}2}$ with exactly $n_w$ leaves. 
	\item the morphisms are generated by
	\begin{enumerate}
		\item \label{type1} contractions to a white vertex of edges where the upper vertices are black of the first type and the lower vertex is white
		\[
		\begin{tikzpicture}[scale=0.3]
		\draw (0,-.5) -- (0,0);
		\draw (0,0) -- (-1.5,1);
		\draw (0,0) -- (0,1);
		\draw (0,0) -- (1.5,1);
		
		\draw (-1.5,1) -- (-2,2);
		\draw (-1.5,1) -- (-1,2);
		
		\draw (0,1) -- (-.5,2);
		\draw (0,1) -- (.5,2);
		
		\draw (1.5,1) -- (.75,2);
		\draw (1.5,1) -- (1.5,2);
		\draw (1.5,1) -- (2.25,2);
		
		\begin{scope}[shift={(8,0)}]
		\draw (0,0) -- (0,-.5);
		\draw (0,0) -- (-2.25,1);
		\draw (0,0) -- (-1.5,1);
		\draw (0,0) -- (-.75,1);
		\draw (0,0) -- (0,1);
		\draw (0,0) -- (.75,1);
		\draw (0,0) -- (1.5,1);
		\draw (0,0) -- (2.25,1);
		\draw[fill=white] (0,0) circle (3pt);
		\end{scope}
		
		\draw[fill=white] (0,0) circle (3pt);
		\draw[fill] (-1.5,1) circle (3pt) node[left]{$1$};
		\draw[fill] (0,1) circle (3pt) node[left]{$1$};
		\draw[fill] (1.5,1) circle (3pt) node[left]{$1$};
		\draw (4,0) node{$\longrightarrow$};
		\end{tikzpicture}
		\]
		\item \label{type2} contractions to a white vertex of edges where the upper vertices are white and the lower vertex is black of the second type
		\[
		\begin{tikzpicture}[scale=0.3]
		\draw (0,-.5) -- (0,0);
		\draw (0,0) -- (-1.5,1);
		\draw (0,0) -- (0,1);
		\draw (0,0) -- (1.5,1);
		
		\draw (-1.5,1) -- (-2,2);
		\draw (-1.5,1) -- (-1,2);
		
		\draw (0,1) -- (-.5,2);
		\draw (0,1) -- (.5,2);
		
		\draw (1.5,1) -- (.75,2);
		\draw (1.5,1) -- (1.5,2);
		\draw (1.5,1) -- (2.25,2);
		
		\draw[fill] (0,0) circle (3pt) node[left]{$2$};
		\draw[fill=white] (-1.5,1) circle (3pt);
		\draw[fill=white] (0,1) circle (3pt);
		\draw[fill=white] (1.5,1) circle (3pt);
		
		\draw (4,0) node{$\longrightarrow$};
		
		\begin{scope}[shift={(8,0)}]
		\draw (0,0) -- (0,-.5);
		\draw (0,0) -- (-2.25,1);
		\draw (0,0) -- (-1.5,1);
		\draw (0,0) -- (-.75,1);
		\draw (0,0) -- (0,1);
		\draw (0,0) -- (.75,1);
		\draw (0,0) -- (1.5,1);
		\draw (0,0) -- (2.25,1);
		\draw[fill=white] (0,0) circle (3pt);
		\end{scope}
		\end{tikzpicture}
		\]
		
		\item  contractions of edges with black vertices of the same type;
		\item insertion of a vertex of valency $2$ of any of the types on an edge. For example:
		
		\[
			\begin{tikzpicture}[scale=0.3]
			\draw (0,-.75) -- (0,.75);
			\draw (0,0) -- (-1,1);
			\draw (0,0) -- (1,1);
			
			\draw (4,-1.75) -- (4,.75);
			\draw (4,0) -- (3,1);
			\draw (4,0) -- (5,1);
			
			
			\draw[fill] (0,0) circle (3pt) node[left]{$1$};
			\draw[fill=white] (4,-1) circle (3pt);
			\draw[fill] (4,0) circle (3pt) node[right]{$1$};
			\draw (2,0) node{$\longrightarrow$};
			\end{tikzpicture}
			\]

	\end{enumerate}
	\item the relations are generated by the usual bimodules relations, as well as the operadic relations. So, for example, the squares below commute :
	 
	 \[
			\begin{tikzpicture}[scale=0.3]
			\draw (0,-1.5) -- (0,-1);
			
			\draw (0,-1) -- (-.75,0);
			\draw (0,-1) -- (.75,0);
			
			\draw (-1.5,1) -- (-2,2);
			\draw (-1.5,1) -- (-1,2);
			
			\draw (.75,0) -- (1.5,1);
			
			\draw (0,1) -- (-.5,2);
			\draw (0,1) -- (.5,2);
			
			\draw (-.75,0) -- (-1.5,1);
			\draw (-.75,0) -- (0,1);
			
			\draw (1.5,1) -- (.75,2);
			\draw (1.5,1) -- (1.5,2);
			\draw (1.5,1) -- (2.25,2);
			
			\draw[fill=white] (0,-1) circle (3pt);
			\draw[fill] (-.75,0) circle (3pt) node[left]{$1$};
			\draw[fill] (.75,0) circle (3pt) node[left]{$1$};
			\draw[fill] (-1.5,1) circle (3pt) node[left]{$1$};
			\draw[fill] (0,1) circle (3pt) node[left]{$1$};
			\draw[fill] (1.5,1) circle (3pt) node[left]{$1$};
			
			\begin{scope}[shift={(8,0)}]
			\draw (0,-1.5) -- (0,-1);
			
			\draw (0,-1) -- (-1,0);
			\draw (0,-1) -- (1,0);
			
			\draw (-1,0) -- (-1.75,1);
			\draw (-1,0) -- (-1.25,1);
			\draw (-1,0) -- (-.75,1);
			\draw (-1,0) -- (-.25,1);
			
			\draw (1,0) -- (.25,1);
			\draw (1,0) -- (1,1);
			\draw (1,0) -- (1.75,1);
			
			%
			%
			
			\draw[fill=white] (0,-1) circle (3pt);
			\draw[fill] (-1,0) circle (3pt) node[left]{$1$};
			\draw[fill] (1,0) circle (3pt) node[left]{$1$};
			\end{scope}
			
			\begin{scope}[shift={(0,-8)}]
			\draw (0,-.5) -- (0,0);
			\draw (0,0) -- (-1.5,1);
			\draw (0,0) -- (0,1);
			\draw (0,0) -- (1.5,1);
			
			\draw (-1.5,1) -- (-2,2);
			\draw (-1.5,1) -- (-1,2);
			
			\draw (0,1) -- (-.5,2);
			\draw (0,1) -- (.5,2);
			
			\draw (1.5,1) -- (.75,2);
			\draw (1.5,1) -- (1.5,2);
			\draw (1.5,1) -- (2.25,2);
			
			\draw[fill=white] (0,0) circle (3pt);
			\draw[fill] (-1.5,1) circle (3pt) node[left]{$1$};
			\draw[fill] (0,1) circle (3pt) node[left]{$1$};
			\draw[fill] (1.5,1) circle (3pt) node[left]{$1$};
			\end{scope}
			
			\begin{scope}[shift={(8,-8)}]
			\draw (0,0) -- (0,-.5);
			\draw (0,0) -- (-2.25,1);
			\draw (0,0) -- (-1.5,1);
			\draw (0,0) -- (-.75,1);
			\draw (0,0) -- (0,1);
			\draw (0,0) -- (.75,1);
			\draw (0,0) -- (1.5,1);
			\draw (0,0) -- (2.25,1);
			\draw[fill=white] (0,0) circle (3pt);
			\end{scope}
			
			%
			
			\draw[->] (3,0) -- (5,0);
			\draw[->] (3,-8) -- (5,-8);
			\draw[->] (0,-3) -- (0,-5);
			\draw[->] (8,-3) -- (8,-5);
			\begin{scope}[shift={(16,0)}]
			\draw (0,-1.5) -- (0,-1);
			
			\draw (0,-1) -- (-.75,0);
			\draw (0,-1) -- (.75,0);
			
			\draw (-1.5,1) -- (-2,2);
			\draw (-1.5,1) -- (-1,2);
			
			\draw (.75,0) -- (1.5,1);
			
			\draw (0,1) -- (-.5,2);
			\draw (0,1) -- (.5,2);
			
			\draw (-.75,0) -- (-1.5,1);
			\draw (-.75,0) -- (0,1);
			
			\draw (1.5,1) -- (.75,2);
			\draw (1.5,1) -- (1.5,2);
			\draw (1.5,1) -- (2.25,2);
			
			\draw[fill] (0,-1) circle (3pt) node[left]{$2$};
			\draw[fill] (-.75,0) circle (3pt) node[left]{$2$};
			\draw[fill] (.75,0) circle (3pt) node[left]{$2$};
			\draw[fill=white] (-1.5,1) circle (3pt);
			\draw[fill=white] (0,1) circle (3pt);
			\draw[fill=white] (1.5,1) circle (3pt);
			
			\begin{scope}[shift={(8,0)}]
			\draw (0,-1.5) -- (0,-1);
			
			\draw (0,-1) -- (-1,0);
			\draw (0,-1) -- (1,0);
			
			\draw (-1,0) -- (-1.75,1);
			\draw (-1,0) -- (-1.25,1);
			\draw (-1,0) -- (-.75,1);
			\draw (-1,0) -- (-.25,1);
			
			\draw (1,0) -- (.25,1);
			\draw (1,0) -- (1,1);
			\draw (1,0) -- (1.75,1);
			
			%
			%
			
			\draw[fill] (0,-1) circle (3pt) node[left]{$2$};
			\draw[fill=white] (-1,0) circle (3pt);
			\draw[fill=white] (1,0) circle (3pt);
			\end{scope}
			
			\begin{scope}[shift={(0,-8)}]
			\draw (0,-.5) -- (0,0);
			\draw (0,0) -- (-1.5,1);
			\draw (0,0) -- (0,1);
			\draw (0,0) -- (1.5,1);
			
			\draw (-1.5,1) -- (-2,2);
			\draw (-1.5,1) -- (-1,2);
			
			\draw (0,1) -- (-.5,2);
			\draw (0,1) -- (.5,2);
			
			\draw (1.5,1) -- (.75,2);
			\draw (1.5,1) -- (1.5,2);
			\draw (1.5,1) -- (2.25,2);
			
			\draw[fill] (0,0) circle (3pt) node[left]{$2$};
			\draw[fill=white] (-1.5,1) circle (3pt);
			\draw[fill=white] (0,1) circle (3pt);
			\draw[fill=white] (1.5,1) circle (3pt);
			\end{scope}
			
			\begin{scope}[shift={(8,-8)}]
			\draw (0,0) -- (0,-.5);
			\draw (0,0) -- (-2.25,1);
			\draw (0,0) -- (-1.5,1);
			\draw (0,0) -- (-.75,1);
			\draw (0,0) -- (0,1);
			\draw (0,0) -- (.75,1);
			\draw (0,0) -- (1.5,1);
			\draw (0,0) -- (2.25,1);
			\draw[fill=white] (0,0) circle (3pt);
			\end{scope}
			
			%
			
			\draw[->] (3,0) -- (5,0);
			\draw[->] (3,-8) -- (5,-8);
			\draw[->] (0,-3) -- (0,-5);
			\draw[->] (8,-3) -- (8,-5);
			\end{scope}
			\end{tikzpicture}
			\]

			\[
			\begin{tikzpicture}[scale=0.3]
			\draw (0,-1.5) -- (0,-1);
			
			\draw (0,-1) -- (-.75,0);
			\draw (0,-1) -- (.75,0);
			
			\draw (-1.5,1) -- (-2,2);
			\draw (-1.5,1) -- (-1,2);
			
			\draw (.75,0) -- (1.5,1);
			
			\draw (0,1) -- (-.5,2);
			\draw (0,1) -- (.5,2);
			
			\draw (-.75,0) -- (-1.5,1);
			\draw (-.75,0) -- (0,1);
			
			\draw (1.5,1) -- (.75,2);
			\draw (1.5,1) -- (1.5,2);
			\draw (1.5,1) -- (2.25,2);
			
			\draw[fill] (0,-1) circle (3pt) node[left]{$2$};
			\draw[fill=white] (-.75,0) circle (3pt);
			\draw[fill=white] (.75,0) circle (3pt);
			\draw[fill] (-1.5,1) circle (3pt) node[left]{$1$};
			\draw[fill] (0,1) circle (3pt) node[left]{$1$};
			\draw[fill] (1.5,1) circle (3pt) node[left]{$1$};
			
			\begin{scope}[shift={(8,0)}]
			\draw (0,-1.5) -- (0,-1);
			
			\draw (0,-1) -- (-1,0);
			\draw (0,-1) -- (1,0);
			
			\draw (-1,0) -- (-1.75,1);
			\draw (-1,0) -- (-1.25,1);
			\draw (-1,0) -- (-.75,1);
			\draw (-1,0) -- (-.25,1);
			
			\draw (1,0) -- (.25,1);
			\draw (1,0) -- (1,1);
			\draw (1,0) -- (1.75,1);
			
			%
			%
			
			\draw[fill] (0,-1) circle (3pt) node[left]{$2$};
			\draw[fill=white] (-1,0) circle (3pt);
			\draw[fill=white] (1,0) circle (3pt);
			\end{scope}
			
			\begin{scope}[shift={(0,-8)}]
			\draw (0,-.5) -- (0,0);
			\draw (0,0) -- (-1.5,1);
			\draw (0,0) -- (0,1);
			\draw (0,0) -- (1.5,1);
			
			\draw (-1.5,1) -- (-2,2);
			\draw (-1.5,1) -- (-1,2);
			
			\draw (0,1) -- (-.5,2);
			\draw (0,1) -- (.5,2);
			
			\draw (1.5,1) -- (.75,2);
			\draw (1.5,1) -- (1.5,2);
			\draw (1.5,1) -- (2.25,2);
			
			\draw[fill=white] (0,0) circle (3pt);
			\draw[fill] (-1.5,1) circle (3pt) node[left]{$1$};
			\draw[fill] (0,1) circle (3pt) node[left]{$1$};
			\draw[fill] (1.5,1) circle (3pt) node[left]{$1$};
			\end{scope}
			
			\begin{scope}[shift={(8,-8)}]
			\draw (0,0) -- (0,-.5);
			\draw (0,0) -- (-2.25,1);
			\draw (0,0) -- (-1.5,1);
			\draw (0,0) -- (-.75,1);
			\draw (0,0) -- (0,1);
			\draw (0,0) -- (.75,1);
			\draw (0,0) -- (1.5,1);
			\draw (0,0) -- (2.25,1);
			\draw[fill=white] (0,0) circle (3pt);
			\end{scope}
			
			%
			
			\draw[->] (3,0) -- (5,0);
			\draw[->] (3,-8) -- (5,-8);
			\draw[->] (0,-3) -- (0,-5);
			\draw[->] (8,-3) -- (8,-5);
			\end{tikzpicture}
			\]
	\end{enumerate}
	\end{proposition}
	
\begin{remark} \label{replacement} The role of the morphism of insertion of a white vertex deserves a separate comment.  Combining this morphism with the morphism of bimodule contractions, we obtain two important families of morphisms in the classifier $NOp_{\bullet\to \bullet \leftarrow \bullet}^{Bimod_{\bullet+\bullet}}:$ 
\begin{enumerate}\item Morphisms which replace a black vertex of type $1$ by a white vertex:
\[
			\begin{tikzpicture}[scale=0.3]
			\draw (0,-.75) -- (0,.75);
			\draw (0,0) -- (-1,1);
			\draw (0,0) -- (1,1);
			
			\draw (4,-1.75) -- (4,.75);
			\draw (4,0) -- (3,1);
			\draw (4,0) -- (5,1);
			
			\draw (8,-.75) -- (8,.75);
			\draw (8,0) -- (7,1);
			\draw (8,0) -- (9,1);
			
			\draw[fill] (0,0) circle (3pt) node[left]{$1$};
			\draw[fill=white] (4,-1) circle (3pt);
			\draw[fill] (4,0) circle (3pt) node[right]{$1$};
			\draw (2,0) node{$\longrightarrow$};
			\draw (6,0) node{$\longrightarrow$};
			\draw[fill=white] (8,0) circle (3pt);
			\end{tikzpicture}
			\]

\item Morphisms which replace a black vertex of type $2$ by a white vertex
\[
			\begin{tikzpicture}[scale=0.3]
			\draw (0,-.75) -- (0,.75);
			\draw (0,0) -- (-1,1);
			\draw (0,0) -- (1,1);
			
			\draw (4,-1.75) -- (4,1);
			\draw (4,-1) -- (3,1);
			\draw (4,-1) -- (5,1);
			
			\draw (8,-.75) -- (8,.75);
			\draw (8,0) -- (7,1);
			\draw (8,0) -- (9,1);
			
			\draw[fill] (0,0) circle (3pt) node[left]{$2$};
			\draw[fill=white] (4,0) circle (3pt);
			\draw[fill=white] (3.5,0) circle (3pt);	
			\draw[fill=white] (4.5,0) circle (3pt);			
			\draw[fill] (4,-1) circle (3pt) node[right]{$2$};
			\draw (2,0) node{$\longrightarrow$};
			\draw (6,0) node{$\longrightarrow$};
			\draw[fill=white] (8,0) circle (3pt);
			\end{tikzpicture}
			\]

\end{enumerate}

There is one exception to the rule of replacement of a black vertex of type $2$, 
namely, if such a vertex has valency $1.$
In this case, however, we have a special operation in the left bimodule (see Remark \ref{0composition}) which provides a morphism
\[
			\begin{tikzpicture}[scale=0.3]
			\draw (0,-.75) -- (0,0);
			
			\draw (4,-.75) -- (4,0);

			\draw[fill] (0,0) circle (3pt) node[left]{$2$};
			\draw[fill=white] (4,0) circle (3pt);
			\draw (2,0) node{$\longrightarrow$};
			\end{tikzpicture}
			\]

These morphisms make the classifier $NOp_{\bullet\to \bullet \leftarrow \bullet}^{Bimod_{\bullet+\bullet}}$ look very similar to the absolute classifier
$NOp_{\bullet\to \bullet \leftarrow \bullet}^{NOp_{\bullet\to \bullet \leftarrow \bullet}}.$ Indeed, as categories they have the same objects  and in both categories there are morphisms of  replacing black vertices by white vertices. In the absolute classifier $NOp_{\bullet\to \bullet \leftarrow \bullet}^{NOp_{\bullet\to \bullet \leftarrow \bullet}}$ these morphisms correspond to two internal operad morphisms, which is a part of the $NOp_{\bullet\to \bullet \leftarrow \bullet}$-algebra structure. 
The difference between these categories is that in $NOp_{\bullet\to \bullet \leftarrow \bullet}^{Bimod_{\bullet+\bullet}}$ there are no contractions of edges connecting white vertices, which reflects the fact that we just have an internal bimodule, not an operad.   \end{remark}
	
\begin{remark} \label{RNOP}One can develop a very similar theory in the case of reduced operads and bimodules (this is the case of Turchin's paper \cite{T}[Part 1]). Reduced means that operads and bimodules we consider are such that $\mathcal{X}(0) = \emptyset$ and  $\mathcal{X}(1) = 1.$ All $\mathcal{A}\text{-}\mathcal{B}$-bimodules are, therefore, canonically $1$-pointed. The corresponding classifiers do not contains vertices of valencies $1$ and $2$ and, hence, are finite. Indeed they are finite posets.  The following is a picture of the category
$RNOp_{\bullet\to \bullet \leftarrow \bullet}^{RBimod_{\bullet+\bullet}}(3_w):$ 

			\def\lefttree#1#2#3{
				\begin{scope}[shift={#1}]
					\begin{scope}[shift={(.15,0)}]
						\draw (0,0) -- (-.5,.5);
						\draw (0,0) -- (.25,.25);
						\draw (-.25,.25) -- (0,.5);
						\draw (0,0) -- (0,-.2);
						\ifthenelse
						{
							#2 = 1
						}
						{
							\draw[fill] (0,0) circle (2pt) node[right]{$1$};
						}
						{
							\ifthenelse
							{
								#2 = 2
							}
							{
								\draw[fill] (0,0) circle (2pt) node[right]{$2$};
							}
							{
								\draw[fill=white] (0,0) circle (2pt);
							}
						}
						\ifthenelse
						{
							#3 = 1
						}
						{
							\draw[fill] (-.25,.25) circle (2pt) node[left]{$1$};
						}
						{
							\ifthenelse
							{
								#3 = 2
							}
							{
								\draw[fill] (-.25,.25) circle (2pt) node[left]{$2$};
							}
							{
								\draw[fill=white] (-.25,.25) circle (2pt);
							}
						}
					\end{scope}
				\end{scope}
			}
			
			\def\righttree#1#2#3{
				\begin{scope}[shift={#1}]
					\begin{scope}[shift={(-.15,0)}]
						\draw (0,0) -- (.5,.5);
						\draw (0,0) -- (-.25,.25);
						\draw (.25,.25) -- (0,.5);
						\draw (0,0) -- (0,-.2);
						\ifthenelse
						{
							#2 = 1
						}
						{
							\draw[fill] (0,0) circle (2pt) node[left]{$1$};
						}
						{
							\ifthenelse
							{
								#2 = 2
							}
							{
								\draw[fill] (0,0) circle (2pt) node[left]{$2$};
							}
							{
								\draw[fill=white] (0,0) circle (2pt);
							}
						}
						\ifthenelse
						{
							#3 = 1
						}
						{
							\draw[fill] (.25,.25) circle (2pt) node[right]{$1$};
						}
						{
							\ifthenelse
							{
								#3 = 2
							}
							{
								\draw[fill] (.25,.25) circle (2pt) node[right]{$2$};
							}
							{
								\draw[fill=white] (.25,.25) circle (2pt);
							}
						}
					\end{scope}
				\end{scope}
			}
			
			\def\middletree#1#2{
				\begin{scope}[shift={#1}]
					\draw (0,0) -- (-.375,.375);
					\draw (0,0) -- (0,.375);
					\draw (0,0) -- (.375,.375);
					\draw (0,0) -- (0,-.2);
					\ifthenelse
					{
						#2 = 1
					}
					{
						\draw[fill] (0,0) circle (2pt) node[right]{$1$};
					}
					{
						\ifthenelse
						{
							#2 = 2
						}
						{
							\draw[fill] (0,0) circle (2pt) node[right]{$2$};
						}
						{
							\draw[fill=white] (0,0) circle (2pt);
						}
					}
				\end{scope}
			}
			
			\def\line#1#2#3#4{
				\draw[->] ({#1 + 1.3*(#3-#1)/sqrt(((#3-#1)*(#3-#1))+((#4-#2)*(#4-#2)))/2},{#2 + 1.3*(#4-#2)/sqrt(((#3-#1)*(#3-#1))+((#4-#2)*(#4-#2)))/2}) -- ({#3 - 1.3*(#3-#1)/sqrt(((#3-#1)*(#3-#1))+((#4-#2)*(#4-#2)))/2},{#4 - 1.3*(#4-#2)/sqrt(((#3-#1)*(#3-#1))+((#4-#2)*(#4-#2)))/2});
			}

			\begin{figure}[H]\label{fullcategory}
				\[
				\begin{tikzpicture}[scale = 0.7]
				\begin{scope}[shift={(0,.1)}]
				\line{-6}{-3}{0}{0};
				\line{-3}{-3}{0}{0};
				\line{0}{-3}{0}{0};
				\line{3}{-3}{0}{0};
				\line{6}{-3}{0}{0};
				
				\line{-6}{-3}{-3}{-3};
				\line{0}{-3}{-3}{-3};
				\line{0}{-3}{3}{-3};
				\line{6}{-3}{3}{-3};
				
				\line{-6}{0}{0}{0};
				\line{6}{0}{0}{0};
				
				\line{-6}{3}{0}{0};
				\line{-3}{3}{0}{0};
				\line{0}{3}{0}{0};
				\line{3}{3}{0}{0};
				\line{6}{3}{0}{0};
				
				\line{-6}{3}{-3}{3};
				\line{0}{3}{-3}{3};
				\line{0}{3}{3}{3};
				\line{6}{3}{3}{3};
				
				\line{-6}{-3}{0}{-6};
				\line{-3}{-3}{0}{-6};
				\line{0}{-3}{0}{-6};
				\line{3}{-3}{0}{-6};
				\line{6}{-3}{0}{-6};
				\line{-3}{-6}{0}{-6};
				\line{0}{-9}{0}{-6};
				\line{3}{-6}{0}{-6};
				
				\line{-6}{3}{0}{6};
				\line{-3}{3}{0}{6};
				\line{0}{3}{0}{6};
				\line{3}{3}{0}{6};
				\line{6}{3}{0}{6};
				\line{-3}{6}{0}{6};
				\line{0}{9}{0}{6};
				\line{3}{6}{0}{6};
				
				\line{0}{-9}{-3}{-6};
				\line{0}{-9}{3}{-6};
				\line{-6}{-3}{-3}{-6};
				\line{6}{-3}{3}{-6};
				\line{-6}{-3}{-6}{0};
				\line{6}{-3}{6}{0};
				
				\line{0}{9}{-3}{6};
				\line{0}{9}{3}{6};
				\line{-6}{3}{-3}{6};
				\line{6}{3}{3}{6};
				\line{-6}{3}{-6}{0};
				\line{6}{3}{6}{0};
				\end{scope}
				
				\lefttree{(0,-9)}{1}{2};
				\lefttree{(-3,-6)}{1}{3};
				\lefttree{(0,-6)}{3}{3};
				\lefttree{(3,-6)}{3}{2};
				\lefttree{(-6,-3)}{1}{1};
				\lefttree{(-3,-3)}{3}{1};
				\lefttree{(0,-3)}{2}{1};
				\lefttree{(3,-3)}{2}{3};
				\lefttree{(6,-3)}{2}{2};
				\middletree{(-6,0)}{1};
				\middletree{(0,0)}{3};
				\middletree{(6,0)}{2};
				\righttree{(0,9)}{1}{2};
				\righttree{(-3,6)}{1}{3};
				\righttree{(0,6)}{3}{3};
				\righttree{(3,6)}{3}{2};
				\righttree{(-6,3)}{1}{1};
				\righttree{(-3,3)}{3}{1};
				\righttree{(0,3)}{2}{1};
				\righttree{(3,3)}{2}{3};
				\righttree{(6,3)}{2}{2};
				\end{tikzpicture}
				\]
			\end{figure}
	
	The nerve of this poset is clearly contractible. It is also visible what makes it work. The operadic contractions in this picture are replaced by bimodule contractions.  If we are able to invert those morphisms, we get an internal operad structure on corollas with white vertices together with two operadic maps from two internal operads formed by black corollas. This is the conceptual main point of our theorem.  
\end{remark}
			
	We continue with a formal proof of contractibility of the classifier $NOp_{\bullet\to \bullet \leftarrow \bullet}^{Bimod_{\bullet+\bullet}}.$ 
	
		There is a map of polynomial monads 
	\begin{equation}\label{uuu} u: NOp_{\bullet\to \bullet \leftarrow \bullet} \to NOp\end{equation} 
	constructed as follows.
	On colours  $$\mathbb{N}\coprod\mathbb{N}\coprod\mathbb{N} \to \mathbb{N}$$ 	
	it is an identity on each summand. On operations it forgets about all colours (vertices colours as well as target colours) of trees from $Ptr_{1O2}.$ In other words it only remembers the shape of the tree. This is a cartesian map and the restriction functor along this map applied to an operad $\mathcal{O}$ returns a cospan     
	$\mathcal{O}\stackrel{id}{\to} \mathcal{O} \stackrel{id}{\leftarrow} \mathcal{O}.$ 
	We then have the following commutative square of cartesian maps of polynomial monads:
	\begin{equation}
			\xymatrix{
				Bimod_{\bullet+\bullet}\ar[rr]^{f\circ u} \ar[dd]_{f} && NOp\ar[dd]^{id} \\
				\\
				NOp_{\bullet\to \bullet \leftarrow \bullet} \ar[rr]^{u} && NOp
			}
			\end{equation}	
By Proposition \ref{square}, we then have a map of classifiers
\begin{equation}\label{NOPBIMOD}F: NOp_{\bullet\to \bullet \leftarrow \bullet}^{Bimod_{\bullet+\bullet}}\to u^*(NOp^{NOp}) \ .\end{equation}	
We are going to prove that the nerve of this map is a weak equivalence which, of course, will imply the contractibility of $NOp_{\bullet\to \bullet \leftarrow \bullet}^{Bimod_{\bullet+\bullet}}.$ 
The map (\ref{NOPBIMOD}) on objects has the same effect as (\ref{uuu}) on operations. It also maps contractions to contractions while any morphism which comes from replacement of black vertices by white vertex is mapped to the identity.

We fix a particular $n\in\mathbb{N}$ and consider the restriction  of the map of the classifiers (\ref{NOPBIMOD}) to the component  indexed by $n_w$ (it is clear that on  components with black colours the map (\ref{NOPBIMOD}) is an isomorphism).   To make our notation less heavy we give names to two of our categories as follows:
$$NOp_{\bullet\to \bullet \leftarrow \bullet}^{Bimod_{\bullet+\bullet}}(n_w) =\mathbb{C}$$	
and 
$$u^*(NOp^{NOp})(f(n_w)) = \mathbb{D}.$$
The restriction of (\ref{NOPBIMOD})	on this components will simply be denoted by $F.$  	
	
	For any $d \in \mathbb{D}$, we write $F_d$ for the (strict) fiber of $F$ over $d$; that is, the full subcategory of objects $c \in \mathbb{C}$ such that $F(c) = d$.

	\begin{defin}[\cite{cis06}]
		A functor $F : \mathbb{C} \to \mathbb{D}$ is \emph{smooth} if, for all $d \in \mathbb{D}$, the canonical functor
		\[
		F_d \to d / F
		\]
		induces a weak equivalence of nerves.
		
		Dually a functor $F : \mathbb{C} \to \mathbb{D}$ is \emph{proper} if, for all $d \in \mathbb{D}$, the canonical functor
		\[
		F_d \to F/ d
		\]
		induces a weak equivalence of nerves.		
	\end{defin}
	
	We have the following lemma \cite[Proposition 5.3.4]{cis06} :
	
	\begin{lem}\label{Cisinski}
		A functor $F: \mathbb{C} \to \mathbb{D}$ is smooth if and only if for all maps $f_1 : d_0 \to d_1$ in $\mathbb{D}$ and all objects $c_1$ in $F_{d_1}$, the nerve of the `lifting' category $\mathbb{C}(c_1,f_1)$ of $f_1$ over $c_1$, whose objects are arrows $f: c \to c_1$ such that $F(f) = f_1$ and arrows are commutative triangles :
		\[
		\xymatrix{
			c \ar[rr]^g \ar[rd]_f && c' \ar[ld]^{f'} \\
			& c_1
		}
		\]
		with $g$ a morphism in $F_{d_0}$, is contractible.
		
		{ There is a} dual characterisation for proper functors.
	\end{lem}

	We call any tree {in $\mathbb{C}$} which has only one vertex a {\em corolla}. We call the unique vertex connected to the {\em root vertex}. The {\em root path} from a vertex is the path from this vertex to the root vertex. Using a description of the category $\mathbb{C}$ from Proposition \ref{genrel}, we have the following lemma:	
	\begin{lem}\label{characterisation}[Characterisation of trees that can be contracted to a corolla]
		Let $c \in \mathbb{C}$ be a corolla. There is a morphism from a tree $a$ to $c$ in $\mathbb{C}$  if and only if the tree $a$ satisfies the following property:
\begin{itemize}\item[$*$] the root path from any vertex consists of a sequence of black vertices of type $1$ { followed by} at most one white vertex { followed by} a sequence of black vertices of type $2$. \end{itemize}			
	\end{lem}
\begin{figure}[H]
		\caption{Tree satisfying the property $*$}
		\[
			\begin{tikzpicture}[scale = .7]
				\draw (0,-1) -- (0,0);
				
				\draw (0,0) -- (-2,1);
				\draw (0,0) -- (2,1);
				
				\draw (-2,1) -- (-3.5,2);
				\draw (-2,1) -- (-2,2);
				\draw (-2,1) -- (-.5,2);
				
				\draw (2,1) -- (1,2);
				\draw (2,1) -- (3,2);
				
				\draw (-3.5,2) -- (-4.5,3);
				\draw (-3.5,2) -- (-3.5,3);
				\draw (-3.5,2) -- (-2.5,3);
				
				\draw (-.5,2) -- (-2,3);
				\draw (-.5,2) -- (-1,3);
				\draw (-.5,2) -- (0,3);
				\draw (-.5,2) -- (1,3);
				
				\draw (3,2) -- (2.3,3);
				\draw (3,2) -- (3.7,3);
				
				\draw[fill] (0,0) circle (3pt) node[right]{$2$};
				\draw[fill] (-2,1) circle (3pt) node[right]{$2$};
				\draw[fill=white] (2,1) circle (3pt);
				\draw[fill=white] (-3.5,2) circle (3pt);
				\draw[fill] (-.5,2) circle (3pt) node[right]{$1$};
				\draw[fill] (3,2) circle (3pt) node[right]{$1$};
			\end{tikzpicture}
		\]
	\end{figure}

	\begin{proof}
		Assume that $a$ satisfies the property $*$. We use induction on the maximal length of a root path in the tree $a.$ If $a$ is a corolla, then the statement is trivial. If $a$ is not a corolla, one can consider $a$ to be a forest of branches joined at the root vertex.  All the branches satisfy the property $*$ and have maximal path root length strictly less than $a.$ By induction, they can therefore be contracted to a corolla. If the root vertex is black of type $2$, the remaining tree can always be contracted. If not, then all the vertices above the root vertex in $a$ are black of type $1$, and the remaining tree can also be contracted.
		
		For the other direction of the equivalence, assume that $a$ does not satisfy the property $*$. Notice that if a tree $a$ does not satisfy the property $*$ and $a \to b$ is a generating morphism, then $b$ does not satisfy the property $*$. Yet the corolla satisfies the property $*$, which is a contradiction.
	\end{proof}
	
	\begin{lem}\label{smooth}
		The functor $F : \mathbb{C} \to \mathbb{D}$ is smooth.
	\end{lem}
	
	\begin{proof}
		Let $f_1 : d_0 \to d_1$ in $\mathbb{D}$ and $c_1$ in $F_{d_1}$. According to the Lemma \ref{Cisinski}, we have to prove that the nerve of the category $\mathbb{C}(c_1,f_1)$ is contractible.
		
		{ Observe that $\mathbb{C}(c_1,f_1)$ is isomorphic to the full subcategory of $\mathbb{C}$ consisting of trees with the same shape as $d_0$ which can be contracted to $c_1$. Moreover, $\mathbb{C}(c_1,f_1)$ is also isomorphic to a product of categories $\mathbb{C}(c^v_1,f^v_1)$ where
			\begin{itemize}
				\item  $v$ runs over the vertices of $d_1$
				\item $d^v_1$ is the corolla whose set of leaves is equal to the set of incoming edges of $v$
				\item $d^v_0$ is the subtree of $d_0$ containing all the vertices that are sent to $v$ through $f_1$
				\item $f^v_1 : d^v_0 \to d^v_1$ is the contraction to the corolla
				\item $c^v_1$ is the corolla with the same color as the vertex $v$ of $c_1$
			\end{itemize}
			
			If the unique vertex of $c^v_1$ is black, then $\mathbb{C}(c^v_1,f^v_1)$ is the terminal category and its nerve is contractible. We can therefore assume that $c^v_1$ is a corolla with a white vertex.
			
			In conclusion, all we have to prove is the contractibility of the nerve of $\mathbb{C}_d$, which is defined as the subcategory of trees in $\mathbb{C}$ with the same shape as $d \in \mathbb{D}$ and that can be contracted to a corolla with a white vertex.}
		
		
		Let us introduce the following notations :
		
		\begin{itemize}
			\item We write $\mathbb{C}^1$ for the full subcategory of 
			{$\mathbb{C}_d$} consisting of trees whose all non root vertices are black of type $1$.
			\item We write $\mathbb{C}_2$ for the full subcategory of 
			{$\mathbb{C}_d$} consisting of trees whose root vertex is black of type $2$.
		\end{itemize}
		
		\def\tree#1#2#3#4{
			\begin{scope}[shift={#1}]
				\begin{scope}[shift={(0,-.15)}]
					\draw (0,0) -- (.25,.25);
					\draw (0,0) -- (-.25,.25);
					\draw (.25,.25) -- (.45,.45);
					\draw (.25,.25) -- (.05,.45);
					\draw (-.25,.25) -- (-.45,.45);
					\draw (-.25,.25) -- (-.05,.45);
					\draw (0,0) -- (0,-.2);
					\ifthenelse
					{
						#2 = 1
					}
					{
						\draw[fill] (0,0) circle (2pt) node[left]{$1$};
					}
					{
						\ifthenelse
						{
							#2 = 2
						}
						{
							\draw[fill] (0,0) circle (2pt) node[left]{$2$};
						}
						{
							\draw[fill=white] (0,0) circle (2pt);
						}
					}
					\ifthenelse
					{
						#3 = 1
					}
					{
						\draw[fill] (-.25,.25) circle (2pt) node[left]{$1$};
					}
					{
						\ifthenelse
						{
							#3 = 2
						}
						{
							\draw[fill] (-.25,.25) circle (2pt) node[left]{$2$};
						}
						{
							\draw[fill=white] (-.25,.25) circle (2pt);
						}
					}
					\ifthenelse
					{
						#4 = 1
					}
					{
						\draw[fill] (.25,.25) circle (2pt) node[right]{$1$};
					}
					{
						\ifthenelse
						{
							#4 = 2
						}
						{
							\draw[fill] (.25,.25) circle (2pt) node[right]{$2$};
						}
						{
							\draw[fill=white] (.25,.25) circle (2pt);
						}
					}
				\end{scope}
			\end{scope}
		}
		
		\def\line#1#2#3#4{
			\draw[->] ({#1 + 1.3*(#3-#1)/sqrt(((#3-#1)*(#3-#1))+((#4-#2)*(#4-#2)))/2},{#2 + 1.3*(#4-#2)/sqrt(((#3-#1)*(#3-#1))+((#4-#2)*(#4-#2)))/2}) -- ({#3 - 1.3*(#3-#1)/sqrt(((#3-#1)*(#3-#1))+((#4-#2)*(#4-#2)))/2},{#4 - 1.3*(#4-#2)/sqrt(((#3-#1)*(#3-#1))+((#4-#2)*(#4-#2)))/2});
		}
		
		\begin{figure}[H]
			\caption{The subcategories $\mathbb{C}^1$ and $\mathbb{C}_2$}
			\[
			\begin{tikzpicture}[scale=.7]
			\begin{scope}
				\draw (0,-1) arc (-90:90:1);
				\draw (-6,1) arc (90:270:1);
				\draw (-6,1) -- (0,1);
				\draw (-6,-1) -- (0,-1);
				\draw (-3,-1) node[below]{$\mathbb{C}^1$};
				
				\draw ({-sqrt(2)/2},{sqrt(2)/2}) -- ({3-sqrt(2)/2},{3+sqrt(2)/2});
				\draw ({3+sqrt(2)/2},{3+sqrt(2)/2}) -- ({6+sqrt(2)/2},{sqrt(2)/2});
				\draw ({6+sqrt(2)/2},{-sqrt(2)/2}) -- ({3+sqrt(2)/2},{-3-sqrt(2)/2});
				\draw ({3-sqrt(2)/2},{-3-sqrt(2)/2}) -- ({-sqrt(2)/2},{-sqrt(2)/2});
				
				\draw ({-sqrt(2)/2},{sqrt(2)/2}) arc (135:225:1);
				\draw ({3-sqrt(2)/2},{-3-sqrt(2)/2}) arc (-135:-45:1);
				\draw ({6+sqrt(2)/2},{-sqrt(2)/2}) arc (-45:45:1);
				\draw ({3+sqrt(2)/2},{3+sqrt(2)/2}) arc (45:135:1);
				\draw (3,-4) node[below]{$\mathbb{C}_2$};
			\end{scope}
			
			\tree{(-6,0)}{1}{1}{1};
			\tree{(-3,0)}{3}{1}{1};
			\tree{(0,0)}{2}{1}{1};
			\tree{(3,0)}{2}{3}{3};
			\tree{(6,0)}{2}{2}{2};
			\tree{(1.5,1.5)}{2}{3}{1};
			\tree{(3,3)}{2}{2}{1};
			\tree{(4.5,1.5)}{2}{2}{3};
			\tree{(1.5,-1.5)}{2}{1}{3};
			\tree{(3,-3)}{2}{1}{2};
			\tree{(4.5,-1.5)}{2}{3}{2};
			
			\line{-6}{0}{-3}{0};
			\line{0}{0}{-3}{0};
			
			\line{0}{0}{3}{0};
			\line{1.5}{-1.5}{3}{0};
			\line{3}{-3}{3}{0};
			\line{4.5}{-1.5}{3}{0};
			\line{6}{0}{3}{0};
			\line{4.5}{1.5}{3}{0};
			\line{3}{3}{3}{0};
			\line{1.5}{1.5}{3}{0};
			
			\line{0}{0}{1.5}{-1.5};
			\line{3}{-3}{1.5}{-1.5};
			\line{3}{-3}{4.5}{-1.5};
			\line{4.5}{-1.5}{6}{0};
			\line{6}{0}{4.5}{1.5};
			\line{3}{3}{4.5}{1.5};
			\line{3}{3}{1.5}{1.5};
			\line{0}{0}{1.5}{1.5};
			\end{tikzpicture}
			\]
		\end{figure}
			
			{ Using lemma \ref{characterisation}, we deduce that $\mathbb{C}_d = \mathbb{C}^1 \cup \mathbb{C}_2$.}
		
			The subcategory $\mathbb{C}^1$ contains only $3$ objects, and one of them is terminal for this subcategory, namely, the tree where the root vertex is white and all other vertices are black of type $1$.
			
			The subcategory $\mathbb{C}_2$ is isomorphic to the product of the subcategories $\mathbb{C}_{d_1},\ldots,\mathbb{C}_{d_k}$, where $d_1,\ldots,d_k$ are the subtrees coming up from the root vertex of $d$. Indeed, an object $c \in \mathbb{C}_2$ can be associated to a sequence of objects $c_1,\ldots,c_k \in \mathbb{C}_{d_1},\ldots,\mathbb{C}_{d_k}$ by taking the subtrees coming up from the root vertex of $d$. This association is obviously a bijection thanks to the characterization of lemma \ref{characterisation}. By induction, the nerves of $\mathbb{C}_{d_1},\ldots,\mathbb{C}_{d_k}$ are contractible. Therefore, the nerve of $\mathbb{C}_2$ is also contractible.
			
			Finally, the intersection $\mathbb{C}^1 \cap \mathbb{C}_2$ contains only one object, namely the tree where the root vertex is black of type $2$ and all the other vertices are black of type $1$.
		
			
			
		
	
	
	
\end{proof}

To prove the first part of Theorem \ref{bullet} it remains to show that the fiber $F_d$ is contractible for any $d\in \mathbb{D}.$ 
This was already observed in the proof of Lemma \ref{smooth}.

For the proof of the second part of Theorem \ref{bullet}, we use a very similar scheme. 
Because of this we give only a brief outline of the proof, mostly focussing on the differences between the two cases.
			
Both polynomial monads $Bimod_{\bullet\to \bullet \leftarrow \bullet}$ and $Wbimod_{\bullet+\bullet}$ have $\mathbb{N}\coprod \mathbb{N}\coprod\mathbb{N} $ as their colours and both involve specifically decorated planar trees. 

For $Bimod_{\bullet\to \bullet \leftarrow \bullet}$, we use trees which now have four different types of vertices: white, black of type $1$ or $2$ and black vertices without any type.  An edge cannot connect two black vertices without type, and the path between any leaf or stump (valency one vertex) and the root should pass through one and only one vertex which is white or black of type $1$ or $2.$ 

\begin{figure}[H]
			\caption{A typical tree  for $Bimod_{\bullet\to \bullet \leftarrow \bullet}$}
			\[
			\begin{tikzpicture}[scale = .7]
			\draw (0,-1) -- (0,0);
			
			\draw (0,0) -- (-6,1);
			\draw (0,0) -- (-2,1);
			\draw (0,0) -- (2,1);
			\draw (0,0) -- (6,1);
			
			\draw (-6,1) -- (-7,2);
			\draw (-6,1) -- (-6,2);
			\draw (-6,1) -- (-5,2);
			
			\draw (-2,1) -- (-3.5,2);
			\draw (-2,1) -- (-2,2);
			\draw (-2,1) -- (-.5,2);
			
			\draw (2,1) -- (1,2);
			\draw (2,1) -- (3,2);
			
			\draw (6,1) -- (4.5,2);
			\draw (6,1) -- (5.5,2);
			\draw (6,1) -- (6.5,2);
			\draw (6,1) -- (7.5,2);
			
			\draw (-3.5,2) -- (-4.5,3);
			\draw (-3.5,2) -- (-3.5,3);
			\draw (-3.5,2) -- (-2.5,3);
			
			\draw (-.5,2) -- (-2,3);
			\draw (-.5,2) -- (-1,3);
			\draw (-.5,2) -- (0,3);
			\draw (-.5,2) -- (1,3);
			
			\draw (3,2) -- (2.3,3);
			\draw (3,2) -- (3.7,3);
			
			\draw[fill] (0,0) circle (4pt);
			\draw[fill=white] (-6,1) circle (4pt);
			\draw[fill] (-2,1) circle (4pt) node[left]{$1$};
			\draw[fill=white] (2,1) circle (4pt);
			\draw[fill] (6,1) circle (4pt) node[right]{$2$};
			\draw[fill] (-3.5,2) circle (4pt);
			\draw[fill] (-.5,2) circle (4pt);
			\draw[fill] (3,2) circle (4pt);
			\end{tikzpicture}
			\]
		\end{figure}

For the monad 	$Wbimod_{\bullet+\bullet}$ we use similar trees subject to the restrictions:
\begin{enumerate}		\item A tree may have at most one white vertex; 			
			\item Black vertices of type $2$ are on the left of the white vertex and black vertices of type $1$ are on the right of the white vertex;
			\item If a tree has all vertices black, the typed vertices must have the same type;
			\item There are exactly three copies of the tree without vertices (a free living edge)  each of them having $0$ as target but of different types: black $1,$ black $2$ or white.  		\end{enumerate} 

\begin{figure}[H]
	\caption{A typical tree for $Wbimod_{\bullet+\bullet}$}
	\[
	\begin{tikzpicture}[scale = .7]
	\draw (0,-1) -- (0,0);
	
	\draw (0,0) -- (-6,1);
	\draw (0,0) -- (-2,1);
	\draw (0,0) -- (2,1);
	\draw (0,0) -- (6,1);
	
	\draw (-6,1) -- (-7,2);
	\draw (-6,1) -- (-6,2);
	\draw (-6,1) -- (-5,2);
	
	\draw (-2,1) -- (-3.5,2);
	\draw (-2,1) -- (-2,2);
	\draw (-2,1) -- (-.5,2);
	
	\draw (2,1) -- (1,2);
	\draw (2,1) -- (3,2);
	
	\draw (6,1) -- (4.5,2);
	\draw (6,1) -- (5.5,2);
	\draw (6,1) -- (6.5,2);
	\draw (6,1) -- (7.5,2);
	
	\draw (-3.5,2) -- (-4.5,3);
	\draw (-3.5,2) -- (-3.5,3);
	\draw (-3.5,2) -- (-2.5,3);
	
	\draw (-.5,2) -- (-2,3);
	\draw (-.5,2) -- (-1,3);
	\draw (-.5,2) -- (0,3);
	\draw (-.5,2) -- (1,3);
	
	\draw (3,2) -- (2.3,3);
	\draw (3,2) -- (3.7,3);
	
	\draw[fill] (0,0) circle (4pt);
	\draw[fill] (-6,1) circle (4pt) node[left]{$2$};
	\draw[fill] (-2,1) circle (4pt) node[left]{$2$};
	\draw[fill=white] (2,1) circle (4pt);
	\draw[fill] (6,1) circle (4pt) node[right]{$1$};
	\draw[fill] (-3.5,2) circle (4pt);
	\draw[fill] (-.5,2) circle (4pt);
	\draw[fill] (3,2) circle (4pt);
	\end{tikzpicture}
	\]
\end{figure}

The map $f: Wbimod_{\bullet + \bullet} \to Bimod_{\bullet\to \bullet \leftarrow \bullet}$ is now obvious. It is the identity on colours and is the natural inclusion on other sets.

The description of the classifier $Bimod_{\bullet\to \bullet \leftarrow \bullet}^{Wbimod_{\bullet+\bullet}}$ follows the usual pattern. The role of constants is crucial as above. In the classifier $Bimod_{\bullet\to \bullet \leftarrow \bullet}^{Wbimod_{\bullet+\bullet}}$, they generate the morphisms of insertion of a stump  on one side of a tree. In particular, we have the following morphisms of replacement:

 \begin{enumerate}\item Morphism replacing a black vertex of type $1$ by a white vertex:
				\[
				\begin{tikzpicture}[scale=0.4]
				\draw (0,-.75) -- (0,1);
				\draw (0,0) -- (-1,1);
				\draw (0,0) -- (1,1);
				
				\draw (4,-.75) -- (4,0);
				\draw (4,0) -- (3,1);
				\draw (4,0) -- (5,1);
				
				\draw (5,1) -- (6,2);
				\draw (5,1) -- (5,2);
				\draw (5,1) -- (4,2);
				
				\draw (8,-.75) -- (8,1);
				\draw (8,0) -- (7,1);
				\draw (8,0) -- (9,1);
				
				\draw[fill] (0,0) circle (4pt) node[left]{$1$};
				\draw[fill=white] (3,1) circle (4pt);
				\draw[fill] (5,1) circle (4pt) node[right]{$1$};
				\draw[fill] (4,0) circle (4pt);
				\draw (2,0) node{$\longrightarrow$};
				\draw (6,0) node{$\longrightarrow$};
				\draw[fill=white] (8,0) circle (4pt);
				\end{tikzpicture}
				\]

				\item Morphism replacing a black vertex of type $2$ by a white vertex
				\[
				\begin{tikzpicture}[scale=0.4]
				\draw (0,-.75) -- (0,1);
				\draw (0,0) -- (-1,1);
				\draw (0,0) -- (1,1);
				
				\draw (4,-.75) -- (4,0);
				\draw (4,0) -- (3,1);
				\draw (4,0) -- (5,1);
				
				\draw (3,1) -- (2,2);
				\draw (3,1) -- (3,2);
				\draw (3,1) -- (4,2);
				
				\draw (8,-.75) -- (8,1);
				\draw (8,0) -- (7,1);
				\draw (8,0) -- (9,1);
				
				\draw[fill] (0,0) circle (4pt) node[left]{$2$};
				\draw[fill=white] (5,1) circle (4pt);
				\draw[fill] (3,1) circle (4pt) node[left]{$2$};
				\draw[fill] (4,0) circle (4pt);
				\draw (2,0) node{$\longrightarrow$};
				\draw (6,0) node{$\longrightarrow$};
				\draw[fill=white] (8,0) circle (4pt);
				\end{tikzpicture}
				\]
				
			\end{enumerate}
These two morphism classes are enough to proceed with a proof of contractibility of the classifier $Bimod_{\bullet\to \bullet \leftarrow \bullet}^{Wbimod_{\bullet+\bullet}}$
in exact analogy with the proof for $NOp_{\bullet\to \bullet \leftarrow \bullet}^{Bimod_{\bullet+\bullet}}.$
			
\end{proof}

\begin{remark} \label{RBimod}One can again consider a reduced version of the theory which in this context means that bimodules and weak bimodules we consider have unique operations in degrees $0$ and $1$ as in Turchin's paper \cite{T}[Part 1]. In particular, such weak bimodules are $0$-pointed automatically.  The corresponding classifiers do not contains vertices of valency $1$ and $2$ and, hence, are finite posets.  The following is a picture of the category
			$RBimod_{\bullet\to \bullet \leftarrow \bullet}^{RWbimod_{\bullet+\bullet}}(2_w):$ 
			
			\def\lefttree#1#2#3{
				\begin{scope}[shift={#1}]
					\begin{scope}[shift={(0,0)}]
						\draw (0,.3) -- (0,-.2);
						\draw (0,.3) -- (-.3,.6);
						\draw (0,.3) -- (.3,.6);
						\draw[fill] (0,.3) circle (2pt);
						\ifthenelse
						{
							#2 = 1
						}
						{
							\draw[fill] (0,.05) circle (2pt) node[left]{$1$};
						}
						{
							\ifthenelse
							{
								#2 = 2
							}
							{
								\draw[fill] (0,.05) circle (2pt) node[left]{$2$};
							}
							{
								\draw[fill=white] (0,.05) circle (2pt);
							}
						}
					\end{scope}
				\end{scope}
			}
			
			\def\righttree#1#2#3{
				\begin{scope}[shift={#1}]
					\begin{scope}[shift={(0,0)}]
						\draw (0,0) -- (-.375,.375);
						\draw (0,0) -- (.375,.375);
						\draw (0,0) -- (0,-.2);
						\draw[fill] (0,0) circle (2pt);
						\ifthenelse
						{
							#2 = 1
						}
						{
							\draw[fill] (-.2,.2) circle (2pt) node[left]{$1$};
						}
						{
							\ifthenelse
							{
								#2 = 2
							}
							{
								\draw[fill] (-.2,.2) circle (2pt) node[left]{$2$};
							}
							{
								\draw[fill=white] (-.2,.2) circle (2pt);
							}
						}
						\ifthenelse
						{
							#3 = 1
						}
						{
							\draw[fill] (.2,.2) circle (2pt) node[right]{$1$};
						}
						{
							\ifthenelse
							{
								#3 = 2
							}
							{
								\draw[fill] (.2,.2) circle (2pt) node[right]{$2$};
							}
							{
								\draw[fill=white] (.2,.2) circle (2pt);
							}
						}
					\end{scope}
				\end{scope}
			}
			
			\def\middletree#1#2{
				\begin{scope}[shift={#1}]
					\draw (0,0) -- (-.375,.375);
					\draw (0,0) -- (.375,.375);
					\draw (0,0) -- (0,-.2);
					\ifthenelse
					{
						#2 = 1
					}
					{
						\draw[fill] (0,0) circle (2pt) node[right]{$1$};
					}
					{
						\ifthenelse
						{
							#2 = 2
						}
						{
							\draw[fill] (0,0) circle (2pt) node[right]{$2$};
						}
						{
							\draw[fill=white] (0,0) circle (2pt);
						}
					}
				\end{scope}
			}
			
			\def\line#1#2#3#4{
				\draw[->] ({#1 + 1.3*(#3-#1)/sqrt(((#3-#1)*(#3-#1))+((#4-#2)*(#4-#2)))/2},{#2 + 1.3*(#4-#2)/sqrt(((#3-#1)*(#3-#1))+((#4-#2)*(#4-#2)))/2}) -- ({#3 - 1.3*(#3-#1)/sqrt(((#3-#1)*(#3-#1))+((#4-#2)*(#4-#2)))/2},{#4 - 1.3*(#4-#2)/sqrt(((#3-#1)*(#3-#1))+((#4-#2)*(#4-#2)))/2});
			}

			\begin{figure}[H]\label{fullcategory}
				\[
				\begin{tikzpicture}[scale = 0.7]
				\begin{scope}[shift={(0,.1)}]
				\line{0}{-3}{0}{0};
				
				\line{-6}{-3}{0}{-3};
				\line{6}{-3}{0}{-3};
				\line{-6}{-3}{0}{0};
				\line{6}{-3}{0}{0};
				\line{-6}{-3}{-6}{0};
				\line{6}{-3}{6}{0};
				
				
				\line{-6}{0}{0}{0};
				\line{6}{0}{0}{0};
				
				\line{-6}{3}{0}{0};
				\line{-3}{3}{0}{0};
				\line{0}{3}{0}{0};
				\line{3}{3}{0}{0};
				\line{6}{3}{0}{0};
				
				\line{-6}{3}{-3}{3};
				\line{0}{3}{-3}{3};
				\line{0}{3}{3}{3};
				\line{6}{3}{3}{3};
				
				
				\line{-6}{3}{0}{6};
				\line{-3}{3}{0}{6};
				\line{0}{3}{0}{6};
				\line{3}{3}{0}{6};
				\line{6}{3}{0}{6};
				\line{-3}{6}{0}{6};
				\line{0}{9}{0}{6};
				\line{3}{6}{0}{6};
				
				
				\line{0}{9}{-3}{6};
				\line{0}{9}{3}{6};
				\line{-6}{3}{-3}{6};
				\line{6}{3}{3}{6};
				\line{-6}{3}{-6}{0};
				\line{6}{3}{6}{0};
				\end{scope}
				
				\lefttree{(-6,-3)}{1}{1};
				\lefttree{(0,-3)}{3}{1};
				\lefttree{(6,-3)}{2}{1};
				\middletree{(-6,0)}{1};
				\middletree{(0,0)}{3};
				\middletree{(6,0)}{2};
				\righttree{(0,9)}{1}{2};
				\righttree{(-3,6)}{1}{3};
				\righttree{(0,6)}{3}{3};
				\righttree{(3,6)}{3}{2};
				\righttree{(-6,3)}{1}{1};
				\righttree{(-3,3)}{3}{1};
				\righttree{(0,3)}{2}{1};
				\righttree{(3,3)}{2}{3};
				\righttree{(6,3)}{2}{2};
				\end{tikzpicture}
				\]
			\end{figure}
			
			The nerve of this poset is clearly contractible.
			
		\end{remark}

\section{Second cofinality theorem}

Let $\mathrm{Bimod}_+$ be the category of $1$-pointed $Ass$-bimodules.  Analogously, let  $\mathrm{WBimod}_+$ be the category of $0$-pointed weak $Ass$-bimodules.  These categories are categories of algebras of polynomial monads $Bimod_+$ and $WBimod_+$ respectively obtained by the pushouts described in the Theorem \ref{+point}. 

\begin{theorem}\label{cofinalmaps}
There are homotopically cofinal maps of polynomial monads
$$f: Bimod_+\to NOp_{**}$$
and
$$g:WBimod_+\to Bimod_{**}$$
\end{theorem}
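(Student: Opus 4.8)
The plan is to deduce both statements from Theorem~\ref{bullet}, using the mapping-space characterisation of homotopically cofinal maps (Corollary~\ref{weakequivalencemappingspaces}): the idea is that pinning the two auxiliary operads (resp.\ $Ass$-bimodules) occurring in Theorem~\ref{bullet} down to the terminal operad $\mathcal{A}ss$ does not change the relevant homotopy mapping spaces. I will spell out the case of $f:Bimod_+\to NOp_{**}$; the case of $g:WBimod_+\to Bimod_{**}$ is obtained by substituting ``$Ass$-bimodule'' for ``non-symmetric operad'', $Bimod_*$ for $NOp_*$, and ``$0$-pointed'' for ``$1$-pointed'' throughout, and invoking the second half of Theorem~\ref{bullet}.

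By Corollary~\ref{weakequivalencemappingspaces} it is enough to show that for each simplicial double-multiplicative operad $X=(\mathcal{C},\mu_1,\mu_2)$ the canonical map $Map_{\Alg_{Bimod_+}}(1,f^*X)\to Map_{\Alg_{NOp_{**}}}(1,X)$ is a weak equivalence. Fix such an $X$ and form the cospan $Y=(\mathcal{A}ss\xrightarrow{\mu_1}\mathcal{C}\xleftarrow{\mu_2}\mathcal{A}ss)$ of non-symmetric operads, a simplicial algebra of $NOp_{\bullet\to\bullet\leftarrow\bullet}$; under the restriction functor of the map $Bimod_{\bullet+\bullet}\to NOp_{\bullet\to\bullet\leftarrow\bullet}$ of Section~\ref{cofinalresults}, $Y$ goes to the triple $(\mathcal{A}ss,\mathcal{C},\mathcal{A}ss)$ whose middle term is, by inspection, $\mathcal{C}$ equipped with exactly the $1$-pointed $Ass$-bimodule structure carried by $f^*X$ (left action via $\mu_1$, right action via $\mu_2$, point the operad unit). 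Since $NOp_{**}$ is the pushout $NOp_*\sqcup_{NOp}NOp_*$ of~(\ref{double}) and $NOp_{\bullet\to\bullet\leftarrow\bullet}$ is a constant twisted Boardman-Vogt product over $\Lambda$ with value $NOp$ (Section~\ref{BVP}), the corresponding categories of algebras are homotopy pullbacks, and hence the mapping spaces out of their terminal algebras split as homotopy limits. Because $\mathcal{A}ss$ is the terminal non-symmetric operad, $Map_{\Alg_{NOp}}(1,\mathcal{A}ss)$ is contractible, and because $(\mathcal{A}ss,\mathrm{id})$ is simultaneously the terminal and the initial object of $\mathrm{NOp}_*$, $Map_{\Alg_{NOp_*}}(1,(\mathcal{C},\mu_i))$ is contractible as well. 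Consequently $Map_{\Alg_{NOp_{**}}}(1,X)$ and $Map_{\Alg_{NOp_{\bullet\to\bullet\leftarrow\bullet}}}(1,Y)$ both reduce to the same homotopy pullback of $\{\mu_1\}\hookrightarrow Map_{\Alg_{NOp}}(1,\mathcal{C})\hookleftarrow\{\mu_2\}$; dually, $Map_{\Alg_{Bimod_+}}(1,f^*X)$ and $Map_{\Alg_{Bimod_{\bullet+\bullet}}}(1,f^*Y)$ both reduce to the same space, the non-contractible factor being the space of internal $1$-pointed $Ass$-bimodules in $\mathcal{C}$ over the contractible base $Map_{\Alg_{NOp}}(1,\mathcal{A}ss)\times Map_{\Alg_{NOp}}(1,\mathcal{A}ss)$.

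These reductions are natural with respect to the restriction maps coming from $Bimod_{\bullet+\bullet}\to NOp_{\bullet\to\bullet\leftarrow\bullet}$, so they assemble into a commutative square whose vertical arrows are the weak equivalences just produced, whose bottom arrow $Map_{\Alg_{Bimod_{\bullet+\bullet}}}(1,f^*Y)\to Map_{\Alg_{NOp_{\bullet\to\bullet\leftarrow\bullet}}}(1,Y)$ is a weak equivalence by Theorem~\ref{bullet} (through Corollary~\ref{weakequivalencemappingspaces}), and whose top arrow is the map under scrutiny. It follows that the top arrow is a weak equivalence for every $X$, and Corollary~\ref{weakequivalencemappingspaces} then gives that $f:Bimod_+\to NOp_{**}$ is homotopically cofinal. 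Running the same chain of identifications with $Wbimod_{\bullet+\bullet}$, $Bimod_{\bullet\to\bullet\leftarrow\bullet}$, $Bimod_{**}$ and $WBimod_+$ in place of the operadic data, and with $\mathcal{A}ss$ now the terminal $Ass$-bimodule and $(\mathcal{A}ss,\mathrm{id})$ the terminal and initial object of $\mathrm{Bimod}_*$, yields the cofinality of $g$ and completes the proof of Theorem~\ref{cofinalmaps}.

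I expect the main obstacle to be the careful justification of the homotopy-limit decompositions of the mapping spaces out of the terminal algebras over $NOp_{\bullet\to\bullet\leftarrow\bullet}$ and $Bimod_{\bullet+\bullet}$, together with the identification, after restriction along the Section~\ref{cofinalresults} map, of the unique non-contractible factor with the mapping space over $NOp_{**}$ (resp.\ $Bimod_+$); this requires some bookkeeping with the two base points $\mu_1,\mu_2$ so that the two homotopy pullbacks are seen to coincide on the nose. A more conceptual packaging would view $f:Bimod_+\to NOp_{**}$ as a base change of $f:Bimod_{\bullet+\bullet}\to NOp_{\bullet\to\bullet\leftarrow\bullet}$ along the operad-pinning maps and quote stability of homotopically cofinal maps under base change, but since the pinning maps are not cartesian morphisms of polynomial monads one is forced to argue directly with mapping spaces as above.
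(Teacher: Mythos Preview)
Your strategy---deduce Theorem~\ref{cofinalmaps} from Theorem~\ref{bullet} by arguing that ``pinning the two auxiliary operads to $\mathcal{A}ss$'' leaves the relevant mapping spaces unchanged---is quite different from the paper's proof, and it has a genuine gap at precisely the point you flag in your last paragraph.

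The paper does not argue via mapping spaces at all. It builds the commutative diagram~(\ref{reduction}) and compares the classifier $NOp_{\bullet\to\bullet\leftarrow\bullet}^{Bimod_{\bullet+\bullet}}$ (known to be contractible by Theorem~\ref{bullet}) directly to $NOp_{**}^{Bimod_+}$ through an intermediate ``semireduced'' step $NOp_{\circ\to\bullet\leftarrow\circ}^{Bimod_{\circ+\circ}}$, by producing explicit functors $G$ and $E$ between the underlying tree categories and showing that $G$ has a left adjoint (erase versus reinsert valency-$2$ black vertices) while $E$ is proper with contractible fibres (both categories are posets and each fibre has a terminal object). Hence $N(G)$ and $N(E)$ are weak equivalences and the target classifier is contractible.

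The gap in your argument is the identification
\[
Map_{\Alg_{Bimod_{\bullet+\bullet}}}\bigl(1,(\mathcal{A}ss,\mathcal{C},\mathcal{A}ss)\bigr)\;\simeq\;Map_{\Alg_{Bimod_+}}(1,\mathcal{C}).
\]
You justify this by saying the ``base'' $Map_{\Alg_{NOp}}(1,\mathcal{A}ss)^{2}$ is contractible, but that is not enough: you would need the forgetful map $\Alg_{Bimod_{\bullet+\bullet}}\to\Alg_{NOp}\times\Alg_{NOp}$ to induce a homotopy fibre sequence on derived mapping spaces with fibre the mapping space in $\Alg_{Bimod_+}$, and nothing in the paper provides this. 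Unlike $NOp_{\bullet\to\bullet\leftarrow\bullet}$ (a genuine diagram category over the cospan shape~$\Lambda^{op}$, where a homotopy-end formula applies) or $NOp_{**}$ (a double slice, where the mapping space is the homotopy fibre of a diagonal), the monad $Bimod_{\bullet+\bullet}$ is neither: the bimodule component is entangled with both operad components, and a cofibrant replacement of the terminal object mixes all three. What you actually need is that the vertical composites $Bimod_{\bullet+\bullet}\to Bimod_+$ and $NOp_{\bullet\to\bullet\leftarrow\bullet}\to NOp_{**}$ in~(\ref{reduction}) are themselves homotopically cofinal; establishing that is not easier than the theorem itself, and in effect the paper's combinatorial lemmas on $G$ and $E$ are exactly what would prove it. A smaller issue: your passage ``pushout of monads $\Rightarrow$ homotopy pullback of algebra categories $\Rightarrow$ mapping spaces split'' is also not automatic---that the square~(\ref{double}) for $NOp$ is homotopically cofinal is a separate lemma, proved combinatorially in Section~12.
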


\begin{proof} As before we give the details of the proof of only the first part of the theorem. 
The proof of the part concerning bimodules versus weak bimodules is very similar and we leave it as an exercise. 

Firstly we have to describe the monads  $NOp_{**}$ and $Bimod_+$ explicitly. 
The first monad is represented by a polynomial 
  \[
			\xymatrix{
				\mathbb{N}  && Ptr_{*\mathrm{O}*}^{*} \ar[ll]_-s \ar[rr]^-{p} && Ptr_{*\mathrm{O}*} \ar[rr]^-t && \mathbb{N} \ .			}
			\] 
 Here,   $Ptr_{*\mathrm{O}*}$ is the subset of  $Ptr_{1\mathrm{O}2}$ consisting of trees with a condition that no edge can connect two black vertices of the same type. Similarly, $Ptr_{*\mathrm{O}*}$ is the subset of  $Ptr_{1\mathrm{O}2}^*$ with one white vertex marked (so we do not allow insertion to black vertices). 
 The rest of the description of the monad is very similar to the monad 
 $NOp_{\bullet\to\bullet\leftarrow\bullet}.$   

Analogously, the monad $Bimod_+$ is represented by a polynomial 
 \[
			\xymatrix{
				\mathbb{N}  && Ptr_{*\mathrm{B}*}^{*} \ar[ll]_-s \ar[rr]^-{p} && Ptr_{*\mathrm{B}*} \ar[rr]^-t && \mathbb{N}
			}
			\] 
where 			
$Ptr_{*\mathrm{B}*}\subset  Ptr_{1\mathrm{B}2} $  whose elements are subject to the condition as above. The rest of the structure is also similar to the monad $Bimod_{\bullet+\bullet}.$ It is also clear how to construct a map  			
	\begin{equation}\label{map2} f: Bimod_+\to NOp_{**}.\end{equation} 
We are going to prove that  this map of polynomial monads
is homotopically cofinal. 

We will need another map of polynomial monads intermediate between (\ref{map1}) and (\ref{map2}). 
Namely, let $NOp_{\circ\to \bullet \leftarrow \circ}$ be the monad whose algebras are cospans of non-symmetric operads 
{ $$\mathcal{A}\to\mathcal{C}\leftarrow \mathcal{B}$$ 
where $\mathcal{A}$ and $\mathcal{B}$ are such that $\mathcal{A}(1)\cong \mathcal{B}(1) \cong 1.$} 
\begin{remark} We will call such non-symmetric operads {\it semireduced} because they do not have nontrivial unary operations but may have nontrivial operations of arity $0.$ 
\end{remark}
 Explicitly, such a monad given by a polynomial similar to  (\ref{polyforspan}) but corresponding trees are {\it semireduced}; that is, they do not contain black vertices of valency $2.$  
Also, the colours for black types are now $\mathbb{N}\setminus \{1\}.$ 

Also, let $Bimod_{\circ+\circ}$ be the monad whose algebras are given by triples of two semireduced operads and a $1$-pointed bimodule over them. We have a map of the monads $$f:Bimod_{\circ+\circ}\to  NOp_{\circ\to \bullet \leftarrow \circ} .$$  

All these monads are included in the following commutative diagram of cartesian maps:

\begin{equation}\label{reduction}
			\xymatrix{
				Bimod_{\bullet+\bullet}\ar[rr]^{} \ar[dd]_{}&& Bimod_{\circ+\circ}\ar[rr]^{} \ar[dd]_{}           && Bimod_+\ar[dd]^{} \\
				\\
				NOp_{\bullet\to \bullet \leftarrow \bullet} \ar[rr]^{h} &&   NOp_{\circ\to \bullet \leftarrow \circ} \ar[rr]^{k}       &&NOp_{**}
			}
			\end{equation}	
where the horizontal map $h$ acts on colours as follows: on each $\mathbb{N}_{b_i}, i=1,2$ it is defined by $h(n) = n, n\ne 1$ and $h(1) = 0.$ On $\mathbb{N}_w$ it acts identically.  
On a tree it erases all valency $2$ black points.   
The horizontal map $k$ on colours sends each element from black summand $\mathbb{N}_{b_i}$ to $0$ and it is an identity on $\mathbb{N}_w.$ On operations it sends a tree to the maximal possible contraction of this tree with respect to operadic contractions of black vertices. In other words, it contracts each edge which connects two black vertices of the same type to a black vertex of this type.  The top horizontal maps act similarly.  

The square (\ref{reduction})  induces the following maps of classifiers
$$G: NOp_{\bullet\to \bullet \leftarrow \bullet}^{Bimod_{\bullet+\bullet}}\to h^*(NOp_{\circ\to \bullet \leftarrow \circ}^{Bimod_{\circ+\circ}})$$
$$E: NOp_{\circ\to \bullet \leftarrow \circ}^{Bimod_{\circ+\circ}}\to k^*( NOp_{**}^{Bimod_+})$$

\begin{lem} The underlying functor of  $G$ has a left adjoint $K.$
\end{lem}
\begin{proof}
Let $n$ be one of the colours of the monad $NOp_{\bullet\to \bullet \leftarrow \bullet}.$ We will denote the underlying functor of $G$ restricted to $n$ by the same letter $G$ to simplify the language. Then $G$ acts on objects of $NOp_{\bullet\to \bullet \leftarrow \bullet}^{Bimod_{\bullet+\bullet}}$ by erasing valency $2$ black points. We define $K$ on objects to be an inclusion of semireduced trees to the set of all trees. We claim that this inclusion can be extended to a functor.

Indeed, the set of generators for morphisms in the classifier $NOp_{\circ\to \bullet \leftarrow \circ}^{Bimod_{\circ+\circ}}$ is the same as in Proposition \ref{genrel} except that in (d) we do not allow the insertion of black points on an edge. The relations are also the same as in Proposition \ref{genrel}. 
To define $K$ on morphisms is to define it on generators and we simply can do it by mapping a generator to the corresponding generator. Since relations are the same we get a functor.  

Finally, the unit of the adjunction is the identity and the counit is a morphism $KG(a)\to a$ which inserts all black points back to $a$ after $G$ erases them.  
It is trivial to check that unit and counit satisfy the two triangle relations. 
\end{proof}

\begin{lem} The underlying functor of $E$ is proper and has contractible fibres. 
\end{lem} 
\begin{proof} To see this we first want to prove that the underlying categories of $\mathbb{C} =NOp_{\circ\to \bullet \leftarrow \circ}^{Bimod_{\circ+\circ}}$  and  $\mathbb{D} =NOp_{**}^{Bimod_+}$  are  posets. 

 For an object $a\in \mathbb{C}$, let us denote by $(\mathbb{C},a)$ the full subcategory of $\mathbb{C}$ spanned by the objects $b$ for which there exists a morphism $b\to a.$  Observe that by construction of classifiers the category $(\mathbb{C},a)$ is a product of categories $(\mathbb{C}, { a^v})$ where $v$  runs over the vertices of $a$ and ${ a^v}$ is the corolla determined by $v$ (that is, a corolla whose unique vertex has the same colour as $v$ and whose set of leaves is equal to the set of incoming edges of $v$). Hence, it is enough to prove that each  $(\mathbb{C},{ a^v})$ is a poset. Moreover, it is enough to prove that ${ a^v}$ is the terminal object in this category.
 
 If $v$ is a black vertex of a type $i =1,2$, it is clear that $(\mathbb{C} {a^v})$ is isomorphic to the category of trees and their contractions whose all vertices are black of the same type $i.$ This is a poset with the terminal object ${a^v}$ as easily follows from axioms of semireduced non-symmetric operad.
 
 If $v$ is a white vertex, we can use Lemma \ref{characterisation} to describe the objects of $(\mathbb{C},{ a^v}).$ All morphisms are operadic contractions between black vertices and bimodule contractions for white and black vertices. The fact that ${a^v}$ is terminal here follows from the bimodule relations by a standard combinatorial technique involving the diamond lemma and induction on the number of black vertices.   
 
A similar kind  of proof works  for $\mathbb{D}.$

The fiber $E_d$ of the functor $E$ over any tree $d\in \mathbb{D}$  contains $d$ as well. Moreover, it is very obvious that $d$ is the terminal object of this fiber. So, given $f:d_0\to d_1$ in $\mathbb{C}$ and $c\in E_{d_0}$, an object of the lifting category of $f$ under $c_0$ is just any morphism  $g:c_0\to c_1$ where $c_1\in E_{d_1}.$ Since $d_1$ is the terminal object in $E_{d_1}$, we have a unique morphism from $u:c_1\to d_1$ and hence, a morphism in the lifting category from $g\to g\cdot u.$ Since we are in a poset, the morphism $g\cdot u$ does not depend on $g$. Hence, it is the terminal object of the lifting category. This completes the proof. 
\end{proof} 

\begin{remark} To illustrate the idea of the proof above  one can consider a reduced version of the classifier $NOp_{**}^{Bimod_+}$.
The following picture represents the category $d^*RNOp_{**}^{RBimod_+}(3_w)$ which is the target of the functor 
$G(3_w).$ The source of this functor is represented by the category shown at Remark \ref{RNOP}.

			\def\lefttree#1#2#3{
				\begin{scope}[shift={#1}]
					\begin{scope}[shift={(.15,0)}]
						\draw (0,0) -- (-.5,.5);
						\draw (0,0) -- (.25,.25);
						\draw (-.25,.25) -- (0,.5);
						\draw (0,0) -- (0,-.2);
						\ifthenelse
						{
							#2 = 1
						}
						{
							\draw[fill] (0,0) circle (2pt) node[right]{$1$};
						}
						{
							\ifthenelse
							{
								#2 = 2
							}
							{
								\draw[fill] (0,0) circle (2pt) node[right]{$2$};
							}
							{
								\draw[fill=white] (0,0) circle (2pt);
							}
						}
						\ifthenelse
						{
							#3 = 1
						}
						{
							\draw[fill] (-.25,.25) circle (2pt) node[left]{$1$};
						}
						{
							\ifthenelse
							{
								#3 = 2
							}
							{
								\draw[fill] (-.25,.25) circle (2pt) node[left]{$2$};
							}
							{
								\draw[fill=white] (-.25,.25) circle (2pt);
							}
						}
					\end{scope}
				\end{scope}
			}
			
			\def\righttree#1#2#3{
				\begin{scope}[shift={#1}]
					\begin{scope}[shift={(-.15,0)}]
						\draw (0,0) -- (.5,.5);
						\draw (0,0) -- (-.25,.25);
						\draw (.25,.25) -- (0,.5);
						\draw (0,0) -- (0,-.2);
						\ifthenelse
						{
							#2 = 1
						}
						{
							\draw[fill] (0,0) circle (2pt) node[left]{$1$};
						}
						{
							\ifthenelse
							{
								#2 = 2
							}
							{
								\draw[fill] (0,0) circle (2pt) node[left]{$2$};
							}
							{
								\draw[fill=white] (0,0) circle (2pt);
							}
						}
						\ifthenelse
						{
							#3 = 1
						}
						{
							\draw[fill] (.25,.25) circle (2pt) node[right]{$1$};
						}
						{
							\ifthenelse
							{
								#3 = 2
							}
							{
								\draw[fill] (.25,.25) circle (2pt) node[right]{$2$};
							}
							{
								\draw[fill=white] (.25,.25) circle (2pt);
							}
						}
					\end{scope}
				\end{scope}
			}
			
			\def\middletree#1#2{
				\begin{scope}[shift={#1}]
					\draw (0,0) -- (-.375,.375);
					\draw (0,0) -- (0,.375);
					\draw (0,0) -- (.375,.375);
					\draw (0,0) -- (0,-.2);
					\ifthenelse
					{
						#2 = 1
					}
					{
						\draw[fill] (0,0) circle (2pt) node[right]{$1$};
					}
					{
						\ifthenelse
						{
							#2 = 2
						}
						{
							\draw[fill] (0,0) circle (2pt) node[right]{$2$};
						}
						{
							\draw[fill=white] (0,0) circle (2pt);
						}
					}
				\end{scope}
			}
			
			\def\line#1#2#3#4{
				\draw[->] ({#1 + 1.3*(#3-#1)/sqrt(((#3-#1)*(#3-#1))+((#4-#2)*(#4-#2)))/2},{#2 + 1.3*(#4-#2)/sqrt(((#3-#1)*(#3-#1))+((#4-#2)*(#4-#2)))/2}) -- ({#3 - 1.3*(#3-#1)/sqrt(((#3-#1)*(#3-#1))+((#4-#2)*(#4-#2)))/2},{#4 - 1.3*(#4-#2)/sqrt(((#3-#1)*(#3-#1))+((#4-#2)*(#4-#2)))/2});
			}

		\begin{figure}[H]\label{fullcategory}
		\[
		\begin{tikzpicture}[scale = 0.7]
			\begin{scope}[shift={(0,.1)}]
				\line{0}{8}{-6}{6};
				\line{0}{8}{0}{6};
				\line{0}{8}{6}{6};
				
				\line{-6}{6}{0}{6};
				\line{6}{6}{0}{6};
				
				\line{-2}{2}{0}{6};
				\line{-2}{2}{0}{0};
				\line{0}{2}{-2}{2};
				\line{0}{2}{0}{6};
				\line{0}{2}{0}{0};
				\line{0}{2}{2}{2};
				\line{2}{2}{0}{6};
				\line{2}{2}{0}{0};
				
				\line{-7}{0}{-6}{6};
				\line{-7}{0}{0}{6};
				\line{-7}{0}{-2}{2};
				\line{-7}{0}{0}{0};
				\line{-7}{0}{-2}{-2};
				\line{-7}{0}{-6}{-6};
				\line{-7}{0}{0}{-6};
				
				\line{7}{0}{6}{6};
				\line{7}{0}{0}{6};
				\line{7}{0}{2}{2};
				\line{7}{0}{0}{0};
				\line{7}{0}{2}{-2};
				\line{7}{0}{6}{-6};
				\line{7}{0}{0}{-6};
				
				\line{-2}{-2}{0}{-6};
				\line{-2}{-2}{0}{0};
				\line{0}{-2}{-2}{-2};
				\line{0}{-2}{0}{-6};
				\line{0}{-2}{0}{0};
				\line{0}{-2}{2}{-2};
				\line{2}{-2}{0}{-6};
				\line{2}{-2}{0}{0};
				
				\line{-6}{-6}{0}{-6};
				\line{6}{-6}{0}{-6};
				
				\line{0}{-8}{-6}{-6};
				\line{0}{-8}{0}{-6};
				\line{0}{-8}{6}{-6};
			\end{scope}
		
		\lefttree{(0,-8)}{1}{2};
		
		\lefttree{(-6,-6)}{1}{3};
		\lefttree{(0,-6)}{3}{3};
		\lefttree{(6,-6)}{3}{2};
		
		\lefttree{(0,-2)}{2}{1};
		
		\lefttree{(-2,-2)}{3}{1};
		\lefttree{(2,-2)}{2}{3};
		
		\middletree{(-7,0)}{1};
		\middletree{(0,0)}{3};
		\middletree{(7,0)}{2};
		
		\righttree{(-2,2)}{3}{1};
		\righttree{(2,2)}{2}{3};
		
		\righttree{(0,2)}{2}{1};
		
		\righttree{(-6,6)}{1}{3};
		\righttree{(0,6)}{3}{3};
		\righttree{(6,6)}{3}{2};
		
		\righttree{(0,8)}{1}{2};
		\end{tikzpicture}
		\]
	\end{figure}
		
The functor $G$ simply `shrinks' certain faces of the polytope from \ref{RNOP}. This corresponds exactly to the contraction discribed by Turchin in \cite{T}[p. 34].  

\end{remark}

We can finish the proof of Theorem \ref{cofinalmaps} by observing that $N(G)$ is a weak equivalence (since $G$ is a right adjoint) and $N(E)$ is a weak equivalence (since it is proper with contractible fiber).  
\end{proof}

\section{Dwyer-Hess-Turchin's delooping 	theorems}
	
	Our goal is to prove the following theorem first established independently in \cite{DH} and \cite{T}.
	\begin{theorem}\label{DHT1} For any simplicial multiplicative operad $\mathcal{O}$, there exists a fibration sequence of simplicial sets
	$$\Omega Map_{\mathrm{NOp}}({ u^*}Ass,u^*\mathcal{O})\to Map_{\mathrm{Bimod}}({ v^*} Ass,{v}^*\mathcal{O})\to fib(\mathcal{O}_1 ).$$
	
	 For any simplicial pointed bimodule  $\mathcal{B}$, there exists a fibration sequence of simplicial sets
	$$\Omega Map_{\mathrm{Bimod}}({{ v^*} }Ass,{b}^*\mathcal{B})\to Map_{\mathrm{WBimod}}({{w^*v^*} } Ass,{w^* b^*} \mathcal{B})\to fib(\mathcal{B}_0 ).$$	

	\end{theorem}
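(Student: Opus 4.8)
The plan is to deduce Theorem \ref{DHT1} from the formal machinery developed in Part 1 together with the two cofinality results of Sections 10 and 11. The starting point is the observation (made in Section 9) that $\mathrm{NOp}$, $\mathrm{Bimod}$ and $\mathrm{WBimod}$ are categories of algebras over polynomial monads, that the forgetful functors $u^*,v^*,b^*,w^*$ are restrictions along cartesian maps of polynomial monads, and crucially that a multiplicative operad $\mathcal O$ (resp.\ a pointed bimodule $\mathcal B$) is precisely a $1$-pointed algebra in the appropriate sense: the unit $\mathcal A ss\to\mathcal O$ gives a marked point in degree $1$, so $u^*Ass$-based mapping spaces are mapping spaces out of the terminal algebra of a pointed monad. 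Thus $Map_{\mathrm{NOp}}(u^*Ass,u^*\mathcal O)\simeq Map_{\Alg_{NOp_*}}(1,\mathcal O)$, and similarly for the bimodule and weak bimodule cases.

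First I would set up the diagram needed for the Formal Fibration Sequence Theorem (Theorem \ref{+point}). Working with the polynomial monad $S=NOp_{**}$ (double multiplicative operads), I take $T=Bimod_+$ with the cartesian map $f:Bimod_+\to NOp_{**}$ of Theorem \ref{cofinalmaps}, and I take $i$ to be the colour $1\in\mathbb N$. The point is that the composite $Id\to Id_{\mathbb N}\to NOp_{**}\to$ (picking out degree $1$) factors through $Id_+$ exactly because a multiplicative operad has its distinguished unit point in degree $1$ and $f^*$ preserves it; this gives the map $F:(\,NOp_{**}\,)_+\to NOp_{**}$ — but in fact $Bimod_+$ already \emph{is} $(NOp_{**})_+$ up to the relevant identification via the pushout square (\ref{ieta}), so the hypothesis of Theorem \ref{+point} is precisely the statement that square (\ref{ieta}) is homotopically cofinal. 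I would verify that square (\ref{ieta}) for these monads is the homotopically cofinal square whose cofinality is equivalent, via Proposition \ref{dddd} and the description of its classifier as the twisted Boardman--Vogt product, to the cofinality of $f:Bimod_+\to NOp_{**}$ established in Theorem \ref{cofinalmaps}. Granting that, Theorem \ref{+point} applied to a fibrant replacement of the $NOp_{**}$-algebra determined by $\mathcal O$ (a simplicial multiplicative operad, viewed as a double multiplicative operad via $\epsilon$ on one side) yields the fibration sequence
$$Map_{\Alg_{Bimod_+}}(1,F^*\mathcal O)\to Map_{\Alg_{NOp_{**}}}(1,\mathcal O)\to fib(\mathcal O_1),$$
and translating back through the isomorphisms of algebra categories from Section 9 gives exactly the first displayed fibration sequence of Theorem \ref{DHT1}, since $Map_{\Alg_{NOp_{**}}}(1,\mathcal O)\simeq \Omega Map_{\mathrm{NOp}}(u^*Ass,u^*\mathcal O)$ by the Formal Delooping Theorem (Theorem \ref{pointedalgebras}) applied to the double-pointed monad $NOp_{**}$.

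The second fibration sequence is obtained by the identical argument with $NOp$ replaced by $Bimod$, $NOp_{**}$ by $Bimod_{**}$, and $Bimod_+$ by $WBimod_+$, invoking the second half of Theorem \ref{cofinalmaps} ($g:WBimod_+\to Bimod_{**}$ is homotopically cofinal) and the fact (Proposition in Section 9) that $Bimod$ satisfies the hypotheses of Theorem \ref{pointedalgebras}; here the distinguished colour is $0\in\mathbb N$ rather than $1$, matching the fact that a $0$-pointed weak bimodule has its marked point in degree $0$ and $fib(\mathcal B_0)$ appears as the base of the fibration.

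The main obstacle I anticipate is the bookkeeping identifying the relevant pushout squares of polynomial monads with the cofinal squares controlled by Theorems \ref{bullet} and \ref{cofinalmaps} — i.e.\ checking that the square (\ref{ieta}) built from $NOp$ at colour $1$ really has classifier equal (up to nerve-equivalence) to the local classifier whose contractibility was proved in Section 11, and likewise that the delooping of Theorem \ref{pointedalgebras} applies to $NOp_{**}$ and $Bimod_{**}$ (which requires that the pointed monads $NOp_*$, $Bimod_*$ be polynomial, already asserted in Section 9, and that the double-point square (\ref{double}) be homotopically cofinal, which follows from Theorem \ref{cofinalmaps} combined with Proposition \ref{dddd}(2)). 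Once these compatibilities are in place, the theorem is a formal concatenation of the delooping theorem and the fibration-sequence theorem, as the paper's introduction promised.
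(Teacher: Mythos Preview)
Your overall plan matches the paper's: the theorem is assembled from Theorem \ref{pointedalgebras} (the delooping), Theorem \ref{+point} (the fibration sequence), and the cofinality of $f:Bimod_+\to NOp_{**}$ from Theorem \ref{cofinalmaps}. However, there are two genuine problems.

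First, your setup of Theorem \ref{+point} is garbled. In that theorem $T$ is the \emph{unpointed} monad and $T_+$ is built from it by the pushout (\ref{ieta}); you should take $T=Bimod$ (so $T_+=Bimod_+$), not $T=Bimod_+$. Relatedly, the assertion ``$Bimod_+$ already is $(NOp_{**})_+$'' is false: $(NOp_{**})_+$ would be the monad for double-multiplicative operads with yet another marked point, which is a different object.

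Second, and more seriously, you claim that the homotopical cofinality of the squares (\ref{double}) and (\ref{ieta}) --- that is, the square $NOp\rightrightarrows NOp_*\to NOp_{**}$ and the square $Id\to Bimod$, $Id\to Id_+$, $\to Bimod_+$ --- follows from the cofinality of $Bimod_+\to NOp_{**}$ via Proposition \ref{dddd}. It does not: these are logically independent statements, and Proposition \ref{dddd} gives no reduction of one to the other. The paper proves each of these two squares cofinal by a separate, direct argument: in each case one writes down the classifier $D^{\oint F}$ explicitly and exhibits a chain of adjoint functors connecting it to a contractible category (for (\ref{squarenop}) the comparison is with $NOp^{\oint NOp}$; for (\ref{squarebimod}) with $Bimod^{Bimod}$). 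These two lemmas are the real content missing from your proposal, and without them neither Theorem \ref{pointedalgebras} nor Theorem \ref{+point} can be applied.
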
 
	
This theorem implies the explicit double delooping formula of Dwyer-Hess-Turchin:
	\begin{corol}\label{DHT3}Let a simplicial multiplicative operad  $\mathcal{O}$ be such that  $\mathcal{O}_0$ and $\mathcal{O}_1$ are contractible. Then 	there is a weak equivalence of simplicial spaces
	$$\holim_{\Delta}(\mathcal{O}^*) \sim \Omega^2 Map_{\mathrm{NOp}}(u^*Ass,u^*\mathcal{O}),$$
	where $\mathcal{O}^*$ is the cosimplicial object associated to multiplicative operad $\mathcal{O}.$	\end{corol}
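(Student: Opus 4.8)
The plan is to concatenate the two fibration sequences of Theorem \ref{DHT1}, exploiting the hypotheses that $\mathcal{O}_0$ and $\mathcal{O}_1$ are contractible in order to collapse the base spaces, and then to recognise the resulting mapping space as a homotopy totalization through the isomorphism $\mathrm{WBimod}\cong\SSet^{\Delta}$ of Remark \ref{weakbimod}.

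First I would apply the first fibration sequence of Theorem \ref{DHT1} to the multiplicative operad $\mathcal{O}$. Since $\mathcal{O}_1$ is contractible, so is $fib(\mathcal{O}_1)$, and the sequence
$$\Omega Map_{\mathrm{NOp}}(u^*Ass,u^*\mathcal{O})\to Map_{\mathrm{Bimod}}(v^*Ass,v^*\mathcal{O})\to fib(\mathcal{O}_1)$$
produces a weak equivalence $Map_{\mathrm{Bimod}}(v^*Ass,v^*\mathcal{O})\sim\Omega Map_{\mathrm{NOp}}(u^*Ass,u^*\mathcal{O})$, the loop space being taken at the base point given by the multiplicative structure $Ass\to\mathcal{O}$.

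Next I would regard $v^*\mathcal{O}$ as the canonically pointed bimodule determined by the unit map $v^*Ass\to v^*\mathcal{O}$ and feed it into the second fibration sequence of Theorem \ref{DHT1}. Its degree-zero component is $\mathcal{O}_0$, which is contractible, hence $fib(\mathcal{O}_0)$ is contractible and
$$\Omega Map_{\mathrm{Bimod}}(v^*Ass,v^*\mathcal{O})\to Map_{\mathrm{WBimod}}(w^*v^*Ass,w^*v^*\mathcal{O})\to fib(\mathcal{O}_0)$$
yields $Map_{\mathrm{WBimod}}(w^*v^*Ass,w^*v^*\mathcal{O})\sim\Omega Map_{\mathrm{Bimod}}(v^*Ass,v^*\mathcal{O})$. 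Applying the loop functor to the weak equivalence of the previous step (legitimate since all the spaces involved are, up to fibrant replacement, Kan complexes and the base points are compatible, descending from $Ass\to\mathcal{O}$) and composing, I would obtain
$$Map_{\mathrm{WBimod}}(w^*v^*Ass,w^*v^*\mathcal{O})\sim\Omega^2 Map_{\mathrm{NOp}}(u^*Ass,u^*\mathcal{O}).$$

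Finally I would identify the left-hand side with $\holim_{\Delta}(\mathcal{O}^*)$. Under $\mathrm{WBimod}\cong\SSet^{\Delta}$ the terminal weak $Ass$-bimodule $w^*v^*Ass$ goes to the terminal (constant) cosimplicial object, the weak bimodule $w^*v^*\mathcal{O}$ goes to the cosimplicial simplicial set $\mathcal{O}^*$ attached to the multiplicative operad $\mathcal{O}$, and the homotopy mapping space out of the terminal cosimplicial object is by definition the homotopy totalization $\widetilde{Tot}(\mathcal{O}^*)=\holim_{\Delta}(\mathcal{O}^*)$, so the two statements coincide. I expect the only real difficulty to be bookkeeping rather than substance: one must keep careful track of the several base points --- all descending from the distinguished map $Ass\to\mathcal{O}$ --- so that the loop functor may legitimately be applied to the first weak equivalence, and one must check that the weak bimodule structure induced on $\mathcal{O}$ through $w^*v^*$ reproduces exactly the standard cosimplicial object $\mathcal{O}^*$ of a multiplicative operad, with no reindexing.
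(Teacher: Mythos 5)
Your proof is correct and takes exactly the route the paper has in mind: the paper states the corollary without a written proof, but the surrounding text makes clear that it is meant to follow by concatenating the two fibration sequences of Theorem \ref{DHT1} (using contractibility of $\mathcal{O}_1$ and $\mathcal{O}_0$ to collapse the bases) and then identifying $Map_{\mathrm{WBimod}}(1,-)$ with $\holim_\Delta$ via Remark \ref{weakbimod}, as you do.
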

	
\begin{proof}[Proof of Theorem \ref{DHT1}]  Since  $f: Bimod_+\to NOp_{**}$ is homotopically cofinal the result will follow from Theorems \ref{pointedalgebras} and \ref{+point} if we know that 
the squares \ref{squarenop} and \ref{squarebimod} are  homotopically cofinal. This is the content of two lemmas below.

The proof for the second fibration sequence is similar.
\end{proof}




	\begin{lemma}
	The square
	\begin{equation}\label{squarenop}
		\xymatrix{
				NOp \ar[rr] \ar[dd] && NOp_* \ar[dd] \\
				\\
				NOp_* \ar[rr] && NOp_{**}
		}
	\end{equation}
	is homotopically cofinal.
\end{lemma}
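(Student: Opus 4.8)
The plan is to verify directly that the square \eqref{squarenop} satisfies one of the equivalent conditions of Theorem \ref{recognition}, most conveniently condition (2): that $N(NOp_*^{\oint F})$ is contractible, where $F : \Lambda \to \mathbf{PMon}$ is the diagram $NOp_* \leftarrow NOp \rightarrow NOp_*$ over $D = NOp_*$. By Theorem \ref{thomason} (and the equivalence $(2)\Leftrightarrow(3)$ in Theorem \ref{recognition}) this is the same as showing that the natural map $\hocolim_{\Lambda} N(NOp_*^{F(a)}) \to N(NOp_*^{NOp_*})$ is a weak equivalence; since $NOp_*^{NOp_*}$ has a terminal object in each degree its nerve is contractible, so it suffices to show the homotopy pushout $N(NOp_*^{NOp_*}) \leftarrow N(NOp_*^{NOp}) \rightarrow N(NOp_*^{NOp_*})$ is contractible.

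First I would observe that the map $NOp \to NOp_*$ is homotopically cofinal: indeed the classifier $NOp_*^{NOp}$ is, by the general machinery of \cite{BB} (cf.\ the description used in Section 10), the categorical $NOp_*$-algebra of planar trees with one white vertex, where the morphisms contract black edges and replace black vertices by the white one, and one checks exactly as in the proof of Theorem \ref{bullet} (using a smoothness argument à la Lemma \ref{smooth} and Lemma \ref{characterisation}) that each component has contractible nerve. Alternatively, this cofinality is the ``baby case'' already isolated in the introduction and can be extracted from Proposition \ref{f_!(S^S)} together with the fact that adding a point to a $T$-algebra is a homotopy-meaningful operation only through the interval $N(Id_+^{Id_+})$. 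Granting that $NOp \to NOp_*$ is homotopically cofinal, both legs $N(NOp_*^{NOp}) \to N(NOp_*^{NOp_*})$ of the span are weak equivalences between contractible simplicial $NOp_*$-algebras, hence the homotopy pushout is contractible, which is condition (2) of Theorem \ref{recognition} for the square.

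Actually the cleanest route is purely formal: apply Proposition \ref{dddd}(3) (the cube lemma) to the cube whose back face is the constant square on $NOp_*$ — homotopically cofinal by Proposition \ref{dddd}(1) — and whose front face is \eqref{squarenop}, with vertical maps $NOp \to NOp_*$, $NOp_* \xrightarrow{id} NOp_*$, $NOp_* \xrightarrow{id} NOp_*$, $NOp_{**} \xrightarrow{U} NOp_*$. The three ``side'' maps $NOp \to NOp_*$ and the two identities are homotopically cofinal (identities trivially; $NOp\to NOp_*$ by the argument above), so Proposition \ref{dddd}(3) says the front square \eqref{squarenop} is homotopically cofinal if and only if $U : NOp_{**} \to NOp_*$ is homotopically cofinal. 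But $NOp_*^{NOp_{**}}$ is precisely the classifier computing the double point and is contractible: it is the pushout along two copies of the homotopically cofinal $NOp \to NOp_*$, so by Corollary \ref{QuillenA} applied to each pushout leg, $U$ is homotopically cofinal. The main obstacle is making the identification of $NOp_*^{NOp}$ concrete enough to run the smoothness argument — essentially reproducing in this simpler setting the combinatorial heart of Section 10 — but since the trees involved have a single white vertex the poset-of-contractions analysis is strictly easier than the cases already handled in Theorem \ref{bullet}, so I expect no real difficulty beyond bookkeeping.
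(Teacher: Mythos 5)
Your proof contains a fatal error at its foundation: the claim that $u: NOp \to NOp_*$ is homotopically cofinal is false. If it were, then by Corollary \ref{weakequivalencemappingspaces} the map $Map_{\Alg_{NOp}}(1, u^*X) \to Map_{\Alg_{NOp_*}}(1, X)$ would be a weak equivalence for every simplicial pointed operad $X$; since $1$ is both initial and terminal in $\Alg_{NOp_*}$ (and initial objects are cofibrant), the target is contractible, so $Map_{\Alg_{NOp}}(1, u^*X)$ would be contractible for all $X$. This would say that the derived space of multiplicative structures on any operad is always contractible, which would trivialise the entire Dwyer--Hess--Turchin program and, in particular, the delooping $\Omega Map_{\Alg_{NOp}}(1,u^*X) \sim Map_{\Alg_{NOp_{**}}}(1,U^*X)$ from Theorem \ref{pointedalgebras} that this very lemma is meant to support. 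For the same reason $U: NOp_{**} \to NOp_*$ is not homotopically cofinal: $Map_{\Alg_{NOp_{**}}}(1,U^*X)$ is the loop space of $Map_{\Alg_{NOp}}(1,u^*X)$, which need not be contractible. So both the ``direct'' argument and the ``purely formal'' argument collapse. The combinatorial heuristic you offer for the contractibility of $NOp_*^{NOp}$ (trees with one white vertex, smoothness à la Lemma \ref{smooth}) is not what that classifier looks like, and in any case no such smoothness argument can succeed since the classifier is genuinely not contractible.

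There are further structural problems even setting the cofinality claim aside. First, you state condition (2) of Theorem \ref{recognition} with $D = NOp_*$, but for the square \eqref{squarenop} the presheaf $F = (NOp_* \leftarrow NOp \to NOp_*)$ lives over $D = NOp_{**}$ (the bottom-right corner), so the object whose nerve must be shown contractible is $NOp_{**}^{\oint F}$, not $NOp_*^{\oint F}$. Second, the cube you set up for Proposition \ref{dddd}(3) is mis-oriented: the vertical maps in your cube go from \eqref{squarenop} to the constant square on $NOp_*$, which makes \eqref{squarenop} the ``back'' (hypothesis) face and renders the argument circular; reversing the orientation would require a map $NOp_* \to NOp$, which does not exist. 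The paper takes a completely different route: it works directly with $D = NOp_{**}$, describes the objects and generating morphisms of $NOp_{**}^{\oint F}$ in terms of decorated trees (Remark \ref{desc}), compares it with $NOp^{\oint NOp}$ (contractible by Proposition \ref{dddd}(1) and Theorem \ref{recognition}), and exhibits an explicit zigzag of adjunctions through intermediate full subcategories $alt(\cdot)$ that transports contractibility along. No cofinality of single maps such as $NOp \to NOp_*$ is invoked, precisely because no such cofinality holds.
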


\begin{proof}
	According to the Theorem \ref{recognition}, we have to prove that the nerve of $NOp_{**}^{\oint F}$ is contractible, where $F$ is the presheaf of polynomial monads given by the span $$NOp_* \leftarrow NOp \to NOp_*.$$
	
	Using Proposition \ref{dddd} and Theorem \ref{recognition}, the nerve of $NOp^{\oint NOp}$ is contractible, where $NOp$ is the constant presheaf $$NOp \leftarrow NOp \to NOp.$$
	

	It will be enough  to prove that $NOp_{**}^{\oint F}$ and $NOp^{\oint NOp}$ as categories over $\mathbb{N}$ are connected by a sequence of adjoint functors.

	According to the Remark \ref{desc} the objects of the category  $NOp_{**}^{\oint F}$ are given by trees with two types of black vertices $1,2$ and three types of white vertices $0,1,2$, with the condition that no edge can connect two black vertices of the same type.
	
	The morphisms are generated by:
	\begin{itemize}
		\item transformations of a black vertex to a white vertex of the same type or transformations of a white vertex of type $0$ to a white vertex of type $1$ or to a white vertex of type $2$ :
		\[
		\begin{tikzpicture}[scale=0.4]
		\draw (-8,-.75) -- (-8,0);
		\draw (-8,0) -- (-7,1);
		\draw (-8,0) -- (-9,1);

		\draw (-4,-.75) -- (-4,0);
		\draw (-4,0) -- (-3,1);
		\draw (-4,0) -- (-5,1);

		\draw (0,-.75) -- (0,0);
		\draw (0,0) -- (-1,1);
		\draw (0,0) -- (1,1);
		
		\draw (4,-.75) -- (4,0);
		\draw (4,0) -- (3,1);
		\draw (4,0) -- (5,1);
				
		\draw (8,-.75) -- (8,0);
		\draw (8,0) -- (7,1);
		\draw (8,0) -- (9,1);
		
		\draw[fill] (-8,0) circle (4pt) node[left]{$1$};
		\draw[fill=white] (-4,0) circle (4pt) node[left]{$1$};
		\draw[fill=white] (0,0) circle (4pt) node[left]{$0$};
		\draw[fill=white] (4,0) circle (4pt) node[left]{$2$};
		\draw[fill] (8,0) circle (4pt) node[left]{$2$};
		\draw (-6,0) node{$\longrightarrow$};
		\draw (-2,0) node{$\longleftarrow$};
		\draw (2,0) node{$\longrightarrow$};
		\draw (6,0) node{$\longleftarrow$};
		\end{tikzpicture}
		\]
		\item operadic contractions of edges connecting white vertices of the same type
	\end{itemize}
	
	The category $NOp^{\oint NOp}$ is the full subcategory of $NOp_{**}^{\oint F}$ containing trees with only white vertices.
	
	We define $alt\left( NOp^{\oint NOp} \right)$ as the full subcategory of $NOp^{\oint NOp}$ containing trees with the condition that no edge can connect two white vertices of type $1$ or two white vertices of type $2$. Similarly, we define $alt\left( NOp_{**}^{\oint F} \right)$.
	
	Then the inclusion
	\[
		alt\left( NOp^{\oint NOp} \right) \hookrightarrow NOp^{\oint NOp}
	\]
	has a left adjoint that sends a tree to the same tree but where the edges connecting two whites vertices of type $1$ or two white vertices of type $2$.
	
	Similarly, there is an adjunction between $alt\left( NOp_{**}^{\oint F} \right)$ and $NOp_{**}^{\oint F}$.
	
	Finally, the inclusion
	\[
		alt\left( NOp^{\oint NOp} \right) \hookrightarrow alt\left( NOp_{**}^{\oint F} \right)
	\]
	has a left adjoint that sends a tree to the same tree but where the black vertices have been turned into whites vertices of the same type.
	
	In summary, we have a sequence of adjunctions
	\[
		\begin{tikzcd}
		NOp^{\oint NOp} \ar[r,bend left,"",""{name=A, below}] & alt\left( NOp^{\oint NOp} \right) \ar[l,bend left,"",""{name=B,above}] \ar[from=A, to=B, symbol=\dashv] \ar[r,bend left,"",""{name=A, below}] & alt\left( NOp_{**}^{\oint F} \right) \ar[l,bend left,"",""{name=B,above}] \ar[from=A, to=B, symbol=\vdash] \ar[r,bend left,"",""{name=A, below}] & NOp_{**}^{\oint F} \ar[l,bend left,"",""{name=B,above}] \ar[from=A, to=B, symbol=\vdash]
		\end{tikzcd}
	\]
	which concludes the proof.
\end{proof}

	\begin{lemma}
	The square
	\begin{equation}\label{squarebimod}
	\xymatrix{
		Id \ar[rr] \ar[dd] && Bimod \ar[dd] \\
		\\
		Id_+ \ar[rr] && Bimod_+
	}
	\end{equation}
	is homotopically cofinal.
\end{lemma}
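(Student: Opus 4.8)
The plan is to run through the same scheme as in the proof of the previous lemma. By Theorem \ref{recognition} it suffices to prove that the nerve of the classifier $Bimod_+^{\oint F}$ is contractible, where $F\colon\Lambda\to\mathbf{PMon}$ is the presheaf realising the span $Id_+\leftarrow Id\to Bimod$ (so $F(0)=Id$, $F(1)=Bimod$, $F(2)=Id_+$), and $\oint F\to Bimod_+$ is the canonical map to the pushout. As the contractible object to compare against I would use an absolute-type classifier: by Proposition \ref{dddd}(1) together with Theorem \ref{recognition}, the nerve of $Bimod^{\oint\bar F}$ is contractible, where $\bar F$ is the constant presheaf with value $Bimod$; and, alternatively, $N(Bimod^{Bimod})$ is contractible because in each arity the absolute classifier of $Bimod$ has a terminal object, namely the corolla. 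Since all of these classifiers are families of categories indexed by $\mathbb{N}$, it is enough --- exactly as in the previous lemma --- to connect $Bimod_+^{\oint F}$ to one of them, over $\mathbb{N}$, by a finite zigzag of adjoint functors: an adjoint pair induces a weak equivalence of nerves, so such a zigzag forces $N(Bimod_+^{\oint F})$ to be contractible, and Theorem \ref{recognition} then gives that the square (\ref{squarebimod}) is homotopically cofinal.

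To set up the zigzag I would first describe the categories involved from the combinatorial presentation of classifiers of twisted Boardman--Vogt products (\cite{BB}, cf. Remark \ref{desc}). The objects of $Bimod_+^{\oint F}$ are decorated bimodule trees in the sense of Proposition \ref{genrel} whose vertices are labelled in $\{0,1,2\}$: the $1$-labelled vertices carry the full bimodule structure, the $0$-labelled ones (coming from $Id$) are inert, and the $Id_+$-component contributes both inert $2$-labelled vertices and the distinguished nullary operation --- the marked point in degree $1$ --- together with the interval-shaped morphism of the absolute classifier $Id_+^{Id_+}$. This is precisely the role that constants played in the proof of Theorem \ref{bullet} and in Remark \ref{replacement}. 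Imitating that proof, I would then pass to the full subcategories $\mathrm{alt}(-)$ of trees in which no edge joins two vertices of the same label; obtain the reflections onto these subcategories (whose left adjoint performs all admissible same-label contractions); and build successive reflections which relabel every inert $0$- or $2$-labelled vertex to a $1$-labelled bimodule vertex, absorb the marked point by the corresponding replacement morphism in the spirit of Remark \ref{replacement}, and re-reduce --- ending up at a category of plain decorated bimodule trees with bimodule contractions, that is, the absolute classifier $Bimod^{Bimod}$. Composing these gives the required zigzag of adjunctions between $Bimod_+^{\oint F}$ and $Bimod^{Bimod}$.

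The step I expect to be the main obstacle is the construction of the reflections that eliminate the inert vertices and the marked point. One has to check that the prescription ``relabel all inert vertices as bimodule vertices and absorb the constant'' genuinely extends to a functor --- i.e. that it is compatible with every relation of $Bimod_+^{\oint F}$, the full set of bimodule relations of Proposition \ref{genrel} and not only the operadic ones, together with the relations forced by the monad maps $Id\to Bimod$ and $Id\to Id_+$ --- and that its value lies in the reduced subcategory after re-reduction. An extra subtlety that does not appear in the proof of (\ref{squarenop}) is that the apex of the span $F$ is the ``thin'' monad $Id$ rather than $Bimod$, so the comparison must genuinely begin by killing off this inert structure before an absolute classifier comes into view; and, as in the non-reduced part of the proof of Theorem \ref{bullet}, the arity-$0$ and arity-$1$ bimodule operations, together with the fact that the whole construction is fibred over the grading $\mathbb{N}$, call for some additional bookkeeping. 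Once these points are settled the remaining verifications are a routine transcription of the argument for (\ref{squarenop}), and the analogous square needed for the second fibration sequence of Theorem \ref{DHT1} is handled in exactly the same way.
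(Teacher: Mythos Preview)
Your high-level plan --- prove contractibility of $N(Bimod_+^{\oint F})$ by connecting it via a zigzag of adjunctions to $Bimod^{Bimod}$ --- is exactly what the paper does. But the specific adjunctions you propose do not exist, and this is not just a matter of bookkeeping.

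First, the $\mathrm{alt}(-)$ reflection from the $NOp$ lemma is vacuous here. In the span $Id_+\leftarrow Id\to Bimod$ two of the three monads are $Id$ and $Id_+$, which have only unary operations and hence no nontrivial multiplications: there is no ``contract two adjacent same-label vertices'' morphism for those labels. Even for the $Bimod$-label, the generating morphisms in the classifier are bimodule contractions (a white vertex with adjacent \emph{black} vertices), not white--white contractions. So passing to $\mathrm{alt}(-)$ buys nothing.

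Second, and more seriously, you cannot ``relabel every $Id_+$-labelled vertex as a $Bimod$-labelled vertex''. The category $\Lambda$ has arrows $0\to 1$ and $0\to 2$ but no arrow between $1$ and $2$; in the classifier this means there is no morphism taking an $Id_+$-vertex to a $Bimod$-vertex (or vice versa). The only relabellings available go \emph{out of} the $Id$-label. Your proposed left adjoint therefore does not land where you want it to, and cannot be repaired by ``absorbing the marked point'': the constant of $Id_+$ produces a nullary operation with target the $Id_+$-colour, not a morphism towards a $Bimod$-vertex.

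The paper's zigzag is genuinely different in shape. The decisive first step uses the $Id_+$-constant in the \emph{opposite} direction from what you suggest: it \emph{adds} a unary $Id_+$-labelled white vertex on every leaf-to-root path that lacks a white vertex, giving a left adjoint to the inclusion of the full subcategory $W_{012}$ of trees in which each such path meets exactly one white vertex. Only after this normalisation does relabelling become effective: one passes from $W_{012}$ to $W_{02}$ (killing the $Id_+$-label by relabelling it to the $Id$-label, via the arrow $0\to 1$ read as an adjunction the other way) and then from $W_{02}$ to $W_2$ (relabelling $Id$ to $Bimod$ via $0\to 2$), and $W_2$ is identified with $Bimod^{Bimod}$. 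The asymmetry of the span forces this indirect route; a direct imitation of the $NOp_{**}$ argument fails.
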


\begin{proof}
This time we have to prove that the nerve of $Bimod_+^{\oint F}$ is contractible, where $F$ is the presheaf of polynomial monads given by the span
	\[
		Id_+ \leftarrow Id \to Bimod.
	\]
	
As in the previous Lemma we will exhibit a string of adjunctions  connecting the category $Bimod_+^{\oint F}$ (over $\mathbb{N}$) and a subcategory of $Bimod_+^{\oint F}$ whose nerve is contractible.
	
	The objects in $Bimod_+^{\oint F}$ are trees with two types of black vertices $1,2$ and three types of white vertices $0,1,2$. The conditions are
	\begin{itemize}
		\item the white vertices of type $0$ and $1$ can occur only if the vertex has valency 2, that is only one incoming edge
		\item there can be no edge between two black vertices of the same type
		\item any path from a leaf and the root meets possibly a black vertex of type $1$, then possibly a white vertex of any type, then possibly a black vertex of type $2$
	\end{itemize}

	Remark that these conditions imply that if a vertex is black of type $2$, it can only be the root vertex.
	
	The morphisms in $Bimod_+^{\oint F}$ are generated by:
	\begin{itemize}
		\item transformations of a white vertex of type $0$ to a white vertex of type $1$ or to a white vertex of type $2$
		\[
\begin{tikzpicture}[scale=0.4]
\draw (-4,-1) -- (-4,1);

\draw (0,-1) -- (0,1);

\draw (4,-1) -- (4,1);

\draw[fill=white] (-4,0) circle (4pt) node[left]{$1$};
\draw[fill=white] (0,0) circle (4pt) node[left]{$0$};
\draw[fill=white] (4,0) circle (4pt) node[left]{$2$};
\draw (-2,0) node{$\longleftarrow$};
\draw (2,0) node{$\longrightarrow$};
\end{tikzpicture}
\]
		\item addition of an unary white vertex of type $1$ above a black vertex of type $2$ or below a black vertex of type $1$, as long as the tree obtained is still in the set of objects, for example
		\[
		\begin{tikzpicture}[scale=0.4]
		\draw (0,-.75) -- (0,.75);
		\draw (0,0) -- (-1,1);
		\draw (0,0) -- (1,1);
		
		\draw (4,-1.75) -- (4,.75);
		\draw (4,0) -- (3,1);
		\draw (4,0) -- (5,1);
		
		\draw[fill] (0,0) circle (3pt) node[left]{$1$};
		\draw[fill=white] (4,-1) circle (3pt) node[right]{$1$};
		\draw[fill] (4,0) circle (3pt) node[right]{$1$};
		\draw (2,0) node{$\longrightarrow$};
		\end{tikzpicture}
		\]
%
%
		\item bimodules operations when there are black vertices of type $1$ above white vertices of type $2$ or when there are white vertices of type $2$ above a black vertex of type $2$
				\[
		\begin{tikzpicture}[scale=0.4]
		\draw (0,-.5) -- (0,0);
		\draw (0,0) -- (-1.5,1);
		\draw (0,0) -- (0,1);
		\draw (0,0) -- (1.5,1);
		
		\draw (-1.5,1) -- (-2,2);
		\draw (-1.5,1) -- (-1,2);
		
		\draw (0,1) -- (-.5,2);
		\draw (0,1) -- (.5,2);
		
		\draw (1.5,1) -- (.75,2);
		\draw (1.5,1) -- (1.5,2);
		\draw (1.5,1) -- (2.25,2);
		
		\begin{scope}[shift={(8,0)}]
		\draw (0,0) -- (0,-.5);
		\draw (0,0) -- (-2.25,1);
		\draw (0,0) -- (-1.5,1);
		\draw (0,0) -- (-.75,1);
		\draw (0,0) -- (0,1);
		\draw (0,0) -- (.75,1);
		\draw (0,0) -- (1.5,1);
		\draw (0,0) -- (2.25,1);
		\draw[fill=white] (0,0) circle (3pt) node[left]{$2$};
		\end{scope}
		
		\draw[fill=white] (0,0) circle (3pt) node[left]{$2$};
		\draw[fill] (-1.5,1) circle (3pt) node[left]{$1$};
		\draw[fill] (0,1) circle (3pt) node[left]{$1$};
		\draw[fill] (1.5,1) circle (3pt) node[left]{$1$};
		\draw (4,0) node{$\longrightarrow$};
		\end{tikzpicture}
		\]
		or
				\[
		\begin{tikzpicture}[scale=0.4]
		\draw (0,-.5) -- (0,0);
		\draw (0,0) -- (-1.5,1);
		\draw (0,0) -- (0,1);
		\draw (0,0) -- (1.5,1);
		
		\draw (-1.5,1) -- (-2,2);
		\draw (-1.5,1) -- (-1,2);
		
		\draw (0,1) -- (-.5,2);
		\draw (0,1) -- (.5,2);
		
		\draw (1.5,1) -- (.75,2);
		\draw (1.5,1) -- (1.5,2);
		\draw (1.5,1) -- (2.25,2);
		
		\draw[fill] (0,0) circle (3pt) node[left]{$2$};
		\draw[fill=white] (-1.5,1) circle (3pt) node[left]{$2$};
		\draw[fill=white] (0,1) circle (3pt) node[left]{$2$};
		\draw[fill=white] (1.5,1) circle (3pt) node[left]{$2$};
		
		\draw (4,0) node{$\longrightarrow$};
		
		\begin{scope}[shift={(8,0)}]
		\draw (0,0) -- (0,-.5);
		\draw (0,0) -- (-2.25,1);
		\draw (0,0) -- (-1.5,1);
		\draw (0,0) -- (-.75,1);
		\draw (0,0) -- (0,1);
		\draw (0,0) -- (.75,1);
		\draw (0,0) -- (1.5,1);
		\draw (0,0) -- (2.25,1);
		\draw[fill=white] (0,0) circle (3pt) node[left]{$2$};
		\end{scope}
		\end{tikzpicture}
		\]
	\end{itemize}
	
	First, let us define $W_{012}$ as the full subcategory of $Bimod_+^{\oint F}$ containing the trees for which the path from any leaf to the root vertex contains exactly one white vertex.
	
	The inclusion
	\[
		W_{012} \hookrightarrow Bimod_+^{\oint F}
	\]
	has a left adjoint given which sends a tree to same tree but where we add a unary white vertex of type $1$ on all the paths from a leaf to the root vertex which do not contain a white vertex.
	
	Now, we define $W_2$ as the full subcategory of $W_{012}$ containing trees where white vertices are only of type $2$. It is obvious that $W_2$ is isomorphic to $Bimod^{Bimod}$.
	
We then have a sequence of adjunctions
	\[
			\begin{tikzcd}
	W_2 \ar[r,bend left,"",""{name=A, below}] & W_{02} \ar[l,bend left,"",""{name=B,above}] \ar[from=A, to=B, symbol=\vdash] \ar[r,bend left,"",""{name=A, below}] & W_{012} \ar[l,bend left,"",""{name=B,above}] \ar[from=A, to=B, symbol=\dashv]
	\end{tikzcd}
	\]
	where $W_{02}$ is the full subcategory of $W_{012}$ containing trees where white vertices are only of type $0$ or $2$ and the functors $W_{02} \to W_2$ and $W_{012} \to W_{02}$ turn white vertices of type $0$ to white vertices of type $2$ and white vertices of type $1$ to white vertices of type $0$ respectively.
	
	This concludes the proof.
\end{proof}

\noindent {\bf Acknowledgements.}    We  wish to express our  gratitude to
 B. Fresse, A.Lazarev, R. Street,  S. Lack, R. Garner, B. Shoikhet, V. Turchin, for many useful discussions.

The  first author also  gratefully acknowledges  the financial
support  of Max Planck
Institut f\"{u}r Mathematik. 
\renewcommand{\refname}{Bibliography.}

\

\

\end{document}